\documentclass[a4paper,11pt]{amsart}
\usepackage{hyperref}
\usepackage{geometry}
\usepackage{amsmath}
\allowdisplaybreaks
\usepackage{wrapfig}
\usepackage{amsmath,amsthm}

\usepackage{yfonts}
\usepackage[utf8]{inputenc}
\usepackage{amssymb}
\usepackage{graphics}
\usepackage{dirtytalk}
\usepackage{amstext}
\usepackage{subfigure}
\usepackage{engrec}
\usepackage{floatflt}
\usepackage{rotating}
\usepackage[font={footnotesize}]{caption}
\usepackage{framed}
\usepackage{listings}

\usepackage[normalem]{ulem}

\usepackage[titletoc,title]{appendix}

\newcommand{\mc}{\mathcal}
\newcommand{\mb}{\mathbb}

\newcommand{\R}{\mb R}

\newcommand{\N}{\mb N}
\newcommand{\Z}{\mb Z}

\newcommand{\T}{\mb T}

\newcommand{\D}{\mathrm{D}}

\newcommand{\eea}{\end{align}}

\renewcommand{\epsilon}{\varepsilon}
\renewcommand{\bar}{\overline}
\renewcommand{\tilde}{\widetilde}

\newcommand{\bo}{\boldsymbol}
\renewcommand{\phi}{\varphi}

\DeclareMathOperator{\Leb}{Leb}

\DeclareMathOperator{\id}{id}
\DeclareMathOperator{\diam}{diam}

\renewcommand\upsilon{\theta}
 % Hoeffding inequality

%\renewcommand{\thefootnote}{\fnsymbol{footnote}}

\newtheorem{theorem}{Theorem}[section]

\newtheorem{corollary}{Corollary}[section]
\newtheorem{lemma}{Lemma}[section]

\newtheorem{proposition}{Proposition}[section]
\theoremstyle{definition}
\newtheorem{definition}{Definition}[section]

\newenvironment{assumption}[1]
{\assumptionx}
{\endassumptionx}

\newtheorem{remark}{Remark}[section]
\newtheorem{example}{Example}[section]

\newtheoremstyle{algorithm}
{4pt}
{4pt}
{}
{}
{}
{:}
{\newline}
{}

\usepackage{xcolor}

\newtheorem{algorithm}{Algorithm}

\newcommand{\balgorithm}{\begin{algorithm}\begin{framed}\ }
\newcommand{\ealgorithm}{\end{framed}\end{algorithm}}

\newcommand{\bd}{\begin{definition}}
\newcommand{\ed}{\end{definition}}

\newcommand{\bt}{\begin{theorem}}
\newcommand{\et}{\end{theorem}}
\newcommand{\bp}{\begin{proposition}}
\newcommand{\ep}{\end{proposition}}
\newcommand{\bc}{\begin{corollary}}
\newcommand{\ec}{\end{corollary}} 
\newcommand{\bl}{\begin{lemma}}
\newcommand{\el}{\end{lemma}}
\newcommand{\br}{\begin{remark}}
\newcommand{\er}{\end{remark}}

\DeclareMathOperator{\Lip}{Lip}

\usepackage{enumitem}

%%% draftstuff 
%\usepackage{showkeys}
%\usepackage{draftwatermark}
%\SetWatermarkText{Draft: \today}
%\SetWatermarkColor[gray]{0.5}
%\SetWatermarkFontSize{1cm}
%\SetWatermarkAngle{90}
%\SetWatermarkHorCenter{20cm}

%% bibliografia 
%\usepackage[comma, square, numbers]{natbib}

%\usepackage{lineno}
%\linenumbers

\title[Random-like properties of deterministic skew-products]{ Random-like properties of chaotic forcing}
\date{}

\author{Paolo Giulietti}
\address{Paolo Giulietti: Centro di Ricerca Matematica 
Ennio de Giorgi, Scuola Normale Superiore, Piazza dei 
Ca\-valieri 7, 56126 Pisa, Italy.
E-mail: \texttt{paolo.giulietti@sns.it}}
\author{Stefano Marmi}
\address{Stefano Marmi: Scuola Normale Superiore, Piazza dei 
Ca\-valieri 7, 56126 Pisa, Italy \\
E-mail: \texttt{stefano.marmi@sns.it}}
\author{Matteo Tanzi}
\address{ Matteo Tanzi: Courant Institute of Mathematical Sciences, New York University, New York, NY 10012, USA\\
E-mail: \texttt{matteo.tanzi@nyu.edu}}
\thanks{MT acknowledges funding from the H2020 Marie Sk{\l}odowska-Curie Actions  ``Ergodic Theory of Complex Systems" project no. 843880. He is also grateful for the hospitality of  the Centro di Ricerca Matematica Ennio de Giorgi and Scuola Normale Superiore where part of this work was carried out. }

\begin{document}

\maketitle

\begin{abstract}
 We prove that skew systems with a sufficiently expanding base have approximate exponential decay of correlations, meaning that the exponential rate is observed modulo an error. The fiber maps are only assumed to be Lipschitz regular and to depend on the base in a way  that guarantees diffusive behaviour on the vertical component. The assumptions do not imply an hyperbolic picture {and one cannot rely on the spectral properties of the transfer operators involved}. The approximate nature of the result is the inevitable price one pays for having so {mild assumptions on the dynamics on the vertical component}. However, the error in the approximation goes to zero when the expansion {of the base} tends to infinity.  The result can be applied beyond the original setup when combined with acceleration or conjugation arguments, as our examples show.
\end{abstract}

\section{Introduction}

One of the main questions of modern dynamical systems theory is: \emph{to which extent a deterministic chaotic system resembles a  random process?} This question has been addressed in various contexts from different point of views (see  \cite{Young19} for a review). Here we study it in relation to forcing, and in particular we investigate the similarities  between random  and (sufficiently chaotic) deterministic forcing focusing on the statistical properties of the forced system.

A forced system is a system whose intrinsic dynamics is affected by an external  influence  typically coming from  the interaction with another system or the surrounding environment. The forcing can be modelled to be random, e.g. obtained by adding to the dynamics a noise term independent in time, or deterministic, i.e. dependent on a variable  that evolves in time following a deterministic law\footnote{For precise definitions and a comparison between deterministic and random forcing see Section \ref{App:RandSysMarkChains} in the Appendix.}.

In the random case,  classical results from the theory of Markov chains  show that if there is enough diffusion, e.g. if the forcing adds smooth unbounded noise to the dynamics, then the forced system has a stationary measure  that describes its asymptotic statistical behaviour, and exhibits memory loss and annealed exponential decay of correlations (among others \cite{ DownMeynTweedie, BaladiYoung}).  In contrast, if  the forcing is deterministic, it is well known that even just to prove  existence of a physically relevant invariant measure one needs to impose strong assumptions both on the intrinsic dynamic and on the forcing, often leading to some degree of hyperbolicity of the system and/or a good spectral picture of the operators involved (see literature below).     

 In this paper we prove that,  if the forcing  has a ``diffusive effect" and  is generated by a uniformly expanding map with  high expansion,  then the deterministic  system has an \emph{approximate} stationary measure and exhibits \emph{approximate} decay of correlations.  We postpone rigorous definitions to later sections. Loosely speaking, an approximate stationary measure describes the asymptotic statistical properties of the system modulo a controlled error, and by an approximate exponential decay of correlations we mean that measurements of observables along orbits exhibit exponential decay of correlations  also modulo an error. Most importantly, these errors 
 go to zero when the expansion of  
 the map generating the forcing goes to infinity. In other words we could say that, when the expansion of the map generating the forcing goes to infinity, the deterministic forcing becomes indistinguishable from random forcing with respect to the statistical properties we analyze.
 
 {It's important to remark that our requirements do not ensure global hyperbolic properties or a good spectral picture, and even the existence of a {physically relevant} invariant measure cannot be deduced from the assumptions. The price that we pay is the {approximate} nature of the result. Its relevance, however, is clear} when having an eye to applications; {here decorrelation estimates come from observations of real-world systems and are intrinsically affected by a measurement error:  if this error} is {larger} than the approximation error in the decorrelation estimate, exact and approximate decay of correlations are indistinguishable.
 
{Our approach is quite flexible and we expect it to be adaptable to a variety of situations beyond the current working assumptions, for example in situations with lower regularity, or in combination with various conjugations arguments (see Section \ref{Sec:Generalizations} for some generalizations). }
\subsection{Literature review}
 In {mathematical} terms, a forced system in discrete time can be described by a \emph{skew-product} transformation which is a map $F:\Omega\times X\rightarrow \Omega\times X$ such that 
\begin{equation}\label{Eq:DefFunctionF}
F(\omega,x)=(g(\omega),f(\omega,x))
\end{equation}
where $g:\Omega\rightarrow \Omega$ and $f:\Omega\times X\rightarrow X$. The set $\Omega$ is called the \emph{base} of the skew-product, while $X$ is referred to as the \emph{vertical fiber}. The main characteristic of a skew-product is that the evolution on the vertical fiber $X$ depends on the state of the base $\Omega$, but not vice versa.  
 
 The literature on skew-products is vast to the extent that there are entire research trends studying particular aspects of these systems (e.g. iterated function systems, random dynamical systems, smoothness of invariant graphs over skew-products, etc.). Here we focus on those works dealing with statistical properties of skew products that have a ``deterministic" base, such as   \cite{gouezel2007statistical, stenlund2011non, santos2013distributional,GRS, ButterleyEslami,   B18, dragivcevic2018spectral, nicol2018central, walkden2018invariant, hafouta2019, Kloeckner18, dragivcevic2020spectral} and references therein.  These works usually only require $g$ to be a measure preserving ergodic transformation or, at most, to exhibit some uniform hyperbolicity. However, they restrict the fiber map $f$ to one of some particular classes to  ensure contraction or hyperbolic properties (exact or averaged) of the vertical fiber.    In contrast, our results make only mild regularity assumptions on $f$, but require that $g$ is uniformly expanding  with large minimal expansion.
 
 As a consequence of our requirements, the map $F$ is likely to have a dominated splitting of the tangent space and be \emph{partially hyperbolic}  (see e.g.  \cite{Pesin, Sambarino13}) with an expanding direction roughly aligned with the base  dominating the other invariant directions.  To put our work under this perspective, let us remind that available results on existence of physical measures and decay of correlations {for partially hyperbolic systems} often assume low dimensional geometry either of the phase space or of some invariant directions, and/or nonvanishing Lyapunov exponents (\cite{BV00, ABV00, Dima04A, Dima04B, Tsujii05, alves2017srb, tanzi2020nonuniformly}) which, in general, are not granted in our setup.  More recent results give sufficient conditions for partially hyperbolic systems to have exponential decay of correlations by  turning qualitative topological conditions {such as accessibility (\cite{WB2010}), into quantitative properties of the operators involved} (\cite{CastorriniLiverani20, GRH20}). {The systems we consider do not fit in these results due to lack of smoothness,  but it is unclear if the assumptions can be verified even for those systems in our setup which have the required regularity.} 
 
 As the base map is much more chaotic than the vertical fibers, our setup is reminiscent of fast-slow systems (see  \cite{DeSimoiLiverani18, CastorriniLiverani20, CFKM20, KorKoMel} among many others). However,  the dynamic of our skew-products {does not present separation of time-scales since at each time step} it can produce displacements of the same order in both the base system and the vertical fibers.

\subsection{Organization of the paper}
In Section \ref{sec:setting} we present the setting,  the results, some examples, a sketch of the proof. In Section \ref{Sec:NoDistortion} we prove our result in the simpler situation where the map in the base has no distortion and the phase space is 2D. In Section \ref{Sec:Distortion} we prove our main theorem  in full generality. In Section \ref{Sec:Generalizations} we discuss some generalizations. In the appendices we gather some background material and results on Markov chains (in Appendix \ref{App:MarkChainsAndRandom}), disintegration of measures (in Appendix \ref{App:RohlinThm}), and some computations involving the Kantorovich-Wasserstein distance that are used throughout the proofs (in Appendix \ref{App:Wasserstein}).

\section{Setting and Results} \label{sec:setting}

\subsection{ Setting}
Let's consider a map $F$ as in \eqref{Eq:DefFunctionF} where we set $\Omega=\T^{m_1}$ and $X=\T^{m_2}$, here $\T=\R/ \Z$ is the 1D torus and $m_1 , m_2 $ two positive integers. In the following we will denote by $|p_1-p_2|$ the distance between $p_1,p_2\in\T^{N}$ regardless of the specific $N\in\N$. For $I \subseteq \T^{m}$ be a set we denote by $\mbox{Op}(I)$ its open part.

\subsubsection{ The base map $g$} Consider  $g:\T^{m_1}\rightarrow \T^{m_1}$ a  $C^2$ local diffeomorphism. In particular,  there is $d\in\N$ and $\mc I=\{I_i\}_{i=1}^d$ a partition of $\T^{m_1}$ such that: $\mbox{Op}(I_i)=I_i \mod 0$, $\{g_i:=g|_{I_i}\}_{i=1}^d$ with $g_i:I_i\rightarrow \T^{m_1}$ are invertible  branches of $g$, and $g_i|_{\mbox{Op}(I_i)}$ is $C^2$. Call $\{h_i:=g_i^{-1}\}_{i=1}^d$  the corresponding inverses.  

We assume that $g$ satisfies the following assumptions:
\begin{equation}  \label{As:0.1}
\exists \sigma>1\,\,\,\mbox{s.t. } \|   \D g_\omega v\| \ge \sigma \|v\|\quad\quad\forall \omega\in\T^{m_1},\,v\in\R^{m_1}, 
\tag{H0.1} \end{equation}
where $\|\cdot\|$ is the Euclidean norm on $\R^{m_1}$, and
\begin{equation}  \label{As:0.2}
\exists D>0\,\,\, \mbox{s.t. }\frac{| \D g_{h_i(\omega_1)}|}{|\D g_{h_i(\omega_2)}|}\le e^{D|\omega_1-\omega_2|} \quad\quad\forall \omega_1,\omega_2\in \T^{m_1}\mbox{ and }\forall i. 
\tag{H0.2} \end{equation}
where $| \D g_{h_i(\omega_1)}|$ denotes the determinant of $\D g_{h_i(\omega_1)}$. Condition \eqref{As:0.1} states that the differential of $g$ expands vectors in tangent space of  a factor at least $\sigma>1$, while \eqref{As:0.2} imposes a uniform bound on the distortion.  It is well known that  $g$ has a unique absolutely continuous invariant probability (a.c.i.p.) measure (see \cite{BG97},\cite{SDDSViana} and references therein). We call $\nu_g$ this measure and $\rho_g:=\frac{d\nu_g}{d\Leb_{\T^{m_1}}}$ its density, where $\Leb_{\T^{m_1}}$ is the Lebesgue measure on $\T^{m_1}$.

 \subsubsection{ The vertical fiber maps $f$}We assume  $f:\T^{m_1}\times \T^{m_2}\rightarrow \T^{m_2}$ to be at least Lipschitz, and denote by $L\ge 0$ the Lipschitz constant, namely
 \begin{equation}\label{As:0.3}
 L:=\inf_{(\omega_1,x_1)\neq(\omega_2,x_2) } \frac{|f(\omega_1,x_1)-f(\omega_2,x_2)|}{ |(\omega_1,x_1)-(\omega_2,x_2)|}.\tag{H0.3}
 \end{equation}
 
 Let $\{f_\omega\}_{\omega\in\T^{m_1}}$ be the collection of maps $f_\omega:\mb T^{m_2}\rightarrow\T^{m_2}$ i.e.  $f_\omega(\cdot):=f(\omega,\cdot)$.  We  write $f(\cdot,x)$ for the maps $f(\cdot,x):\T^{m_1}\rightarrow \T^{m_2}$ obtained by fixing $x\in\T^{m_2}$ and letting $\omega\in\T^{m_1}$ vary. We let $\pi_1:\mb T^{m_1}\times \mb T^{m_2}\rightarrow \T^{m_1}$ be the projection onto the \emph{horizontal} $\T^{m_1}$-coordinate  and, given a measure $\mu$ on $\mb T^{m_1}\times \T^{m_2}$,  we refer to $\pi_{1*}\mu$ as the \emph{horizontal marginal} of $\mu$. We also denote by $\pi_2:\mb T^{m_1}\times \mb T^{m_2}\rightarrow \T^{m_2}$ the projection onto the \emph{vertical} $\T^{m_2}$-coordinate and refer to $\Pi\mu:={\pi_{2*}}\mu$ as the \emph{vertical marginal} of the measure $\mu$.

\subsubsection{$\mc P$, the random counterpart of $F$}In the following,  $\mc M_1(Y)$ denotes the space of Borel probability measures on the compact metric space $Y$. 

For $\mu\in\mc M_1(\T^{m_2})$, define the push-forward $f_{\omega*}\mu(A) = \mu( f_\omega^{-1}(A)) $ for any measurable $ A \subseteq \T^{m_2}$,  and define the operator  $\mc P:\mc{M}_1(\T^{m_2})\rightarrow \mc{M}_1(\T^{m_2})$
\begin{equation}\label{Def:OperatorP}
\mc P \mu:=\int_{\T^{m_1}}  d\nu_g(\omega)f_{\omega*}\mu\,=\int_{\T^{m_1}}d\omega \rho_g(\omega)f_{\omega*}\mu.
\end{equation}
Notice that  $\mc P$ is the generator for a \emph{discrete time stationary Markov process} with transition kernel
\[
P(x,A):=\int_{\T^{m_1}}{\delta}_{f_{\omega}(x)}(A)\rho_g(\omega)d\omega.
\]  
where $\delta_{f_\omega(x)}$ denotes the Dirac mass at $f_\omega(x)$. These operators are well studied in  the literature and sufficient conditions under which $\mc P$ has a spectral gap in various functional spaces are known {(see e.g. \cite{HM08, Sh95, Stroock14} and Appendix \ref{App:MarkChainsAndRandom})}.

It is important to notice that if at each time step one was to apply a map  $\{f_{\omega}\}_{\omega\in\T^{m_1}}$  sampled \emph{independently} with respect to $\nu_g$, then the operator $\mc P$ would describe the evolution of the vertical marginal. In other terms, one can think of the Markov chain generated by $\mc P$ as the \say{random counterpart} of  the deterministic evolution given by  $F$ which instead {selects} the map $f_\omega$ at each time-step  according to the deterministic process  $\omega, \,g(\omega),\, g^2(\omega),...$ generated by $g$.

\subsection{Main Assumption}
 {Assumption \ref{As:1} below requires that the Markov chain generated by $\mc P$ is geometrically ergodic   with respect to the Total Variation (TV) distance (see Appendix \ref{App:MarkChainsAndRandom} for definitions). }

\begin{assumption}{H}  \label{As:1}

There are $C>0$, $\lambda\in(0,1)$ such that 
\[
 d_{TV}(\mc P^{n}\mu,\mc P^{n} \nu)\le C\lambda^nd_{TV}(\mu,\nu),
\]
for all $\mu,\,\nu\in\mc M_1(\T^{m_2})$.
\end{assumption}

Notice that by a Krylov-Bogolyubov argument, it follows that there is a unique $\eta_0\in\mc M_1(\T^{m_2})$ invariant under $\mc P$, i.e. such that  $\mc P\eta_0=\eta_0$, which is called a \emph{stationary measure} for the Markov process generated by $\mc P$. Also notice that Assumption \ref{As:1} is a condition on $\mc P$, and therefore it {depends  on   $f:\T^{m_1}\times\T^{m_2}\rightarrow \T^{m_2}$ and 
 $\nu_g$ only.}

\subsection{Main Result}

When describing the statistical properties of a skew-product such as $F$, we adopt the following point of view. We assume to have access to observations  of measurable functions $\phi:\T^{m_2}\rightarrow \R$ along the orbits of the system.   Picking as reference measure on $\T^{m_1}\times \T^{m_2}$  the Lebesgue measure $\Leb_{\T^{m_1}\times\T^{m_2}}$ gives rise to the sequence of dependent random variables
\[
\left\{\phi\circ \pi_2 \circ F^n \right\}_{n=1}^{+\infty}
\] 
on $(\T^{m_1}\times \T^{m_2},\Leb_{\T^{m_1}\times\T^{m_2}})$. 

For $\phi,\psi:\T^{m_2}\rightarrow \R$ in suitable functional spaces, we ask if there are constants $A\in\R$ and $ \tilde \lambda\in (0,1)$ such that 
\begin{equation}\label{Eq:AnnealedDecay}
\left |\int_{\T^{m_1}\times\T^{m_2}}\phi( \pi_2F^n(\omega,x)) \psi(x)\,\,d\omega dx-A\right|=\mc O( \tilde \lambda^n)
\end{equation}
When \eqref{Eq:AnnealedDecay} is satisfied, the system is said to have exponential annealed decay of correlations. The term \emph{annealed} refers to the fact that the observables $\phi,\psi$ depend on the vertical $\T^{m_2}$-coordinate only, and therefore the correlations are averaged with respect to the horizontal $\T^{m_1}$-coordinate.

 As already argued in the introduction, our systems have little hope to satisfy \eqref{Eq:AnnealedDecay},  but the following theorem shows that $F$ exhibits exponential annealed decay of correlations, up to a given precision that depends on the expansion of the base system. 

\begin{theorem}\label{Thm:AppDecayofCorr}
Let $F$ satisfy assumptions \eqref{As:0.1}-\eqref{As:0.3} and Assumption \eqref{As:1} with datum $m_1$, $m_2\in\N$, $D,\,L,\, C>0$, $\sigma > 1$, $ \lambda\in(0,1)$. For every   $\epsilon>0$ there is  $\sigma_0 > \max\{1,L\} $ (depending on $\epsilon$ and all the datum but $\sigma$) such that  if  $\sigma > \sigma_0 $, then  there are $\tilde\lambda\in(0,1)$, $\tilde C>0$  and a probability measure ${\bar \eta}\in\mc M_1(\T^{m_2})$ such that 
\begin{align*}
&\left|\int_{\T^{m_1}\times\T^{m_2}}\phi( \pi_2F^n(\omega,x)) \psi(x)\,\,d\omega dx-\int_{\T^{m_2}} \phi(x)d{\bar \eta}(x)\int_{\T^{m_2}}\psi(x)dx\right|\leq C_{\phi,\psi}(\tilde C \tilde{\lambda}^n+ \epsilon)
\end{align*}
for all $\psi \in L^1(\T^{m_2};\R)$ and $\phi\in\Lip(\T^{m_2};\R)$ where $C_{\phi,\psi}>0$ depends on $\phi,\psi$ but not from $n, \epsilon$. 

\end{theorem}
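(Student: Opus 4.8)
The plan is to compare the deterministic evolution of $\phi\circ\pi_2\circ F^n$ against the random (annealed) Markov evolution generated by $\mathcal{P}$, for which Assumption \eqref{As:1} already gives exponential decay, and to estimate the discrepancy in terms of the expansion $\sigma$. Concretely, write the correlation integral as $\int \phi(\pi_2 F^n(\omega,x))\psi(x)\,d\omega\,dx = \int_{\T^{m_2}}\phi\, d(\Pi F^n_*\mu_\psi)$ where $\mu_\psi$ is the (signed, but we may reduce to positive) measure with density $\psi(x)$ with respect to $\Leb_{\T^{m_1}\times\T^{m_2}}$, first normalized so that we are pushing forward a probability measure. The key object is the \emph{vertical marginal} of the deterministically pushed-forward measure, $\Pi F^n_*\mu$, versus $\mathcal{P}^n(\Pi\mu)$. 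If these are close in Wasserstein (or TV) distance uniformly in $n$, up to an error $\epsilon(\sigma)\to 0$ as $\sigma\to\infty$, then combining with Assumption \eqref{As:1} applied to $\mathcal{P}^{\lfloor n/2\rfloor}$ and the triangle inequality yields the claimed bound with $\bar\eta$ being (a small perturbation of) the stationary measure $\eta_0$, and $C_{\phi,\psi} = \|\phi\|_{\Lip}\cdot(\text{const})\cdot\|\psi\|_{L^1}$.

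First I would set up the comparison after one step. Pushing $\mu$ forward by $F$ and then projecting gives $\Pi F_*\mu = \int_{\T^{m_1}} f_{\omega*}(\mu_\omega)\,d(\pi_{1*}\mu)(\omega)$ where $\{\mu_\omega\}$ is the disintegration of $\mu$ over its horizontal marginal (Appendix \ref{App:RohlinThm}); whereas $\mathcal{P}(\Pi\mu) = \int f_{\omega*}(\Pi\mu)\,d\nu_g(\omega)$. The two differ for two reasons: (i) the base marginal $\pi_{1*}(F^n_*\mu)$ is not yet $\nu_g$ but only converges to it — here assumptions \eqref{As:0.1}--\eqref{As:0.2} give exponential convergence of the base transfer operator, with rate governed by $\sigma$; and (ii) more seriously, the disintegrated fiber measures $\mu_\omega$ are correlated with $\omega$ rather than being the single measure $\Pi\mu$. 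The crucial point is that after $n$ steps, the preimage branches of $g^n$ partition $\T^{m_1}$ into pieces of diameter $\lesssim \sigma^{-n}$, so on each such piece the maps $f_\omega$ vary by at most $L\sigma^{-n}$ in the sup metric (by \eqref{As:0.3}); this is exactly the mechanism by which large $\sigma$ makes the deterministic selection of fiber maps look like an i.i.d.\ selection. I would quantify this via a telescoping/coupling argument over the $n$ steps: at each step the ``error of randomization'' introduced is of size $O(L/\sigma)$ (the displacement mismatch on a single branch) plus the already-accumulated error, which is damped by the contraction coming from Assumption \eqref{As:1}. Summing the geometric series gives a total error bounded by a constant times $L/\sigma$, which can be made $<\epsilon$ by choosing $\sigma_0$ large.

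The main obstacle I anticipate is step (ii) done \emph{uniformly in $n$}: the naive telescoping error $\sum_{k=0}^{n-1}(\text{rate})^k \cdot (L/\sigma)$ converges, but one must verify that the ``rate'' at which old errors are forgotten is genuinely $<1$ and does \emph{not} degrade as the fiber maps are only Lipschitz (no contraction on the fiber is assumed). The resolution must route entirely through the annealed decay in Assumption \eqref{As:1}: the error measures produced at intermediate steps have zero... total mass if one is careful to compare probability measures, so they get contracted by $C\lambda^{n-k}$ in TV when subsequently evolved by $\mathcal{P}$. Making this rigorous requires controlling the horizontal marginal of $F^k_*\mu$ and its disintegration simultaneously — this is precisely what Sections \ref{Sec:NoDistortion} and \ref{Sec:Distortion} are organized to do, treating first the distortion-free 2D case and then the general case. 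A secondary technical point is handling $\phi\in\Lip$ rather than bounded-variation: Lipschitz regularity of $\phi$ lets us pass from TV-closeness of the randomized vs.\ deterministic vertical marginals where available, to Wasserstein-closeness where the $\sigma^{-n}$ branch-diameter estimate naturally lives, using the standard duality (Appendix \ref{App:Wasserstein}); the constant $C_{\phi,\psi}$ then picks up $\|\phi\|_{\Lip}$.
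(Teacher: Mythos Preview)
Your overall strategy---telescoping $\Pi F_*^n\mu$ against $\mc P^n(\Pi\mu)$ and controlling the remainder---matches the paper's Section~\ref{Sec:NoDistortion}. However, the mechanism you propose for keeping the accumulated error bounded has a genuine gap.

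You write that the per-step errors ``get contracted by $C\lambda^{n-k}$ in TV when subsequently evolved by $\mc P$.'' The telescoping indeed gives $\Pi F_*^n\mu-\mc P^n(\Pi\mu)=\sum_{k=0}^{n-1}\mc P^{n-1-k}\delta_k$ with $\delta_k=\int f_{\omega*}\bigl((F_*^k\mu)_\omega-\Pi F_*^k\mu\bigr)\,d\omega$. But $\delta_k$ is small only in \emph{Wasserstein} distance (of order $\ell_0\sim L/\sigma$, via the disintegration bound), whereas its \emph{total variation} norm can be of order one. Assumption~\ref{As:1} contracts $d_{TV}$, not $d_W$, so $\|\mc P^{n-1-k}\delta_k\|_{TV}\le C\lambda^{n-1-k}\|\delta_k\|_{TV}$ yields a sum that is $O(1)$, not $O(\epsilon)$. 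Conversely, in $d_W$ one only has $\|\mc P^{n-1-k}\delta_k\|_W\le L^{n-1-k}\|\delta_k\|_W$, which may grow when $L>1$. There is no geometric series with ratio $<1$ here.

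The paper's resolution is different and rests on two ingredients you underemphasise. First, the class $\Gamma_{\ell_0}$ of measures with Lipschitz disintegration of constant $\le\ell_0$ is \emph{invariant} under $F_*$ (Proposition~\ref{Prop:LipNodist}, Corollary~\ref{Cor:InvariantClasses}); this is what makes $\|\delta_k\|_W\le 2\ell_0$ hold \emph{uniformly in $k$}. Second, one does not sum over all $n$: one first fixes $n_0$ with $C\lambda^{n_0}\le\epsilon/2$, and only \emph{then} chooses $\sigma_0$ so large that the \emph{finite} sum $2\ell_0\sum_{i=0}^{n_0-1}L^i\le\epsilon/2$. For $n>n_0$ one uses the invariance of $\Gamma_{\ell_0}$ to restart the $n_0$-step estimate from $F_*^{n-n_0}\mu_1,F_*^{n-n_0}\mu_2\in\Gamma_{\ell_0}$. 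This is the content of Proposition~\ref{Prop:ContractionMarginalWassersteinDist}; the order of quantifiers (``$n_0$ first, $\sigma_0$ after'') is essential and is not the damping-of-old-errors picture you describe.

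A second issue: you take $\bar\eta$ to be (a small perturbation of) the stationary measure $\eta_0$ of $\mc P$. This is correct only in the distortion-free case. In general the paper shows (Section~\ref{sec:difffixedpooint}) that $\bar\eta$ can be far from $\eta_0$; instead $\bar\eta=\sum_i\bar\rho_i(\bo\eta_0)_i$ where $\bo\eta_0$ is the fixed point of an auxiliary operator $\bo{\mc L}$ on $(\mc M_1(\T^{m_2}))^d$ built from a coarse-graining of the disintegration (see \eqref{Eq:OpL}, Propositions~\ref{Prop:ApproximDisitnMathcalL} and~\ref{Prop:SpecPropMCL}). Comparing with $\mc P$ alone is insufficient when $D>0$.
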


In fact, we will prove something stronger. Loosely speaking, we show that under the assumptions of Theorem \ref{Thm:AppDecayofCorr}, for any $\mu\in\mc M_1(\T^{m_1}\times\T^{m_2})$ which is sufficiently regular, in a sense that will be specified  below,  the distance\footnote{Here the distance is with respect to the Wasserstein-Kantorovich metric defined in equation \eqref{Eq:KantWassDist} below.} between the vertical marginal of $F_*^n\mu$ and $\bar \eta$ can be upper bounded by $O(\tilde \lambda^n+\epsilon)$ (see Proposition \ref{Prop:KantDistanceDistCase} for a rigorous statement). We call this phenomenon \emph{approximate memory loss}. For a  definition and an example of (exact) memory loss see e.g. \cite{OSY09}.

{The measure ${\bar\eta}$ above plays the role of  an \emph{approximate stationary measure} for the forced system}.  In the case with no distortion, e.g. $g(\omega)=\sigma\omega$ mod 1 with $\sigma\ge 2$, ${\bar\eta}$ equals $\eta_0$, the {stationary measure} of $\mc P$. As shown in Section \ref{sec:difffixedpooint}, when there is distortion,  ${\bar\eta}$ can be different from $\eta_0$, and is related to the fixed point of another operator, called $	\bo{\mc L}$, introduced in Section \ref{Sec:TrackingDistVertMarg}.

\begin{remark} \noindent
\begin{itemize}
\item Given $D$ and $L$, one might need a large minimal expansion $\sigma_0$ to ensure that $\epsilon>0$ is small. Examples of base maps $g$ with given distortion, and arbitrarily large minimal expansions $\sigma_0$ can be constructed easily by fixing any map $g_0:\T^{m_1}\rightarrow \T^{m_1}$ satisfying \eqref{As:0.1}-\eqref{As:0.2}, and considering $g:=g_0^n$ with high $n\in\N$.  With this definition, $g$ has minimal expansion equal to the minimal expansion of $g$ raised to the power $n\in\N$, and distortion  uniformly bounded with respect to $n$.
 \item Existence of an invariant measure which is physical  or with some smoothness such as an SRB measure (see \cite{Young02} for definitions)  has little hope in general. {One reason is the low regularity of $F$ which is only Lipschitz. However,} imposing higher regularity, e.g. $F$ globally $C^{1 + \alpha}$, would not be enough {as the domination that (possibly) results from the high expansion in the base, even if it can lead to existence of positive Lyapunov exponents, cannot ensure existence of an SRB or physical measure by itself, and all the more reasons not to expect exact exponential decay of correlations. }
\item {We can give an explicit bound for the constant $C_{\phi,\psi}$}. Letting $\psi-\int_{\T^{m_2}}\psi=\psi_1-\psi_2$ with $\psi_1,\psi_2\ge 0$ being the positive and negative components of $\psi-\int_{\T^{m_2}}\psi$,
\[
C_{\phi,\psi}\le  2\|\psi\|_{L^1}(\Lip(\phi)+1).
\]
\item As mentioned in the introduction, whenever one has additional information on the fiber maps $\{f_\omega\}_{\omega\in\T}$, other approaches could lead to more precise statements. 
\end{itemize}
\end{remark}

\subsection{Examples}\label{Sec:Examples}
One way to ensure that Assumption \ref{As:1} holds is by imposing  two main regularity requirements on $f$ with respect to the horizontal variable $\omega$, i.e. with respect to the forcing: 1) \emph{Regularity condition:} $f$ is $C^k$ in the variable $\omega$ for a sufficiently large $k$. 2) \emph{Non-degeneracy condition:} the differential of $f$ with respect to $\omega$ is invertible which, for every $x\in\T^{m_2}$, makes the function $f(\cdot, x):\T^{m_1}\rightarrow \T^{m_2}$ a local diffeomorphism  on its range (notice that for this requirement to hold $m_1$ has to be equal to $m_2$).

\begin{example} \label{Ex:assexp}
Let's consider $m_1=m_2=m$, and assume   that  for any $x\in \T^{m}$ $f(\cdot,x):\T^{m}\rightarrow \T^m$ is a $C^2$ local diffeomorphism {or, equivalently, $\{f_\omega\}_{\omega\in\T^m}$ is a family of maps with $C^2$ dependence on the parameter $\omega$  such that the differential $(\D f(\cdot,x))_\omega$ is bijective for every $x,\omega\in\T^{m}$. }
%Then, we show that Assumption \ref{As:1} is satisfied.

From Eq. \eqref{Def:OperatorP} one can deduce that 
\[
\mc P \delta_x=f(\cdot,x)_*\nu_g
\]
and  since $f(\cdot,x)$ is  a non-singular transformation, the expression of its Perron-Frobenius operator gives
\[
%\zeta(x,y):=
\frac{d\mc P\delta_x}{d\Leb_{\T^{m}}}(y)=\sum_{k}\frac{\rho_g(y_k)}{(| \D f(\cdot,x)|)_{y_k}}
\]
where the sum is over all the preimages $y_k$ of $y$ under the map $f(\cdot,x)$. $\frac{d\mc P\delta_x}{d\Leb_{\T^{m}}}$ is in $C^1$ since $| \D f(\cdot,x)|$ and $\rho_g$ are $C^1$ functions. It is also uniformly bounded away from zero, as there is $c_1>0$ such that $\rho_g>c_1$ (see e.g. \cite{SDDSViana}), and there is $K_1>0$ such that   $|(\D f(\cdot,x))_\omega|\le K_1$ for every $\omega,x\in \T^m$.  This implies that for every $x\in\T^{m}$, $\frac{d\mc P\delta_x}{d\Leb_{\T^{m}}}(y)>cK_1$, i.e.  the densities of the transition probabilities are all uniformly bounded away from zero. It is well known that the Markov chain generated by $\mc P$ is geometrically ergodic, i.e. satisfies Assumption \eqref{As:1} (see  Theorem \ref{Thm:GeometricErgodicity} in the Appendix).
\end{example}

The following  example is a subcase of the example above and  shows one of the simplest possible nontrivial setups.

\begin{example}[System with additive deterministic noise]
$f(\omega,x) = T(x) + h(\omega)$, where $T:\T^m\rightarrow\T^m$ is any Lipschitz map, and $h:\T^m\rightarrow\T^m$ is a local $C^2$ diffeomorphism. 
\end{example}
Let us  stress that these sufficient conditions for Assumption \ref{As:1}: 1) are by no means necessary;  2) give no control on a single fiber map $f_\omega$ beyond the requirement that it is Lipschitz regular;  3) do not imply  good spectral properties for $F_*$.   

\subsection{Sketch of the proof}  

Given two probability measures $\mu_1,\mu_2$ on $\T^{m_2}$, the Kantorovich-Wasserstein distance between them is defined as
\begin{equation}\label{Eq:KantWassDist}
d_W(\mu_1,\mu_2):=\inf_{\gamma\in\mc C(\mu_1,\mu_2)}\int_{\T^{m_2}\times \T^{m_2}}\mathtt |x-y|d\gamma(x,y)
\end{equation}
where $\mc C(\mu_1,\mu_2)$ is the set of couplings of $\mu_1$ and $\mu_2$, i.e. the set of all probability measures on $\T^{m_2}\times \T^{m_2}$ with marginals $\mu_1$ and $\mu_2$ respectively on the first and second factor. 

The class of measures defined below plays a central role in the  proof of Theorem \ref{Thm:AppDecayofCorr}.
\begin{definition} \label{Def:HolderDisint}
Given a probability measure $\mu$ on $\T^{m_1}\times \T^{m_2}$, we say that $\mu$ has \emph{Lipschitz disintegration along vertical fibres}, or simply \emph{Lipschitz disintegration}, if there is a disintegration $\{\mu_\omega\}_{\omega\in \T^{m_1}}$  of $\mu$, with respect to the measurable partition $\{\{\omega\}\times \T^{m_2}\}_{\omega\in\T^{m_1}}$, such that the map $\omega\mapsto \mu_\omega$ from $(\T^{m_1},|\cdot|)$ to $(\mc M_1(\T^{m_2}),  d_W)$ is Lipschitz.  Let
\[
\Lip(\mu):=\inf_{\omega_1\neq\omega_2}\frac{d_W(\mu_{\omega_1},\mu_{\omega_2})}{|\omega_1-\omega_2|}
\]
the Lispchitz constant of  $\omega\mapsto \mu_\omega$ 
\end{definition}
 Since $\mc M_1(\T)$ with the metric $d_W$ is complete, if $\mu$ admits a Lipschitz disintegration, this disintegration is unique and $\Lip(\mu)$ is well defined. \footnote{In Appendix \ref{App:RohlinThm} we gather statements, such as the above, about disintegration of measures that will be used throughout the paper. }

To prove  {Theorem \ref{Thm:AppDecayofCorr}},  we are going to study the evolution of probability measures on $\T^{m_1}\times\T^{m_2}$ that have a Lipschitz disintegration {with a focus on the evolution of their vertical marginals}. To do so we follow the steps below.
\begin{itemize} 
\item[1)] First of all we show that under the assumptions of the main theorem,  if $\mu\in\mc M_1(\T^{m_1}\times \T^{m_2})$ has Lipschitz disintegration, so does $F^n_*\mu$ for any $n\in\N$, and $\Lip(F_*^n\mu)$ is bounded uniformly in $n\in\N$ (see Proposition \ref{Prop:ControlHDist}). This is a consequence of the uniform (high) expansion of the map $g$. This result shows the existence of an invariant class of measures whose disintegration is smooth along the {$\T^{m_1}$-coordinate, and  is proved by} using an explicit expression for a disintegration of $F_*\mu$ in terms of a disintegration of $\mu$ (see Proposition \ref{Prop:FormDisint}).
\item[2)] Next, we use the above fact to show that the  vertical marginal  of $F^n_*\mu$ can be approximated by looking at the action of an auxiliary operator, $\bo{\mc L}$, that acts on a suitable decomposition of $\mu$, and that, unlike $F_*$, has {contraction properties that can be exploited} (see \eqref{Eq:OpL} for the definition of $\bo{\mc L}$,  Proposition \ref{Prop:ApproximDisitnMathcalL}, and Proposition \ref{Prop:SpecPropMCL}). 
\item[3)] The above approximation allows to show that under application of $F_*$, the system exhibits  \emph{approximate exponential memory loss} on its vertical marginal. By this we mean that given any two probability measures $\mu_1,\mu_2\in\mc M_1(\T^{m_1}\times \T^{m_2})$ with Lipschitz disintegration, the Kantorovich-Wasserstein distance between vertical marginals $\Pi F_*^n\mu_1$ and $\Pi F_*^n\mu_2$ shrinks exponentially fast modulo an approximation error (see Proposition \ref{Prop:KantDistanceDistCase}).
\item[4)]Finally, we use the above approximate memory loss to prove the approximate decay of correlations. 
\end{itemize}

In Section \ref{Sec:NoDistortion}, we  give a proof of Theorem \ref{Thm:AppDecayofCorr} in the simpler case where: $m_1=m_2=1$;  the dynamics in the base is smooth and has no distortion, i.e. $D=0$. Under these assumptions $g:\T\rightarrow\T$ can be written as
\begin{equation} \label{As:0plus}
g(\omega)=\sigma\omega\mod 1, \quad\quad \sigma\in\N\backslash\{1\}. \tag{A0+}  
\end{equation}
This is for the sake of presentation since in this case the treatment of points 2) and 3) does not require the introduction of the auxiliary operator $\mc L$, whose role is played by $\mc P$. This makes the  proof much easier than in the fully general case.  

\begin{remark}
Picking a metric on $\mc M_1(\T^{m_2})$ as weak as the Wasserstein metric $d_W$ is crucial to our arguments. Without further assumptions on $\{f_\omega\}_{\omega\in\Omega}$, measures with  Lipschitz disintegration with respect to the $d_{TV}$ distance, for example, may not constitute an invariant class {with respect to the action of $F_*$}.  
\end{remark}

%\begin{figure}[h!]
%\begin{center}
%\includegraphics[scale=0.6]{epsilonmaxav}
%\end{center}
%\label{Fig:epsilonmaxav}
%\caption{}
%\end{figure}

%\subsection{Numerical Examples}
%{Here we present some results of numerical simulations supporting the claim of Theorem \ref{Thm:AppDecayofCorr}. These numerical simulations have an illustrative purpose only and a rigorous numerical analysis is beyond the scope of this paper. 
%
% }

\section{Case without distortion} \label{Sec:NoDistortion}
In this section we work under Assumption \ref{As:0plus}. Namely we consider  $g:\T\rightarrow \T$ defined as $g(\omega)=\sigma \omega \mod 1 $, where $\sigma \in\N\backslash\{1\}$. Recall that under these assumptions $\nu_g=\Leb_\T$ and  $\rho_g$ is constant equal to one. Take $\mu\in\mc M_1(\T\times \T)$ having  horizontal marginal $\Leb_\T$.  We study the evolution of $\mu$ under applications of  $F_*$. First of all notice that as a consequence of the product structure of $F$ and invariance of $\Leb_\T$ under $g$, also $F_*\mu$ has horizontal marginal equal to $\Leb_\T$. The evolution of the disintegration along vertical fibres, instead is described in the following proposition.
\begin{proposition}
Let $\mu$ be a probability measure on $\T\times \T$ with horizontal marginal equal to $\Leb_{\T}$. Let $\{\mu_\omega\}_{\omega\in\T}$ be a disintegration of $\mu$ along  vertical fibres, then a disintegration of $F_*\mu$ along  vertical fibres is given by $\{(F_*\mu)_\omega\}_{\omega\in\T}$ with
\begin{equation}\label{Eq:FormDisintNodist}
(F_*\mu)_{\omega}=\frac{1}{\sigma}\sum_{i=0}^{\sigma-1}f_{\frac{\omega+i}{\sigma}*}\mu_{\frac{\omega+i}{\sigma}}.
\end{equation}
\end{proposition}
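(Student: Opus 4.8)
The plan is to verify the formula against the characterisation of disintegrations recalled in Appendix \ref{App:RohlinThm}. Since $g_*\Leb_\T=\Leb_\T$ and $F$ has the skew form \eqref{Eq:DefFunctionF}, the horizontal marginal of $F_*\mu$ is again $\Leb_\T$, as already observed; so it is enough to show that the candidate family
\[
\nu_\omega:=\frac1\sigma\sum_{i=0}^{\sigma-1}f_{\frac{\omega+i}{\sigma}*}\,\mu_{\frac{\omega+i}{\sigma}}
\]
(i) depends measurably on $\omega$ and is a Borel probability measure on $\T$ for $\Leb_\T$-a.e.\ $\omega$, and (ii) satisfies the defining identity
\[
\int_{\T\times\T}\varphi\,dF_*\mu=\int_\T\Big(\int_\T\varphi(\omega,y)\,d\nu_\omega(y)\Big)\,d\omega
\]
for every bounded measurable $\varphi:\T\times\T\to\R$. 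Uniqueness of the disintegration then forces $(F_*\mu)_\omega=\nu_\omega$ for $\Leb_\T$-a.e.\ $\omega$. It actually suffices to check (ii) for $\varphi$ continuous (or of product form) and extend by a monotone-class argument.

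To prove (ii) I would unfold the definitions and route the $\omega$-integral through the branches of $g$. Under \eqref{As:0plus} the branches are $g_i=g|_{I_i}$ with $I_i=[\tfrac i\sigma,\tfrac{i+1}\sigma)$, inverses $h_i(\omega)=\tfrac{\omega+i}{\sigma}$, and constant Jacobian $\sigma$. Using the disintegration $\{\mu_{\omega'}\}$ of $\mu$ and then splitting $\T=\bigsqcup_i I_i$,
\begin{align*}
\int_{\T\times\T}\varphi\,dF_*\mu
&=\int_{\T\times\T}\varphi\big(g(\omega'),f(\omega',x')\big)\,d\mu(\omega',x')\\
&=\int_\T\!\!\int_\T\varphi\big(g(\omega'),f(\omega',x')\big)\,d\mu_{\omega'}(x')\,d\omega'\\
&=\sum_{i=0}^{\sigma-1}\int_{I_i}\!\!\int_\T\varphi\big(\sigma\omega'-i,\,f(\omega',x')\big)\,d\mu_{\omega'}(x')\,d\omega'.
\end{align*}
In the $i$-th term I substitute $\omega=\sigma\omega'-i$, i.e.\ $\omega'=h_i(\omega)$, so that $d\omega'=\tfrac1\sigma\,d\omega$ and $\omega$ sweeps $\T$; then for fixed $\omega$ the inner integral is $\int_\T\varphi\big(\omega,f_{h_i(\omega)}(x')\big)\,d\mu_{h_i(\omega)}(x')=\int_\T\varphi(\omega,y)\,d\big(f_{h_i(\omega)*}\mu_{h_i(\omega)}\big)(y)$ by the definition of push-forward. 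Summing over $i$ and pulling the finite sum inside the integral yields exactly $\int_\T\int_\T\varphi(\omega,y)\,d\nu_\omega(y)\,d\omega$, which is (ii).

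For (i): each $\nu_\omega$ is a finite convex combination of probability measures, hence a probability measure; and $\omega\mapsto\nu_\omega$ is measurable because $\omega\mapsto h_i(\omega)$ is continuous, $\omega'\mapsto\mu_{\omega'}$ is measurable by the defining property of the disintegration of $\mu$, the assignment $(\omega,\rho)\mapsto f_{\omega*}\rho$ is jointly (Borel) measurable since $f$ is continuous, and finite sums of measurable maps are measurable.

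I do not expect a serious obstacle here. The only points that need a little care are the change of variables across the $\bmod 1$ identification on $\T$ — handled cleanly by splitting along the partition $\{I_i\}$, which is precisely where the no-distortion hypothesis $D=0$ pays off, producing the constant weights $1/\sigma$ and the translates $\tfrac{\omega+i}{\sigma}$ — and the (routine) joint-measurability statement in (i). In the presence of distortion the same computation should go through with $\sigma^{-1}$ replaced by the branch factors $|\D g_{h_i(\omega)}|^{-1}$ together with the corresponding $\rho_g$-reweighting of the fibre measures, leading to Proposition \ref{Prop:FormDisint}.
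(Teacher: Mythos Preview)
Your proof is correct, but the route differs from the paper's. The paper deduces the formula as a special case of Proposition \ref{Prop:FormDisint}, whose proof uses the \emph{topological} conditional measure characterisation of Theorem \ref{Thm:DisintSimm}: one writes $(F_*\mu)_\omega(I)=\lim_{\delta\to 0}\frac{(F_*\mu)([\omega-\delta,\omega+\delta]\times I)}{2\delta}$, expresses the numerator through the preimage $F^{-1}([\omega-\delta,\omega+\delta]\times I)$ and the disintegration of $\mu$, changes variables along each branch, and passes to the limit via Lebesgue differentiation. You instead verify the axioms of Definition \ref{Def:ConditionalMeas} directly by a single change of variables on the base, testing against bounded measurable $\varphi$. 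Your argument is more elementary here---no limits, no differentiation theorem---and, as you note, it generalises to the distortion case with the weights $|\D g_{h_i(\omega)}|^{-1}\rho_0(h_i(\omega))/\rho_1(\omega)$ in place of $1/\sigma$. The paper's limit-based approach has the minor advantage that it produces the disintegration constructively rather than by checking a candidate, which is useful when one does not already have a guess for the formula.
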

\noindent
The proof
%is not difficult  and 
is a particular case of Proposition \ref{Prop:FormDisint}, therefore we omit the details. Sufficient to say that, in this setting one has, {for an interval $I$}
\[
(F_* \mu)_\omega(I)=\lim_{\delta\rightarrow 0}\frac{(F_* \mu)([\omega-\delta,\omega+\delta]\times I)}{2\delta }
\]
where the numerator can be easily controlled.  
Next, recall Definition \ref{Def:HolderDisint}. In the proposition below we use \eqref{Eq:FormDisintNodist} to deduce that if $\mu$ has Lipschitz disintegration, then so do all its iterates  $F_*^n\mu$, and if $\sigma$ is sufficiently large, then their Lipschitz constants are all uniformly bounded. 

Before moving to the next proposition we recall for the reader's conveninece a property of Wasserstein spaces (see e.g. \cite{Villani09} for details). Given a Borel signed measure $\xi$ on $\T$ with $\xi(\T)=0$,  consider the Wasserstein norm
\[
\|\xi\|_W:=\sup_{\phi\in\Lip^1(\T)}\, \int_\T \phi\, d\xi.
\] 
Recall that we denoted by $\Lip^1(\T):=\{\phi:\T\rightarrow \R:\,\Lip(\phi)\leq 1\}$  the Lipschitz functions on $(\T,|\cdot|)$ with Lipschitz constant less or equal to one (we write $\Lip^1$ when there is no risk of confusion). The Kantorovich-Wasserstein distance defined in \eqref{Eq:KantWassDist} can be rewritten as 
\[
d_W(\mu,\nu)=\|\mu-\nu\|_W=\sup_{\phi\in\Lip^1}\int_\T\phi(x) \,d(\mu-\nu)(x)
\] 
which will simplify the notation later in the proofs.

\begin{proposition} \label{Prop:LipNodist}
 Let $\mu$ be a probability measure on $\T\times\T$ with horizontal marginal $\Leb_\T$ and Lipschitz disintegration $\{\mu_\omega\}_{\omega\in\T}$. Then the disintegration  of $F_*\mu$ defined in Eq. \eqref{Eq:FormDisintNodist} is also Lipschitz and 
  \[
\Lip\left(F_*\mu\right)\le L\sigma^{-1}\Lip(\mu)+L\sigma^{-1}.
  \]
 \end{proposition}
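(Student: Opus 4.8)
The plan is to estimate $d_W\big((F_*\mu)_{\omega_1},(F_*\mu)_{\omega_2}\big)$ directly from the explicit formula \eqref{Eq:FormDisintNodist}, using the linearity of $d_W$ under convex combinations and two elementary facts: that pushing forward by a single map $f_\omega$ contracts/distorts $d_W$ by at most its Lipschitz constant in the $x$-variable, and that varying the parameter $\omega$ in $f_\omega$ moves the pushforward by at most the Lipschitz constant in the $\omega$-variable. First I would write, for $\omega_1,\omega_2\in\T$,
\[
(F_*\mu)_{\omega_1}-(F_*\mu)_{\omega_2}=\frac1\sigma\sum_{i=0}^{\sigma-1}\Big(f_{\frac{\omega_1+i}{\sigma}*}\mu_{\frac{\omega_1+i}{\sigma}}-f_{\frac{\omega_2+i}{\sigma}*}\mu_{\frac{\omega_2+i}{\sigma}}\Big),
\]
so that by the triangle inequality for $\|\cdot\|_W$ (and convexity) it suffices to bound each summand by $L\sigma^{-1}|\omega_1-\omega_2|\,(\Lip(\mu)+1)$ after the prefactor $1/\sigma$ is accounted for; note $|\tfrac{\omega_1+i}{\sigma}-\tfrac{\omega_2+i}{\sigma}|=\sigma^{-1}|\omega_1-\omega_2|$, which is where the extra $\sigma^{-1}$ comes from.

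For a single summand, abbreviate $\zeta_j:=\tfrac{\omega_j+i}{\sigma}$ and split
\[
f_{\zeta_1*}\mu_{\zeta_1}-f_{\zeta_2*}\mu_{\zeta_2}=\big(f_{\zeta_1*}\mu_{\zeta_1}-f_{\zeta_1*}\mu_{\zeta_2}\big)+\big(f_{\zeta_1*}\mu_{\zeta_2}-f_{\zeta_2*}\mu_{\zeta_2}\big).
\]
The first bracket is controlled by testing against $\phi\in\Lip^1$: since $\phi\circ f_{\zeta_1}$ is Lipschitz in $x$ with constant at most $L$, we get $\|f_{\zeta_1*}\mu_{\zeta_1}-f_{\zeta_1*}\mu_{\zeta_2}\|_W\le L\, d_W(\mu_{\zeta_1},\mu_{\zeta_2})\le L\,\Lip(\mu)\,\sigma^{-1}|\omega_1-\omega_2|$, using the Lipschitz disintegration hypothesis on $\mu$. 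For the second bracket, couple $f_{\zeta_1*}\mu_{\zeta_2}$ and $f_{\zeta_2*}\mu_{\zeta_2}$ via the image under $(f_{\zeta_1},f_{\zeta_2})$ of the diagonal coupling of $\mu_{\zeta_2}$ with itself; the transport cost of this coupling is $\int_{\T^{m_2}}|f_{\zeta_1}(x)-f_{\zeta_2}(x)|\,d\mu_{\zeta_2}(x)\le L|\zeta_1-\zeta_2|=L\sigma^{-1}|\omega_1-\omega_2|$, where the pointwise bound $|f(\zeta_1,x)-f(\zeta_2,x)|\le L|\zeta_1-\zeta_2|$ is exactly \eqref{As:0.3}. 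Adding the two brackets bounds each summand by $L\sigma^{-1}(\Lip(\mu)+1)|\omega_1-\omega_2|$; averaging over $i$ with weight $1/\sigma$ and dividing by $|\omega_1-\omega_2|$ yields the claimed inequality
\[
\Lip(F_*\mu)\le L\sigma^{-1}\Lip(\mu)+L\sigma^{-1}.
\]

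I should also note the preliminary point that $(F_*\mu)_\omega$ as given by \eqref{Eq:FormDisintNodist} is genuinely the disintegration of $F_*\mu$ (this is quoted from the previous proposition, so I can take it as given) and that the supremum defining $\Lip$ is over $\omega_1\ne\omega_2$, so the division is legitimate. There is no real obstacle here; the only point requiring a little care is the coupling argument for the second bracket — one must make sure the pushforward of a coupling is a coupling of the pushforwards and that its cost is the asserted integral — but this is a standard property of $d_W$, presumably among the facts collected in Appendix \ref{App:Wasserstein}, so I would simply cite it there. The mild "obstacle," if any, is purely bookkeeping: keeping the two distinct roles of $L$ (regularity in $x$ versus regularity in $\omega$) straight, both of which are subsumed in the single joint Lipschitz constant \eqref{As:0.3}.
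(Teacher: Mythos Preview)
Your proof is correct and follows essentially the same approach as the paper: both split each summand via the intermediate term $f_{\zeta_1*}\mu_{\zeta_2}$ (the paper uses $f_{\omega_i'*}\mu_{\omega_i}$, which is the symmetric choice) and bound the two pieces using the Lipschitz regularity of $f$ in $x$ and in $\omega$ respectively. The only cosmetic difference is that for the ``varying $\omega$'' piece you invoke the primal coupling definition of $d_W$, whereas the paper uses the dual formulation with $\phi\in\Lip^1$; the resulting bounds are identical.
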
 
 \begin{proof}
 \begin{align*}
 d_W((F_*\mu)_\omega,(F_*\mu)_{\omega'})&=\sup_{\phi\in\Lip_{ }^1}\,\int_\T \phi\, d\left(\frac{1}{\sigma}\sum_{i=0}^{\sigma-1}f_{\frac{\omega+i}{\sigma}*}\mu_{\frac{\omega+i}{\sigma}}-\frac{1}{\sigma}\sum_{i=0}^{\sigma-1}f_{\frac{\omega'+i}{\sigma}*}\mu_{\frac{\omega'+i}{\sigma}}\right)\\
 &\le\frac{1}{\sigma}\sum_{i=0}^{\sigma-1}\,\,\sup_{\phi\in\Lip_{ }^1}\,\int_\T \phi d\left(f_{\frac{\omega+i}{\sigma}*}\mu_{\frac{\omega+i}{\sigma}}-f_{\frac{\omega'+i}{\sigma}*}\mu_{\frac{\omega'+i}{\sigma}}\right)
 \end{align*}
  Calling $\omega_i:=\frac{\omega+i}{\sigma}$, and $\omega_i':=\frac{\omega'+i}{\sigma}$ for brevity, we have 
 \begin{align*}
\sup_{\phi\in\Lip_{ }^1}\int_\T\phi d\left(f_{\omega_i*}\mu_{\omega_i}-f_{\omega_i'*}\mu_{\omega_i'}\right)&=  d_W(f_{\omega_i*}\mu_{\omega_i},f_{\omega_i'*}\mu_{\omega_i'})\\
 &\le  d_W(f_{\omega_i*}\mu_{\omega_i},f_{\omega_i'*}\mu_{\omega_i})+ d_W(f_{\omega'_i*}\mu_{\omega_i},f_{\omega_i'*}\mu_{\omega_i'}).
 \end{align*}
 
 We bound the first term from above. Notice that for any  $\xi\in\mc M_1(\T)$ and $\phi\in\Lip_{ }^1$ 
\begin{align*}
\int_{\T} \phi d(f_{\omega_i*}\xi-f_{\omega_i'*}\xi)&=\int_{\T} (\phi\circ f_{\omega_i}(x)-\phi\circ f_{\omega_i'}(x)) d\xi(x) \\
&\le \Lip(\phi) \int_{\T} |f_{\omega_i}(x)- f_{\omega_i'}(x)| d \xi(x) \\
&\le  L\,|\omega_i-\omega_i'| \le L\sigma^{-1}|\omega-\omega'|
\end{align*}
where $L$ is the Lipschitz constant of $f$. The above implies
\[
 d_W(f_{\omega_i*}\mu_{\omega_i},f_{\omega_i'*}\mu_{\omega_i})\leq L\sigma^{-1}|\omega-\omega'|.
\]
The second term can be bounded using an analogous computation
\begin{align*}
 d_W(f_{\omega_i'*}\mu_{\omega_i},f_{\omega_i'*}\mu_{\omega_i'})&\le L  d_W(\mu_{\omega_i},\mu_{\omega_i'}) \le L\Lip(\mu)\sigma^{-1}|\omega-{\omega'}| .
\end{align*}
where we used that the Lipschitz constant of $f_{\omega*}$ is equal to the  Lipschitz constant of $f_\omega$ (see Lemma \ref{Lem:LipcConstPushForw} in the Appendix) which is upper bounded by $L$ as in \eqref{As:0.3} .
 
 Putting everything together we obtain
  \[
 d_W((F_*\mu)_\omega,(F_*\mu)_{\omega'})\leq L\sigma^{-1}\left[1+\Lip(\mu)\right]\;|\omega-\omega'|.
  \]
 \end{proof}
 As a corollary to the previous proposition, for $\sigma$ sufficiently large, we obtain the existence of an invariant class of measures whose disintegration has Lipschitz dependence on the variable $\omega\in\T$, and whose Lipschitz constant goes to zero as $\sigma\rightarrow\infty$. More precisely, let's define the set $\mc M_{1,\Leb_\T}(\T\times \T)$ of probability measures on $\T\times \T$ with horizontal marginal $\Leb_\T$. Let's call $\Gamma_\ell\subset \mc M_{1,\Leb_\T}$ the set of those probability measures that have Lipschitz disintegration with Lipschitz  constant at most $\ell$:
 \[
 \Gamma_\ell:=\left\{\mu\in\mc M_{1,\Leb_\T}:\,\,\Lip(\mu)\le \ell\right\}.
 \]

 \begin{corollary}
 If  $\sigma>L$, then the set $\Gamma_{\ell}$ is invariant under the push-forward $F_*$ for every $\ell\ge \ell_0$ with 
 \[
\ell_0:=\frac{L\sigma^{-1}}{1-{L\sigma^{-1}}}.
 \] 
 \end{corollary}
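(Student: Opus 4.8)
The plan is to derive the corollary directly from Proposition~\ref{Prop:LipNodist} by showing that the affine map $\ell \mapsto L\sigma^{-1}\ell + L\sigma^{-1}$ maps the interval $[\ell_0,\infty)$ into itself when $\sigma > L$. First I would observe that $\Gamma_\ell \subseteq \mc M_{1,\Leb_\T}$ is indeed mapped into $\mc M_{1,\Leb_\T}$ by $F_*$: this is exactly the remark made before Proposition~\ref{Prop:LipNodist}, namely that the product structure of $F$ together with $g_*\Leb_\T = \Leb_\T$ forces $F_*\mu$ to have horizontal marginal $\Leb_\T$ whenever $\mu$ does. So the only thing to check is the bound on the Lipschitz constant of the disintegration.

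Next I would take $\mu \in \Gamma_\ell$ with $\ell \ge \ell_0$, so that $\Lip(\mu) \le \ell$. By Proposition~\ref{Prop:LipNodist}, the disintegration of $F_*\mu$ given in \eqref{Eq:FormDisintNodist} is Lipschitz with
\[
\Lip(F_*\mu) \le L\sigma^{-1}\Lip(\mu) + L\sigma^{-1} \le L\sigma^{-1}\ell + L\sigma^{-1}.
\]
It then suffices to verify that $L\sigma^{-1}\ell + L\sigma^{-1} \le \ell$ for all $\ell \ge \ell_0$. Since $\sigma > L$ gives $L\sigma^{-1} \in (0,1)$, the inequality $L\sigma^{-1}\ell + L\sigma^{-1} \le \ell$ rearranges to $\ell(1 - L\sigma^{-1}) \ge L\sigma^{-1}$, i.e. $\ell \ge \frac{L\sigma^{-1}}{1 - L\sigma^{-1}} = \ell_0$, which holds by assumption. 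Hence $\Lip(F_*\mu) \le \ell$, so $F_*\mu \in \Gamma_\ell$, proving invariance.

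Finally I would note that the value $\ell_0 = \frac{L\sigma^{-1}}{1-L\sigma^{-1}}$ is precisely the fixed point of the affine contraction $\ell \mapsto L\sigma^{-1}\ell + L\sigma^{-1}$, which is why it is the smallest admissible threshold, and which also makes transparent the claim in the introduction that this constant tends to $0$ as $\sigma \to \infty$. There is really no serious obstacle here: the corollary is a one-line consequence of the proposition plus elementary algebra with the affine map, and the only point requiring any care is recalling that $F_*$ preserves the class $\mc M_{1,\Leb_\T}$ so that $\Gamma_\ell$ is a well-defined subset on which to run the argument. I would keep the write-up to a couple of lines, since iterating the inclusion $F_*\Gamma_\ell \subseteq \Gamma_\ell$ then gives invariance under all powers $F_*^n$ for free.
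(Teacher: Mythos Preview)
Your proposal is correct and matches the paper's intended argument: the paper states the corollary without proof, treating it as an immediate consequence of Proposition~\ref{Prop:LipNodist}, and your fixed-point computation for the affine map $\ell\mapsto L\sigma^{-1}\ell+L\sigma^{-1}$ is exactly the one-line justification the paper omits.
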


  The following proposition controls the evolution of vertical marginals for  two probability measures in $\Gamma_{\ell_0}$ under application of $F_*$. In the statements below, the constants $C$ and $\lambda$ are the same as those in Assumption \eqref{As:1}.
\begin{proposition}[Approximate Memory Loss]\label{Prop:ContractionMarginalWassersteinDist}
For every $\epsilon>0$ there is $\sigma_0(\epsilon) >L$ such that if $\sigma>\sigma_0(\epsilon)$ then
\begin{itemize}
\item[i)]
\[ 
d_W(\Pi F_*^n\mu_1, \Pi F_*^n\mu_2)\le C\lambda^n+\epsilon,\quad\quad\quad \forall \mu_1,\mu_2\in\Gamma_{\ell_0};
\]
\item[ii)]
\[
d_W(\Pi F_*^n\mu, \eta_0)\le C\lambda^n+\epsilon,\quad\quad\quad \forall \mu\in\Gamma_{\ell_0};
\]
where $\eta_0$ is the stationary measure for $\mc P$.
\end{itemize}
\end{proposition}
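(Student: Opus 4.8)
The plan is to compare the action of $F_*$ on vertical marginals with the annealed operator $\mc P$, for which Assumption \eqref{As:1} supplies a genuine contraction, and then to absorb the discrepancy by a telescoping argument. The starting point is the identity obtained by integrating \eqref{Eq:FormDisintNodist} over $\omega$ and changing variables: for any $\nu$ with horizontal marginal $\Leb_\T$ and disintegration $\{\nu_\omega\}_{\omega\in\T}$ one has $\Pi F_*\nu=\int_\T f_{\omega*}\nu_\omega\,d\omega$, whereas $\mc P\,\Pi\nu=\int_\T\!\int_\T f_{\omega*}\nu_{\omega'}\,d\omega'\,d\omega$; the two differ only because in the first expression the index of the fibre map is slaved to the fibre, so they are close precisely when $\omega\mapsto\nu_\omega$ varies slowly.

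First I would prove a one-step estimate: using convexity of $d_W$ (Appendix \ref{App:Wasserstein}), the fact that each $f_{\omega*}$ is $L$-Lipschitz on $(\mc M_1(\T),d_W)$, and $\int_\T\!\int_\T|\omega-\omega'|\,d\omega\,d\omega'=\tfrac14$, one gets $d_W(\Pi F_*\nu,\mc P\Pi\nu)\le\tfrac14 L\,\Lip(\nu)$. Since $\sigma>L$ makes $\Gamma_{\ell_0}$ invariant under $F_*$, for $\mu\in\Gamma_{\ell_0}$ all iterates stay in $\Gamma_{\ell_0}$, so with $\nu_k:=\Pi F_*^k\mu$ we have $d_W(\nu_{k+1},\mc P\nu_k)\le\delta(\sigma):=\tfrac14 L\ell_0=\tfrac14 L^2/(\sigma-L)$, which goes to $0$ as $\sigma\to\infty$. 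Telescoping, $d_W(\Pi F_*^n\mu,\mc P^n\Pi\mu)\le\sum_{j=0}^{n-1}d_W(\mc P^{j}\nu_{n-j},\mc P^{j}(\mc P\nu_{n-1-j}))$, and the $j$-th term I would bound in two complementary ways: by $L^j\delta$ (since $\mc P$ is $L$-Lipschitz for $d_W$ and the two inner measures are $\delta$-close), and by $\tfrac12 C\lambda^j$ (from Assumption \eqref{As:1} combined with the elementary bound $d_W\le\tfrac12 d_{TV}$ on $\T$ and $d_{TV}\le1$). By the interpolation $\min\{a,b\}\le a^\theta b^{1-\theta}$ with $\theta\in(0,1)$ chosen small enough that $L^\theta\lambda^{1-\theta}<1$ (possible because $\lambda<1$), the resulting geometric series sums to a constant, giving $d_W(\Pi F_*^n\mu,\mc P^n\Pi\mu)\le K\,\delta(\sigma)^\theta$ uniformly in $n$, with $K=K(C,L,\lambda)$. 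Given $\epsilon>0$ I would choose $\sigma_0(\epsilon)>L$ so that $2K\delta(\sigma)^\theta\le\epsilon$ whenever $\sigma>\sigma_0(\epsilon)$. Then (i) follows from the triangle inequality through $\mc P^n\Pi\mu_1$ and $\mc P^n\Pi\mu_2$, using $d_W(\mc P^n\Pi\mu_1,\mc P^n\Pi\mu_2)\le\tfrac12 C\lambda^n d_{TV}(\Pi\mu_1,\Pi\mu_2)\le\tfrac12 C\lambda^n$; and (ii) follows the same way through $\mc P^n\Pi\mu$, using $\mc P^n\eta_0=\eta_0$ and $d_W(\mc P^n\Pi\mu,\eta_0)\le\tfrac12 C\lambda^n$. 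In both cases the bound is in fact $\tfrac12 C\lambda^n+\epsilon\le C\lambda^n+\epsilon$.

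The step I expect to be the real obstacle is the mismatch of metrics: Assumption \eqref{As:1} contracts $\mc P$ in $d_{TV}$, but the only available invariant class $\Gamma_{\ell_0}$ is controlled merely in the weaker $d_W$, and since $d_W\lesssim d_{TV}$ but not conversely one cannot directly transport the contraction; a straight $d_W$ telescoping would instead pick up a factor $\sum_j L^j$, which diverges once $L\ge1$. The remedy is exactly the two-regime bound above: for roughly the first $\log(1/\delta)$ iterates the crude Lipschitz estimate $L^j\delta$ remains small, while beyond that the $d_{TV}$-contraction of $\mc P$ takes over through the cheap comparison $d_{TV}\le1$; the interpolation inequality packages the two regimes and shows the total accumulated error is $O(\delta^\theta)$, hence tunable by $\sigma$. (When $L<1$ the operator $\mc P$ is already a $d_W$-contraction and Assumption \eqref{As:1} is not even needed, but the argument handles all cases at once.)
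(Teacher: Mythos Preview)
Your proof is correct. Both you and the paper compare $\Pi F_*^n\mu$ with $\mc P^n\Pi\mu$ via the same telescoping sum $\sum_{j=0}^{n-1}\mc P^j(\nu_{n-j}-\mc P\nu_{n-1-j})$ (the paper writes it out with explicit iterated integrals, but it is the same expression), and both use the one-step bound $d_W(\Pi F_*\nu,\mc P\Pi\nu)=O(\ell_0)$ coming from the Lipschitz disintegration. The genuine difference lies in how the telescoped error is summed. The paper bounds the $j$-th term only by $2L^{j}\ell_0$, which would diverge if summed over all $j$; it circumvents this by a \emph{finite-horizon restart}: choose $n_0$ with $C\lambda^{n_0}\le\epsilon/2$, make $\sigma$ large enough that $2\ell_0\sum_{j<n_0}L^{j}\le\epsilon/2$, and for $n>n_0$ use invariance of $\Gamma_{\ell_0}$ to reduce to the $n_0$-step estimate. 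You instead observe that the $j$-th term is simultaneously $\le L^j\delta$ and $\le \tfrac12 C\lambda^j$ (the latter via Assumption~\ref{As:1} and the trivial bound $d_{TV}\le 1$), and interpolate $\min\{a,b\}\le a^\theta b^{1-\theta}$ to obtain a genuinely summable geometric series. This buys you a uniform-in-$n$ bound $d_W(\Pi F_*^n\mu,\mc P^n\Pi\mu)\le K\delta(\sigma)^\theta$, which is slightly stronger than what the paper states as an intermediate step, avoids the two-case analysis $n\le n_0$ versus $n>n_0$, and makes the dependence $\epsilon\sim\sigma^{-\theta}$ explicit with a concrete $\theta=\theta(L,\lambda)$. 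The paper's route is more elementary (no interpolation trick) and arrives at the same qualitative conclusion $\epsilon\lesssim\sigma^{-\gamma}$, noted there in a remark after the proof.
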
   
\begin{proof}
Let $\mu:=\mu_1-\mu_2$ and recall that $\Pi\mu=\int_\T \mu_\omega \,d\omega$ is the vertical marginal of $\mu$.  Since 
\[
d_W(\mu_\omega,\mu_{\omega'})\le \ell_0|\omega-\omega'|\le \ell_0
\]
 then $d_W(\mu_\omega, \Pi\mu)\le \ell_0$ (see Lemma \ref{Lem:AvofMeasures} in the Appendix). Therefore,
\begin{align*}
\Pi F_*\mu&=\int_{\T}f_{\omega*}\mu_\omega d\omega  =\int_{\T}f_{\omega*}(\mu_\omega-\Pi\mu)d\omega+\int_{\T}f_{\omega*}(\Pi\mu)d\omega\\
&=\int_{\T}f_{\omega*}(\mu_\omega-\Pi\mu)d\omega+\mc P(\Pi\mu),
\end{align*}
where $\mc P$ is defined in Eq. \eqref{Def:OperatorP} and, by Lemma \ref{Lem:LipcConstPushForw},  
\[
\left\|\int_{\T}f_{\omega*}(\mu_\omega-\Pi\mu)d\omega\right\|_W\leq L\ell_0.
\]

For higher iterates, one gets the telescopic sum
\begin{align}
\Pi F^n_*\mu&=\int_\T d{\omega_{n-1}}f_{\omega_{n-1}*}( (F_*^{n-1}\mu)_{\omega_{n-1}}-\Pi F_*^{n-1}\mu)+\int_\T d{\omega_{n-1}}f_{\omega_{n-1}*}( \Pi F_*^{n-1}\mu)\nonumber\\
&=\mc P^n(\Pi\mu)+\sum_{i=0}^{n-1}\int_\T d\omega_{n-1}f_{\omega_{n-1}*}...\int_\T d \omega_i f_{\omega_{i}*}((F_*^i\mu)_{\omega_i}-\Pi F_*^i\mu)\label{Eq:TelescEquations}
\end{align}
and by triangular inequality
\begin{align}\label{Eq:EstWVertMarg}
\|\Pi F^n_*\mu\|_{W}&\le \|\mc P^n\Pi\mu\|_W+ \left\|\sum_{i=0}^{n-1}\int_\T d\omega_{n-1}f_{\omega_{n-1}*}...\int_\T d \omega_i f_{\omega_{i}*}((F_*^i\mu)_{\omega_i}-\Pi F_*^i\mu)\right\|_W.
\end{align}
For the first term in the above inequality, 
\[
\|\mc P^n(\Pi \mu)\|_W=d_W(\mc P^n\Pi\mu_1,\mc P^n\Pi\mu_2)\le d_{TV}(\mc P^n\Pi\mu_1,\mc P^n\Pi\mu_2)\le C\lambda^n
\]
where  the first  inequality follows by $d_W\le d_{TV}$ (see Lemma \ref{Lem:dwlessdtv}), and  the last inequality is Assumption \ref{As:1}.
For the second term, each summand can be treated as follows
\begin{align}
&\left\|\int_\T d\omega_{n-1}f_{\omega_{n-1}*}...\int_\T d \omega_i f_{\omega_{i}*}\left [(F_*^i\mu_1)_{\omega_i}-(F_*^i\mu_2)_{\omega_i}-\Pi F_*^i\mu_1+\Pi F_*^i\mu_2\right]\right\|_W\le \nonumber\\
&\quad\quad\le \sup_\omega \Lip(f_{\omega*})^{n-1-i}\sup_\omega\left\|(F_*^i\mu_1)_{\omega_i}-(F_*^i\mu_2)_{\omega_i}-\Pi F_*^i\mu_1+\Pi F_*^i\mu_2\right\|_W\nonumber\\
&\quad\quad\leq 2 L^{n-1-i}\,\ell_0.\label{Eq:EstReminder}
\end{align}

Now, one can pick $n_0\in\N$ such that $C\lambda^{n_0}\le \epsilon/2$, and $\sigma_0>0$ so that 
\[
\frac{L\sigma_0^{-1}}{ 1 - L \sigma_0^{-1}}\sum_{i=0}^{n_0-1}L^{n_0-1-i} = \ell_0\sum_{i=0}^{n_0-1}L^{n_0-1-i}\le \epsilon/2.
\]
This way, if $n\le n_0$
\begin{align*}
\|\Pi F_*^n\mu\|_W&\le C\lambda^n+2\ell_0\sum_{i=0}^{n-1}L^i\\
&\le C\lambda^n+2\ell_0\sum_{i=0}^{n_0-1}L^i\\
&\le C\lambda^n+\epsilon/2
\end{align*}
and if $n\ge n_0$, \[d_W(\Pi F_*^n\mu_1,\Pi F_*^n\mu_2)=d_W(\Pi F_*^{n_0}F_*^{n-n_0}\mu_1,\Pi F_*^{n_0}F_*^{n-n_0}\mu_2),\] 
and since $F_*^{n-n_0}\mu_1$ and $F_*^{n-n_0}\mu_2$ both belong to $\Gamma_{\ell_0}$
\begin{align*}
d_W(\Pi F_*^{n_0}(F_*^{n-n_0}\mu_1),\Pi F_*^{n_0}(F_*^{n-n_0}\mu_2))&\le C\lambda^{n_0}+\epsilon/2 \le C\lambda^n+\epsilon
\end{align*}
which proves point i).

For point ii), going back to \eqref{Eq:TelescEquations} and picking $n_0$ and $\sigma_0$ as above,  for any $\mu\in\Gamma_{\ell_0}$ and $n\le n_0$
\begin{align*}
d_W(\Pi F_*^n\mu,\eta_0)&\le d_W(\Pi F_*^n\mu,\mc P^n\Pi\mu)+d_W(\mc P^n\Pi\mu,\eta_0)\\
&\le C\lambda^n+\epsilon/2
\end{align*}
while for $n\ge n_0$ we use {an analogous computation} and get
\begin{align*}
d_W(\Pi F_*^n\mu,\eta_0)&\le d_W(\Pi F_*^{n_0}F_*^{n-n_0}\mu,\mc P^{n_0} \Pi F_*^{n-n_0}\mu)+d_W(\mc P^{n_0} \Pi F_*^{n-n_0}\mu,\eta_0)\\
&\le C\lambda^{n_0}+\epsilon/2\\
&\le C\lambda^n+\epsilon
\end{align*}
\end{proof}

We can now proceed with the proof of the main theorem in the case without distortion.
\begin{proof}[Proof of Theorem \ref{Thm:AppDecayofCorr} under condition \eqref{As:0plus}]
Assume that $\int\psi(x)dx=0$. Then $\psi=\psi_1-\psi_2$ where {$\psi_1,\psi_2\ge0$ are the positive and negative parts of $\psi$} and $\int \psi_1=\int \psi_2=: M$. Take $\mu$ the  measure on $\T\times \T$ defined as 
\begin{equation}\label{Eq:Measuremu}
d\mu(\omega,x)=M^{-1}\left(\psi_1(x)-\psi_2(x)\right)d\omega dx.
\end{equation}
It follows that $\mu=\mu_1-\mu_2$ where $\mu_1$, $\mu_2$ are probability measures having  constant disintegrations $\mu_{1,\omega}=M^{-1}\psi_1(x)dx$ and  $\mu_{2,\omega}=M^{-1}\psi_2(x)dx$.  In particular, $\mu_1,\,\mu_2\in\Gamma_{\ell_0}$. 

Now, picking $\sigma_0$ as in Proposition \ref{Prop:ContractionMarginalWassersteinDist}, if $\sigma>\sigma_0$
\begin{align}
\left|\int_{\T\times\T}\phi\circ F^n(\omega,x) \psi(x)dxd\omega\right|&= \left|M\int_{\T\times\T}\phi(x) d(F^n_*\mu)(\omega,x)\right|\label{Eq:DecProofNoDist1}\\
&= M\left|\int_{\T}\phi(x) d(\Pi F^n_*\mu)(x)\right|\label{Eq:DecProofNoDist2}\\
&\le M \Lip(\phi)(C\lambda^n+\epsilon)\label{Eq:DecProofNoDist3}
\end{align}
where for \eqref{Eq:DecProofNoDist1} we used the duality property of the push-forward and the definition of $\mu$, and  in \eqref{Eq:DecProofNoDist3} we used  that $\phi$ does not depend on $\omega\in\T$ and Proposition \ref{Prop:ContractionMarginalWassersteinDist}.

If $\int\psi\neq 0$ consider $\tilde \psi:=\psi-\int\psi$. 
\begin{align*}
\int_{\T\times\T}\phi\circ F^n(\omega,x) \psi(x)dxd\omega&= \int_{\T\times\T}\phi\circ F^n(\omega,x) \tilde\psi(x)dxd\omega+\\
&+ \left(\int_{\T\times\T}\phi\circ F^n(\omega,x)d\omega dx-\int_\T\phi(x)d\eta_0(x)\right)\left(\int_\T\psi(x)dx\right)+\\
&+\left(\int\phi(x)d\eta_0(x)\right)\left(\int_\T\psi(x)dx\right).
\end{align*}
 
For the first term, we use \eqref{Eq:DecProofNoDist3}; for the second term
\begin{align*}
\left|\int_{\T\times\T}\phi\circ F^n(\omega,x)d\omega dx-\int_\T\phi(x)d\eta_0(x)\right|&=\left|\int_\T\phi(x)d(\Pi F_*^n\Leb_{\T\times\T}-\eta_0)(x)\right|\\
&\le \Lip(\phi)d_W(\Pi F_*^n\Leb_{\T\times\T},\eta_0)
\end{align*}
and from point ii) of Proposition \ref{Prop:ContractionMarginalWassersteinDist} the above is less than $\Lip(\phi)[C\lambda^n+\epsilon]$. By triangular inequality
\[
\left|\int_{\T\times\T}\phi\circ F^n(\omega,x) \psi(x)dxd\omega-\left(\int_\T\phi(x)d\eta_0(x)\right)\left(\int_\T\psi(x)dx\right)\right|\le C_{\varphi,\psi}(C\lambda^n+\epsilon)
\]
where $C_{\varphi,\psi}\le \frac32\|\psi\|_{L^1}(\Lip(\phi)+1)$.
\end{proof}

\begin{remark}
As a remark, note that if one tries to estimate the quantifier $\epsilon$ in Theorem \ref{Thm:AppDecayofCorr}  for a given datum,  by inspecting the proof of this simpler case, one realizes that {a bound for }$\epsilon$ is proportional to the smallest number one gets from the sequence $\left\{ \max\{C\lambda^n ,  2\sigma^{-1} L^n\}\right\}_{n \in \N} $.  Since the first sequence is decreasing, while the second is increasing, the optimal trade off is achieved when they are of about the same size. Thus imposing that the two numbers are equal,  one obtains the estimate  $\epsilon \lesssim \sigma^{-\gamma} $ for some $\gamma>0$ which depends on $C$, $\lambda$, and $L$. 
\end{remark}
\section{General case: proof of Theorem \ref{Thm:AppDecayofCorr}}\label{Sec:Distortion}

\subsection{Control on the disintegration along vertical fibres}

Take a measure $\mu_0$ on $\T^{m_1}\times \T^{m_2}$ with horizontal marginal equal to  $\nu_0\in\mc M_1(\T^{m_1})$ which is absolutely continuous with respect to Lebesgue, and let 
$\mu_1:=F_* \mu_0$.
It follows from the skew-product structure of $ F$ that the horizontal marginal of $\mu_1$ equals $\nu_1:=g_*\nu_0$. We will denote by $\rho_1$ the density of $\nu_1$\footnote{Since $g$ is a local diffeomorphism is in particular \emph{nonsingular} and its push-forward sends absolutely continuous measures to absolutely continuous measures.}.  Recall from Section \ref{sec:setting} that $g$ is a local diffeomorphism, $g_i$ are its invertible branches, and $h_i$  their
inverses. Then an explicit expression of $\rho_1$ in terms of $\rho_0$ is given by 
\[
\rho_1(\omega)=\sum_{i=1}^d\frac{\rho_0(\omega_i)}{|\D g_{\omega_i}|} \quad\forall\omega\in\T^{m_1}
\]
where we denote by $\omega_i=h_i\omega$  the preimages of $\omega$ and  $| \D g_{\omega_i} | $ for $  | \D g (\omega_i) |$.   

For $k\in\{0,1\}$, let  $\{\mu_{k,\omega}\}_{\omega\in\T^{m_1}}$ be a disintegration of $ \mu_k$  w.r.t. the measurable partition $\{\{\omega\}\times\T^{m_2}\}_{\omega\in\T^{m_1}}$. For a definition and some results on disintegrations see Appendix \ref{App:RohlinThm}.

\begin{proposition} \label{Prop:FormDisint} A disintegration $\{\mu_{1,\omega}\}_{\omega\in\T^{m_1}}$ of $\mu_1$ is given by
\begin{equation}\label{Eq:ExpMuomega}
\mu_{1,\omega}=\frac{1}{\rho_1(\omega)}\sum_{i=1}^d\frac{\rho_0(\omega_i)}{|\D g_{\omega_i}|}f_{\omega_i*}\mu_{0,\omega_i}.
 \end{equation}
 \end{proposition}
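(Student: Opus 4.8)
The plan is to verify directly that the family $\{\mu_{1,\omega}\}_{\omega\in\T^{m_1}}$ defined by \eqref{Eq:ExpMuomega} satisfies the two defining properties of a disintegration of $\mu_1=F_*\mu_0$ with respect to the partition $\{\{\omega\}\times\T^{m_2}\}_{\omega\in\T^{m_1}}$: namely (a) each $\mu_{1,\omega}$ is a probability measure on $\T^{m_2}$ (equivalently, on $\{\omega\}\times\T^{m_2}$) and $\omega\mapsto\mu_{1,\omega}(A)$ is measurable for each Borel $A$, and (b) for every bounded measurable $\Phi:\T^{m_1}\times\T^{m_2}\to\R$ one has $\int\Phi\,d\mu_1=\int_{\T^{m_1}}\big(\int_{\T^{m_2}}\Phi(\omega,x)\,d\mu_{1,\omega}(x)\big)\,d\nu_1(\omega)$. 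Since disintegrations with respect to this partition are unique up to $\nu_1$-null sets (as recalled in the excerpt and Appendix \ref{App:RohlinThm}), checking these two properties identifies \eqref{Eq:ExpMuomega} as \emph{the} disintegration.

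Property (a) is immediate: for fixed $\omega$ with $\rho_1(\omega)>0$, the right-hand side of \eqref{Eq:ExpMuomega} is a convex combination of the probability measures $f_{\omega_i*}\mu_{0,\omega_i}$ with weights $\rho_0(\omega_i)/(\rho_1(\omega)|\D g_{\omega_i}|)$ which sum to $1$ precisely by the formula $\rho_1(\omega)=\sum_i\rho_0(\omega_i)/|\D g_{\omega_i}|$; measurability in $\omega$ follows since $\omega\mapsto h_i(\omega)$, $\omega\mapsto|\D g_{h_i(\omega)}|$, $\omega\mapsto\rho_0(h_i(\omega))$ are measurable and $\omega\mapsto f_{\omega_i*}\mu_{0,\omega_i}(A)$ is measurable (composition of measurable maps, the disintegration $\{\mu_{0,\omega}\}$ being measurable by hypothesis). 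Property (b) is the computational heart. Starting from $\int_{\T^{m_1}\times\T^{m_2}}\Phi\,d\mu_1=\int\Phi\circ F\,d\mu_0$, I disintegrate $\mu_0$ to write this as $\int_{\T^{m_1}}\big(\int_{\T^{m_2}}\Phi(g(\eta),f_\eta(y))\,d\mu_{0,\eta}(y)\big)\rho_0(\eta)\,d\eta$. Then I split $\T^{m_1}$ into the branch domains $I_i$, change variables $\eta=h_i(\omega)$ on each (so $d\eta=|\D h_i(\omega)|\,d\omega=|\D g_{h_i(\omega)}|^{-1}\,d\omega$ and $g(\eta)=\omega$), obtaining
\[
\int\Phi\,d\mu_1=\int_{\T^{m_1}}\sum_{i=1}^d\frac{\rho_0(\omega_i)}{|\D g_{\omega_i}|}\Big(\int_{\T^{m_2}}\Phi(\omega,f_{\omega_i}(y))\,d\mu_{0,\omega_i}(y)\Big)\,d\omega,
\]
where $\omega_i=h_i(\omega)$. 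Recognizing $\int\Phi(\omega,f_{\omega_i}(y))\,d\mu_{0,\omega_i}(y)=\int\Phi(\omega,x)\,d(f_{\omega_i*}\mu_{0,\omega_i})(x)$ and factoring $\rho_1(\omega)=d\nu_1/d\Leb$, this is exactly $\int_{\T^{m_1}}\big(\int_{\T^{m_2}}\Phi(\omega,x)\,d\mu_{1,\omega}(x)\big)\rho_1(\omega)\,d\omega$ with $\mu_{1,\omega}$ as in \eqref{Eq:ExpMuomega}, which is property (b).

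The main obstacle is not conceptual but bookkeeping: one must handle the measure-zero set where $\rho_1(\omega)=0$ (there the disintegration is defined arbitrarily, which is harmless since it is $\nu_1$-null), justify the change of variables on each branch using that $g$ is a $C^2$ local diffeomorphism with the $I_i$ forming a partition mod $0$ and $\mbox{Op}(I_i)=I_i\bmod 0$, and invoke Fubini/Tonelli together with the measurability of the disintegration $\{\mu_{0,\omega}\}$ to interchange the integrals and the (finite) sum over branches legitimately. I would also note that the case without distortion treated earlier — formula \eqref{Eq:FormDisintNodist} — is recovered by taking $g(\omega)=\sigma\omega\bmod 1$, for which $d=\sigma$, $\rho_0\equiv\rho_1\equiv1$, $|\D g_{\omega_i}|=\sigma$, and $\omega_i=(\omega+i)/\sigma$, so the general statement indeed subsumes it.
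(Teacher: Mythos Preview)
Your proof is correct but proceeds by a different route from the paper's. The paper computes the disintegration \emph{a posteriori} via the topological conditional measure of Appendix~\ref{App:RohlinThm}: it writes $\mu_{1,\omega}$ as the weak$^*$ limit of the normalized restrictions of $\mu_1$ to cylinders $B_\delta(\omega)\times\T^{m_2}$, expresses $\mu_1(B_\delta(\omega)\times I)=\mu_0(F^{-1}(B_\delta(\omega)\times I))$ as an integral over preimages, performs the same branch-by-branch change of variables you use, and then invokes Lebesgue's differentiation theorem to pass to the limit $\delta\to0$. You instead guess the formula and verify directly the two axioms of a disintegration, concluding by uniqueness.

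Your approach is slightly more elementary: it avoids the topological conditional measure machinery and Lebesgue differentiation, relying only on the change-of-variables formula for $C^1$ diffeomorphisms and the uniqueness of disintegrations. The paper's approach, by contrast, is constructive in that it derives the formula rather than verifying a guess, and it dovetails with the Rohlin framework set up in the appendix. Both are standard; the computational core---splitting into branches $I_i$ and substituting $\eta=h_i(\omega)$---is identical.
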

 \begin{proof}

 Let  $B_\delta(\omega)\subset \T^{m_1}$ be the Euclidean ball centered at $\omega$ of radius ${\delta}$. By Theorem \ref{Thm:DisintSimm} in  Appendix \ref{App:RohlinThm}, for $\Leb_{\T^{m_1}}-$a.e. $\omega$
\begin{equation}\label{Eq:Disint(n)}
 \mu_{1,\omega}=\lim_{{\delta}\rightarrow 0}\frac{\int_{B_\delta(\omega)}ds{\rho_1}(s){\mu_{1,s}}}{\int_{B_\delta(\omega)}ds{\rho_1}(s)}
\end{equation}
where the limit is with respect to the weak$*$ topology. Using the definition of disintegration and that ${\mu_1}(B_\delta(\omega)\times I)={\mu_0}(F^{-1}(B_\delta(\omega)\times I))$, for every measurable set $I$ on $\T^{m_2}$ one gets, for ${\delta}>0$ sufficiently small,
\[
\int_{B_\delta(\omega)}ds{\rho_1}(s){\mu_{1,s}}=\sum_{i=1}^d\int_{h_i(B_\delta(\omega))}ds {\rho_0}(s)f_{s*}{\mu_{0,s}}.
\]
%(see Figure \ref{}).
By  changing  variables, $s=h_i(s')$, and multiplying and dividing by ${\rho_1}(s)$, the above equals
\begin{align*}
\int_{B_\delta(\omega)}ds'{\rho_1}(s')\left[\frac{1}{\rho_1(s')}\sum_{i=1}^d\frac{{\rho_0}(s'_i)}{|\D g_{s'_i}|}f_{s'_i*}{\mu_{0,s'_i}}\right],
\end{align*}
where we denoted $s'_i=h_i(s')$.  Applying Lebesgue's differentiation theorem, Eq. \eqref{Eq:Disint(n)} becomes
\[
{\mu_{1,\omega}}=\frac{1}{\rho_1(\omega)}\sum_{i=1}^d\frac{{\rho_0}(\omega_i)}{|\D g_{\omega_i}|}f_{\omega_i*}{\mu_{0,\omega_i}}.
\]
%\begin{figure}
%\caption{Explanation of the above}
%\end{figure}
\end{proof}

The formula for the evolution of disintegrations in \eqref{Eq:ExpMuomega} depends on $\nu_0$ and $\nu_1$, the horizontal marginals of the measures $\mu_0$ and $\mu_1$.  Thanks to assumptions on $g$, the evolution of the horizontal can be controlled (see Lemma \ref{Lem:Cones} below).

Consider for $a\ge 0$,  the cone of log-Lipschitz functions,
\[
\mc V_a:=\left\{\phi:\T^{m_1}\rightarrow \R^+:\,\frac{\phi(\omega)}{\phi(\omega')}\le e^{a|\omega-\omega'|}\right\}.
\]
The following lemma  gathers some standard facts about uniformly expanding maps with bounded distortion, such as  $g$. %and follows from the uniform expansion and  the bound on the distortion of $g$.
\begin{lemma} \label{Lem:Cones} Let $g:\T^{m_1}\rightarrow \T^{m_1}$ be a $C^2$ local diffeomorphism satisfying \eqref{As:0.1}-\eqref{As:0.2}, and let $\rho_0$ and $\rho_1$ be as above.  

\
\begin{itemize}
\item[i)]  If $\rho_0\in\mc V_a$, then  $\rho_1\in\mc V_{\sigma^{-1}a+D}$. In particular, if  $a\ge a_0:= \frac{D}{1-\sigma^{-1}}$, then ${\rho_1}\in\mc V_a$;
\item[ii)] If $\rho_0\in\mc V_{a_0}$, calling $\rho_n$ the density of $\nu_n:=g_*^n\nu_0$,  there are $C_g>0$ and $\lambda_g\in(0,1)$ such that
\[
\|\rho_n-\rho_g\|_{\infty}:=\sup_{\omega\in\T^{m_1}}|\rho_n(\omega)-\rho_g(\omega)|\le C_g\lambda_g^n.
\]
\end{itemize}
\end{lemma}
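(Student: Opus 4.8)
\textbf{Proof plan for Lemma \ref{Lem:Cones}.}

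\emph{Part i).} The plan is to start from the transfer-operator formula $\rho_1(\omega)=\sum_{i=1}^d \rho_0(\omega_i)/|\D g_{\omega_i}|$ with $\omega_i=h_i(\omega)$, and estimate the ratio $\rho_1(\omega_1)/\rho_1(\omega_2)$ for $\omega_1,\omega_2\in\T^{m_1}$. The standard trick is that a sum of nonnegative terms has ratio bounded by the maximum of the term-by-term ratios, so it suffices to bound $\bigl(\rho_0(h_i\omega_1)/|\D g_{h_i\omega_1}|\bigr)\big/\bigl(\rho_0(h_i\omega_2)/|\D g_{h_i\omega_2}|\bigr)$ for each branch $i$. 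This splits as the product of $\rho_0(h_i\omega_1)/\rho_0(h_i\omega_2)$ and $|\D g_{h_i\omega_2}|/|\D g_{h_i\omega_1}|$. The first factor is at most $e^{a|h_i\omega_1-h_i\omega_2|}\le e^{a\sigma^{-1}|\omega_1-\omega_2|}$, since $h_i$ contracts distances by at least $\sigma$ by \eqref{As:0.1}; the second factor is at most $e^{D|\omega_1-\omega_2|}$ directly by \eqref{As:0.2}. Multiplying gives $\rho_1(\omega_1)/\rho_1(\omega_2)\le e^{(\sigma^{-1}a+D)|\omega_1-\omega_2|}$, i.e. $\rho_1\in\mc V_{\sigma^{-1}a+D}$. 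For the ``in particular'' clause, observe that $a\mapsto \sigma^{-1}a+D$ has the unique fixed point $a_0=D/(1-\sigma^{-1})$ and is increasing with slope $\sigma^{-1}<1$, so $a\ge a_0$ implies $\sigma^{-1}a+D\le a$, hence $\mc V_a$ is forward invariant under $\rho_0\mapsto\rho_1$; since cones are nested ($\mc V_b\subseteq\mc V_a$ for $b\le a$) this yields $\rho_1\in\mc V_a$.

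\emph{Part ii).} Here the plan is to invoke the classical spectral theory for the transfer operator of a uniformly expanding $C^2$ map with bounded distortion. Once part i) establishes that the cone $\mc V_{a_0}$ (intersected with the probability densities) is invariant under the normalized transfer operator $\rho\mapsto\rho_1$, one knows this cone has finite diameter in the projective (Hilbert) metric — or, alternatively, one applies the Lasota–Yorke / Doeblin–Fortet machinery — to conclude that the transfer operator is a contraction on a suitable space and has a spectral gap, with $\rho_g$ the unique fixed density. Concretely: densities in $\mc V_{a_0}$ are uniformly bounded above and below (a log-Lipschitz function on the compact torus with fixed constant and total integral one lies between two positive constants depending only on $a_0$ and $m_1$), so $\mc V_{a_0}$ has finite $d_W$- or $\|\cdot\|_\infty$-diameter, and Birkhoff's theorem gives a uniform contraction rate $\lambda_g\in(0,1)$ for the projective metric, which transfers to an exponential bound $\|\rho_n-\rho_g\|_\infty\le C_g\lambda_g^n$ for $\rho_0\in\mc V_{a_0}$. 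This is exactly the content cited from \cite{BG97,SDDSViana} in the setting paragraph, so I would state it as a known fact and give the reference rather than reproving it.

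\emph{Main obstacle.} Part i) is a short direct computation with no real difficulty beyond bookkeeping the branch-wise estimates. The only genuinely nontrivial input is part ii), and there the ``obstacle'' is purely expository: deciding how much of the standard expanding-map spectral-gap argument (cone contraction in Hilbert metric, or Lasota–Yorke plus Ionescu-Tulcea–Marinescu) to reproduce versus cite. Since the paper already references \cite{BG97} and \cite{SDDSViana} for exactly this statement, I expect the proof to dispatch part ii) in one or two sentences by quoting those results, noting only that part i) provides the invariant cone that makes them applicable.
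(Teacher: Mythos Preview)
Your proposal is correct and, if anything, more detailed than the paper's own proof, which consists entirely of the one-line citation ``See e.g.\ \cite{Liverani95}, \cite{SDDSViana}.'' Your explicit computation for part i) and your plan to dispatch part ii) by reference to the standard cone-contraction/spectral-gap theory for expanding maps are exactly in line with what those references contain.
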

\begin{proof}
See e.g. \cite{Liverani95}, \cite{SDDSViana}.
\end{proof}
\begin{remark}
In point ii) of Lemma \ref{Lem:Cones}, one can choose $C_g:=C_g(D,\sigma)$ and $\lambda_g:=\lambda_g(D,\sigma)$. Moreover, fixed $D$, the two functions can be chosen to be decreasing with respect to $\sigma$. This implies that fixed $D>0$ and $\sigma_0>1$, there are constants $\bar C$ and $\bar \lambda\in(0,1)$ such that  $C_g<\bar C$ and $\lambda_g<\bar \lambda$ for any $g$ satisfying \eqref{As:0.1}-\eqref{As:0.2} with $\sigma\ge \sigma_0$.
\end{remark}

From now on we will restrict our analysis to probability measures on $\T^{m_1}\times \T^{m_2}$ whose horizontal marginals belong to $\mc V_a$ for some $a>0$.

%\begin{proof}
%By induction, assume that  ${\rho_0}\in\mc V_a$. Then
%${\rho_1}(\omega)=\sum_{i=1}^d\frac{{\rho_0}(\omega_i)}{|\D g_{\omega_i}|}$. Then, 
%\begin{align*}
%\frac{{\rho_1}(\omega)}{{\rho_1}(\omega')}&\leq \frac{\sum_{i=1}^d\frac{{\rho_0}(\omega_i)}{|\D g_{\omega_i}|}}{\sum_{i=1}^d\frac{{\rho_0}(\omega'_i)}{g'(\omega'_i)}}\\
%&\leq e^{a\sup_i|\omega_i-\omega'_i|}e^{D|\omega-\omega'|}\\
%&\leq e^{(\sigma^{-1}a+D)|\omega-\omega'|}
%\end{align*}
%where we went from the first to the second line using the fact that the distortion condition in \eqref{As:0} can be also rewritten as 
%\[
%\frac{g'(h_i\omega)}{g'(h_i\omega')}<e^{D|\omega-\omega'|} \quad\forall \omega,\,\omega'\in\T.
%\]
%thus
%\[
%{\rho_0}(\omega_i')\le e^{a|\omega_i-\omega_i'|}{\rho_0}(\omega_i)\le e^{a\sup_i|\omega_i-\omega_i'|}{\rho_0}(\omega_i)
%\] 
%and $g(\omega_i')\le e^{D|\omega_i-\omega_i'|}g(\omega_i)$. Therefore, if   $a\ge \frac{D}{1-\sigma^{-1}}$ then ${\rho_0}\in\mc V_{a}$.
%\end{proof}

 \begin{proposition}\label{Prop:ControlHDist}
Assume $\rho_0\in\mc V_a$ for some  $a\ge a_0$ and that ${\mu_0}$ has Lipschitz disintegration. Then the disintegration of ${\mu_1}$ given in  \eqref{Eq:ExpMuomega} is Lipschitz  and 
 \[
\Lip \left({\mu_1}\right)\le \sigma^{-1}L \Lip\left({\mu_0}\right)+ \left[C_a+\sigma^{-1}L\right]
 \]
where $C_a:=e^{(a+\sigma^{-1}a+D)C_1}$ and $C_1$ is the diameter of $\T^{m_1}$.
 \end{proposition}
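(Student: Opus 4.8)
The plan is to bound $d_W((\mu_1)_\omega,(\mu_1)_{\omega'})$ starting from the explicit formula \eqref{Eq:ExpMuomega}, exactly as in the distortion-free case (Proposition \ref{Prop:LipNodist}), but now carefully tracking the extra factors $\rho_0(\omega_i)/|\D g_{\omega_i}|$ and the normalization $1/\rho_1(\omega)$. Writing $\omega_i:=h_i(\omega)$ and $\omega_i':=h_i(\omega')$, I would first observe that by Lemma \ref{Lem:Cones}(i) we have $\rho_1\in\mc V_a$ (since $a\ge a_0$), so $\rho_1(\omega)$ and $\rho_1(\omega')$ are comparable up to $e^{a|\omega-\omega'|}$; likewise $\rho_0\in\mc V_a$ controls $\rho_0(\omega_i)/\rho_0(\omega_i')$ by $e^{a|\omega_i-\omega_i'|}\le e^{\sigma^{-1}a|\omega-\omega'|}$, and the distortion hypothesis \eqref{As:0.2} controls $|\D g_{\omega_i'}|/|\D g_{\omega_i}|$ by $e^{D|\omega-\omega'|}$. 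Since the unnormalized weights $w_i(\omega):=\rho_0(\omega_i)/|\D g_{\omega_i}|$ sum (over $i$) to $\rho_1(\omega)$, the normalized coefficients $p_i(\omega):=w_i(\omega)/\rho_1(\omega)$ form a probability vector for each $\omega$, and the above estimates give $|p_i(\omega)-p_i(\omega')|$-type control; more usefully, $p_i(\omega)/p_i(\omega')\le e^{(a+\sigma^{-1}a+D)|\omega-\omega'|}$, hence $\sum_i |p_i(\omega)-p_i(\omega')| \le (e^{(a+\sigma^{-1}a+D)C_1}-1)\,$ type bounds — this is where $C_a:=e^{(a+\sigma^{-1}a+D)C_1}$ enters.

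Next I would split the difference of the two convex combinations of measures into two groups of terms. Using the decomposition
\[
(\mu_1)_\omega-(\mu_1)_{\omega'}=\sum_i p_i(\omega)\big(f_{\omega_i*}\mu_{0,\omega_i}-f_{\omega_i'*}\mu_{0,\omega_i'}\big)+\sum_i \big(p_i(\omega)-p_i(\omega')\big)f_{\omega_i'*}\mu_{0,\omega_i'},
\]
I would estimate each piece in the $\|\cdot\|_W$ norm. For the first sum, since the $p_i(\omega)$ are nonnegative and sum to $1$, and using the triangle inequality $d_W(f_{\omega_i*}\mu_{0,\omega_i},f_{\omega_i'*}\mu_{0,\omega_i'})\le d_W(f_{\omega_i*}\mu_{0,\omega_i},f_{\omega_i'*}\mu_{0,\omega_i})+d_W(f_{\omega_i'*}\mu_{0,\omega_i},f_{\omega_i'*}\mu_{0,\omega_i'})$ together with the $\omega$-Lipschitz bound $L|\omega_i-\omega_i'|\le \sigma^{-1}L|\omega-\omega'|$ on $f$ and the pushforward-Lipschitz bound $d_W(f_{\omega_i'*}\mu_{0,\omega_i},f_{\omega_i'*}\mu_{0,\omega_i'})\le L\,d_W(\mu_{0,\omega_i},\mu_{0,\omega_i'})\le L\Lip(\mu_0)\sigma^{-1}|\omega-\omega'|$ (Lemma \ref{Lem:LipcConstPushForw}), exactly as in Proposition \ref{Prop:LipNodist}, this contributes $\sigma^{-1}L(\Lip(\mu_0)+1)|\omega-\omega'|$. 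For the second sum, each $f_{\omega_i'*}\mu_{0,\omega_i'}$ is a probability measure (so has $\|\cdot\|$-"mass" bounded, and $\|\nu\|_W\le \diam(\T^{m_2})$ for any signed measure of total mass zero — but here masses need not cancel termwise), so I would bound $\big\|\sum_i(p_i(\omega)-p_i(\omega'))f_{\omega_i'*}\mu_{0,\omega_i'}\big\|_W$ by first subtracting a common reference measure to exploit that $\sum_i(p_i(\omega)-p_i(\omega'))=0$, reducing to $\le \diam(\T^{m_2})\sum_i|p_i(\omega)-p_i(\omega')|$, and then bounding $\sum_i|p_i(\omega)-p_i(\omega')|$ linearly in $|\omega-\omega'|$ using the log-Lipschitz control on the $p_i$ — this is the term producing the $C_a$ contribution.

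Collecting the two contributions gives $d_W((\mu_1)_\omega,(\mu_1)_{\omega'})\le \big(\sigma^{-1}L\Lip(\mu_0)+C_a+\sigma^{-1}L\big)|\omega-\omega'|$, which is the claimed bound; Lipschitz-ness of $\omega\mapsto(\mu_1)_\omega$ (hence that this is genuinely \emph{the} disintegration, by uniqueness from completeness of $(\mc M_1(\T^{m_2}),d_W)$) follows immediately. The main obstacle I anticipate is the second sum: getting the clean linear-in-$|\omega-\omega'|$ bound $\sum_i|p_i(\omega)-p_i(\omega')|\lesssim C_a|\omega-\omega'|$ with the stated constant requires handling the normalization $1/\rho_1$ carefully — one must combine the log-Lipschitz bounds on the numerators $w_i$ and on $\rho_1$, use $|e^{t}-1|\le C_1 e^{C_1}\,t/C_1$-type linearization valid because $|\omega-\omega'|\le C_1=\diam(\T^{m_1})$ is bounded, and keep track of the fact that the $C_1$ in the exponent of $C_a$ comes precisely from replacing $|e^{(a+\sigma^{-1}a+D)|\omega-\omega'|}-1|$ by $(e^{(a+\sigma^{-1}a+D)C_1})|\omega-\omega'|$ up to the harmless factor. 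Everything else is a direct adaptation of the no-distortion argument.
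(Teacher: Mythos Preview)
Your proposal is correct and follows essentially the same route as the paper: the paper makes the identical add-and-subtract decomposition (with the roles of primed and unprimed variables swapped), bounds the ``measure-change'' piece exactly as you do via Lemma~\ref{Lem:LipcConstPushForw} and the triangle inequality to get $\sigma^{-1}L[1+\Lip(\mu_0)]|\omega-\omega'|$, and handles the ``weight-change'' piece by the same log-Lipschitz ratio control $p_i(\omega)/p_i(\omega')\le e^{(a+\sigma^{-1}a+D)|\omega-\omega'|}$ (combining $\rho_0\in\mc V_a$, $\rho_1\in\mc V_a$, and \eqref{As:0.2}) together with the linearization $|e^t-1|\le e^{(a+\sigma^{-1}a+D)C_1}|\omega-\omega'|$ that you anticipate. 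The only cosmetic difference is that the paper bounds the weight-change term directly against $|\phi|$ (taking test functions bounded after centering) rather than subtracting a reference measure, but this is the same estimate.
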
 
 \begin{proof}
The proof is analogous to that of Proposition \ref{Prop:LipNodist}, although one has to work with \eqref{Eq:ExpMuomega}, rather then the simpler formula \eqref{Eq:FormDisintNodist}. 
 \begin{align*}
d_W(&\mu_{1,\omega}, \mu_{1,\omega'})=\\
 &=\sup_{\phi\in\Lip^1}\int \phi d\left(\sum_{i=1}^d\frac{1}{{\rho_1}(\omega)}\frac{{\rho_0}(\omega_i)}{|\D g_{\omega_i}|}f_{\omega_i*}{\mu_{0,\omega_i}}-\frac{1}{{\rho_1}(\omega')}\frac{{\rho_0}(\omega_i')}{|\D g_{\omega_i'}|}f_{\omega_i'*}{\mu_{0,\omega_i'}}\right)\\
 &\le\sup_{\phi\in\Lip^1}\int \phi d\left(\sum_{i=1}^d\frac{1}{{\rho_1}(\omega)}\frac{{\rho_0}(\omega_i)}{|\D g_{\omega_i}|}f_{\omega_i*}{\mu_{0,\omega_i}}-\frac{1}{{\rho_1}(\omega')}\frac{{\rho_0}(\omega_i')}{|\D g_{\omega_i'}|}f_{\omega_i*}{\mu_{0,\omega_i}}\right)+\\
% &\quad\quad+\sup_{\phi\in\Lip^1}\sum_{i=1}^d\int d\left(\frac{1}{\rho_1(\omega)}\frac{\rho_0(\omega_i)}{|\D g_{\omega_i}|}f_{\omega_i*}\mu_{\omega_i}-\frac{1}{\rho_1(\omega)}\frac{\rho_0(\omega_i)}{|\D g_{\omega_i}|}f_{\omega_i'*}\mu_{\omega_i}\right)\phi+\\
 &\quad\quad +\sup_{\phi\in\Lip^1}\sum_{i=1}^d\int \phi d\left(\frac{1}{{\rho_1}(\omega')}\frac{{\rho_0}(\omega_i')}{|\D g_{\omega_i'}|}f_{\omega_i*}{\mu_{0,\omega_i}}-\frac{1}{{\rho_1}(\omega')}\frac{{\rho_0}(\omega_i')}{|\D g_{\omega_i'}|}f_{\omega_i'*}{\mu_{0,\omega_i'}}\right)\\
 &=: A+B
 \end{align*}
where to get the inequality we added and subtracted the same quantity and distributed the $\sup$.

\emph{ Upper bound for $A$.} To bound the first term
\begin{align*}
A&%=\sup_{\phi\in\Lip^1}\int \phi d\left(\sum_{i=1}^d\frac{1}{\rho_1(\omega)}\frac{\rho_0(\omega_i)}{|\D g_{\omega_i}|}f_{\omega_i*}{\mu_{0,\omega_i}}-\frac{1}{\rho_1(\omega')}\frac{\rho_0(\omega_i')}{|\D g_{\omega_i'}|}f_{\omega_i*}{\mu_{0,\omega_i}}\right)\\
=\sup_{\phi\in\Lip^1}\sum_{i=1}^d\frac{1}{\rho_1(\omega)}\frac{\rho_0(\omega_i)}{|\D g_{\omega_i}|}\left(1-\frac{\frac{1}{\rho_1(\omega')}\frac{\rho_0(\omega_i')}{|\D g_{\omega_i'}|}}{\frac{1}{\rho_1(\omega)}\frac{\rho_0(\omega_i)}{|\D g_{\omega_i}|}}\right)\int\phi d({f_{\omega_i}}_*{\mu_{0,\omega_i}})\\
&\leq \frac{1}{\rho_1(\omega)}\sum_{i=1}^d\frac{\rho_0(\omega_i)}{|\D g_{\omega_i}|}\left|1-e^{[a+\sigma^{-1}a+D]|\omega-\omega'|}\right|\\
&\le e^{[a+\sigma^{-1}a+D]C_1} |\omega-\omega'|.
\end{align*}
where $C_1>0$ is the diameter of $\T^{m_1}$. To estimate the ratio in parenthesis, we used that $\rho_0\in \mc V_a$ with $a\ge a_0$ implies ${\rho_1}\in\mc V_a$,   the fact that $|\phi|\le 1$, and \eqref{As:0.2}.

\emph{Upper bound for $B$.} The second term can be bounded by
 \begin{align*}
 d_W&\left(\sum_{i=1}^d\frac{1}{{\rho_1}(\omega')}\frac{{\rho_0}(\omega_i')}{|\D g_{\omega_i'}|}f_{\omega_i*}{\mu_{0,\omega_i}}, \sum_{i=1}^d\frac{1}{{\rho_1}(\omega')}\frac{{\rho_0}(\omega_i')}{|\D g_{\omega_i'}|}f_{\omega_i'*}{\mu_{0,\omega_i'}} \right)\leq\\
 &\quad\quad\quad\leq \max_id_W(f_{\omega_i*}{\mu_{0,\omega_i}},f_{\omega_i'*}{\mu_{0,\omega_i'}}),
 \end{align*}
 where we used that $\sum_{i=1}^d\frac{1}{{\rho_1}(\omega')}\frac{{\rho_0}(\omega_i')}{|\D g_{\omega_i'}|}=1$ and Lemma \ref{Lem:ConvexComb} about the Wasserstein distance between convex combinations of measures.
 
 By triangular inequality, 
 \[
 d_W({f_{\omega_i}}_*{\mu_{0,\omega_i}},{f_{\omega_i'}}_*{\mu_{0,\omega_i'}})\leq d_W({f_{\omega_i}}_*{\mu_{0,\omega_i}},{f_{\omega_i'}}_*{\mu_{0,\omega_i}})+ d_W({f_{\omega_i'}}_*{\mu_{0,\omega_i}},{f_{\omega_i'}}_*{\mu_{0,\omega_i'}}).
 \]
For the first term, pick any $\phi\in\Lip ^1$ and any probability measure $\xi$ 
\begin{align*}
\int \phi d(f_{\omega_i*}\xi-f_{\omega_i'*}\xi)&=\int (\phi\circ f_{\omega_i}-\phi\circ f_{\omega_i'})d\xi\\
&\le L|\omega_i-\omega_i'| \le L\sigma^{-1}|\omega-\omega'|.
\end{align*}
So
\[
d_W(f_{\omega_i*}{\mu_0}_{\omega_i},f_{\omega_i'*}{\mu_0}_{\omega_i})\leq L\sigma^{-1}|\omega-\omega'|.
\]
For the second term, using the fact that for every $\omega\in\T^{m_1}$, $\Lip(f_{\omega*})=L$,
\begin{align*}
 d_W(f_{\omega_i'*}{\mu_0}_{\omega_i},f_{\omega_i'*}{\mu_0}_{\omega_i'})&\le Ld_W({\mu_0}_{\omega_i},{\mu_0}_{\omega_i'}) \\
&\le L\Lip({\mu_0}) |\omega_i-\omega_i'|\\
&\le L\Lip({\mu_0})\sigma^{-1}\;|\omega-\omega'|,
\end{align*}
 which implies 
  \[
B\leq \sigma^{-1}L\left[1+\Lip({\mu_0}) \right]\;|\omega-\omega'|.
  \]
 
 Putting together the estimates for $A$ and $B$
 \[
 \Lip \left({\mu_1}\right)\le \sigma^{-1}L \Lip\left({\mu_0}\right)+ \left[C_a+\sigma^{-1}L\right]
 \]
 \end{proof}
 
  As a corollary to the previous proposition we obtain the existence of an invariant class of measures with  Lipschitz disintegration and with uniformly bounded Lipschitz constant. More precisely, let's define  $\Gamma_{\ell,a}$ the set of  probability measures that have a Lipschitz disintegration with constant at most $\ell$ and  horizontal marginal with density  in $\mc V_a$:
 \[
 \Gamma_{\ell,a}:=\left\{\mu\in\mc M_{1}(\T^{m_1}\times\T^{m_2}):\,\,\Lip\left(\mu \right)\le \ell\,\mbox{ and }\,\frac{d\pi_{1*}\mu}{d\Leb_{\T^{m_1}}}{\in \mc V_a}\right\}.
 \]

 \begin{corollary}\label{Cor:InvariantClasses}
 
 \
 \begin{itemize}
 \item[i)]If $\rho\in \mc V_a$ with $a\le a_0:= \frac{D}{1-\sigma^{-1}}$, $\sigma>L$, then $F_*(\Gamma_{\ell,a})\subset \Gamma_{\ell,a_0}$ for every $\ell\ge \ell_0$ with 
 \begin{equation}\label{Eq:DefEll0}
 \ell_0:=\frac{\sigma^{-1}L+C_{a_0}(1+D)}{\left(1-\sigma^{-1}L\right)}.
 \end{equation}

 \item[ii)] If there are $a>0$ and $\ell>0$ such that  $\mu\in\Gamma_{\ell,a}$, then for every ${\delta}>0$ there is $N\in\N$ such that 
 \[
 \Lip(F_*^n\mu)\le \ell_0+{\delta}
 \]
 for all $n>N$.
 \end{itemize}
 \end{corollary}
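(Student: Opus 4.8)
The plan is to obtain both parts from Proposition~\ref{Prop:ControlHDist} together with Lemma~\ref{Lem:Cones}(i) by a one-step affine fixed-point argument, using that $\sigma>L$ forces the contraction factor $\sigma^{-1}L<1$. For i): since $a\le a_0$ we have $\mc V_a\subseteq\mc V_{a_0}$, so the density $\rho_0:=\tfrac{d\pi_{1*}\mu}{d\Leb_{\T^{m_1}}}$ lies in $\mc V_{a_0}$, and since $\sigma^{-1}a_0+D=a_0$ Lemma~\ref{Lem:Cones}(i) gives that the density $\rho_1$ of the horizontal marginal of $F_*\mu$ is again in $\mc V_{a_0}$. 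Applying Proposition~\ref{Prop:ControlHDist} with exponent $a_0$ (legitimate because $\mu$ has a Lipschitz disintegration and $\rho_0\in\mc V_{a_0}$, $a_0\ge a_0$) shows $F_*\mu$ has a Lipschitz disintegration with $\Lip(F_*\mu)\le\sigma^{-1}L\,\Lip(\mu)+K_0$, where the additive constant from that proposition is bounded by $K_0:=\sigma^{-1}L+C_{a_0}(1+D)$ (the $1+D$ being the clean bound one gets when one keeps the factor multiplying $C_{a_0}$ in the estimate for the term $A$; any valid bound for the additive term works, since the argument only needs $K_0$ to dominate it). As $\sigma^{-1}L<1$, the affine map $t\mapsto\sigma^{-1}Lt+K_0$ has unique fixed point $\ell_0=K_0/(1-\sigma^{-1}L)$, matching \eqref{Eq:DefEll0}, and maps $[0,\ell]$ into itself for every $\ell\ge\ell_0$; hence $\Lip(\mu)\le\ell$ implies $\Lip(F_*\mu)\le\ell$. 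Combined with $\rho_1\in\mc V_{a_0}$ this is precisely $F_*\mu\in\Gamma_{\ell,a_0}$, proving i).

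For ii) one iterates, tracking that the horizontal marginal is only in $\mc V_a$ for some $a$ possibly different from $a_0$. Iterating Lemma~\ref{Lem:Cones}(i), the density of the horizontal marginal of $F_*^n\mu$ lies in $\mc V_{\alpha_n}$ with $\alpha_0=a$ and $\alpha_{n+1}=\sigma^{-1}\alpha_n+D$; since $x\mapsto\sigma^{-1}x+D$ is an affine contraction with fixed point $a_0$, $\alpha_n\to a_0$ monotonically. Fix $\delta>0$. By continuity of $a\mapsto C_a=e^{(a+\sigma^{-1}a+D)C_1}$ at $a_0$, choose $\delta'>0$ so small that $\tfrac{\sigma^{-1}L+C_{a_0+\delta'}(1+D)}{1-\sigma^{-1}L}\le\ell_0+\tfrac{\delta}{2}$, and pick $N_1$ with $\alpha_n\le a_0+\delta'$ for all $n\ge N_1$ (one may take $N_1=0$ when $a\le a_0$). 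An easy induction using Proposition~\ref{Prop:ControlHDist} shows each $F_*^n\mu$ has a Lipschitz disintegration with $\Lip(F_*^n\mu)<\infty$; moreover for $n\ge N_1$ one has $\rho_n\in\mc V_{\alpha_n}\subseteq\mc V_{a_0+\delta'}$, so applying that proposition with exponent $a_0+\delta'$ gives $\Lip(F_*^{n+1}\mu)\le\sigma^{-1}L\,\Lip(F_*^n\mu)+K_0'$ with $K_0':=\sigma^{-1}L+C_{a_0+\delta'}(1+D)$, uniformly in $n\ge N_1$. Iterating this recursion from time $N_1$ yields $\Lip(F_*^n\mu)\le(\sigma^{-1}L)^{n-N_1}\Lip(F_*^{N_1}\mu)+\tfrac{K_0'}{1-\sigma^{-1}L}$ for $n\ge N_1$, hence $\limsup_{n\to\infty}\Lip(F_*^n\mu)\le\tfrac{K_0'}{1-\sigma^{-1}L}\le\ell_0+\tfrac{\delta}{2}$, so $\Lip(F_*^n\mu)\le\ell_0+\delta$ for all $n$ past some $N$, which is the assertion.

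I expect the only genuine subtlety to lie in the bookkeeping of the two relaxations in ii): the horizontal density can only be forced into the invariant cone $\mc V_{a_0}$ up to an arbitrarily small overshoot $\delta'$ in finitely many steps, and on top of that the disintegration's Lipschitz constant relaxes toward $\ell_0$ only at geometric rate $\sigma^{-1}L$. One must make sure the finite—but possibly very large—transient $\Lip(F_*^{N_1}\mu)$ is annihilated by the factor $(\sigma^{-1}L)^{n-N_1}$, and that the residual $\tfrac{K_0'}{1-\sigma^{-1}L}$ can be brought within $\delta/2$ of $\ell_0$ purely by shrinking $\delta'$, which is exactly what continuity of $a\mapsto C_a$ at $a_0$ delivers. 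Everything else—the explicit form of $\ell_0$ in \eqref{Eq:DefEll0}, the finiteness of all intermediate Lipschitz constants, the inclusions between cones $\mc V_a$—is routine once Proposition~\ref{Prop:ControlHDist} and Lemma~\ref{Lem:Cones} are in hand.
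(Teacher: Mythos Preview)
Your proof is correct and follows essentially the same approach as the paper: for i) you use the inclusion $\mc V_a\subseteq\mc V_{a_0}$, Lemma~\ref{Lem:Cones}(i), and the affine fixed-point argument from Proposition~\ref{Prop:ControlHDist}; for ii) you track the two relaxations $\alpha_n\to a_0$ and $\Lip(F_*^n\mu)\to\ell_0$. The paper's own proof is terser (especially for ii), where it just says ``$a_n\to a_0$, $\ell_n\to\ell_0$, the claim follows easily''), while you actually carry out the $\delta/2$-splitting via continuity of $a\mapsto C_a$ and the geometric decay of the transient; this is exactly the content the paper leaves implicit. Your remark on the $(1+D)$ factor in \eqref{Eq:DefEll0} versus the additive constant $C_{a_0}+\sigma^{-1}L$ coming from Proposition~\ref{Prop:ControlHDist} is apt: any upper bound for the additive term yields a valid $\ell_0$, so the argument is unaffected.
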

 \begin{proof}

To prove i), recall that the horizontal marginal of $F_*\mu$ is the push-forward under $g$ of the horizontal marginal of $\mu$. Since the horizontal marginal of $\mu$ has density in $\mc V_{a_0}$, by the inclusion $\mc V_a\subset \mc V_{a_0}$ and  Lemma \ref{Lem:Cones}, the horizontal  marginal of $F_*\mu$ belongs to $\mc V_{a_0}$. By Proposition \ref{Prop:ControlHDist} and the choice of $\ell_0$, it follows that if $\Lip(\mu)\le \ell_0$ then also $\Lip(F_*\mu)\le \ell_0$.
 
 For point ii) notice that  the horizontal marginal of $F_*^n\mu$ belongs to $\Gamma_{\ell_n,a_n}$ for some $a_n$ and $\ell_n$  such that  $a_n\rightarrow a_0$ as $n\rightarrow \infty$ by  Lemma \ref{Lem:Cones},   and $\ell_n\rightarrow \ell_0$ as $n\rightarrow \infty$ by  Proposition \ref{Prop:ControlHDist}. The claim follows easily.
 \end{proof}
 
\subsection{Tracking the evolution of the vertical marginal.}\label{Sec:TrackingDistVertMarg}
Let's consider $\mc M_{1,\nu_g}(\T^{m_1}\times \T^{m_2})$ the set of Borel probability measures on $\T^{m_1}\times \T^{m_2}$ having horizontal marginal equal to $\nu_g$,  the invariant measure for $g$, and recall that for $\mu \in \mc M_{1,\nu_g}(\T^{m_1}\times \T^{m_2})$, the vertical marginal is given by 
\[
 \Pi \mu = \int_{\T^{m_2}} d\omega \rho_g (\omega) \mu_\omega.
\]

For every $i=1,...,d$, call
\[
{\bar\rho_i}:=\nu_g(I_i)=\int_{I_i}d\omega\rho_g(\omega)
\]
the measure of $I_i$ with respect to the invariant measure of $g$.  Define the map ${\bo \Delta}:\mc M_{1,\nu_g}(\T^{m_1}\times\T^{m_2})\rightarrow (\mc M_1(\T^{m_2}))^d$ in the following way
\[
({\bo \Delta} \mu)_i:= \bar{\rho}_i^{-1}\int_{I_i}d\omega\rho_g(\omega)\mu_\omega,
\]
i.e. $({\bo \Delta} \mu)_i$ is the average of the disintegration $\{\mu_\omega\}_{\omega\in\T^{m_1}}$ on $I_i$ with respect to the invariant measure of $g$. The map $\bo\Delta$ gives a decomposition of $\mu$ which can be viewed as a \emph{coarse-graining} of the disintegration of $\{\mu_\omega\}_{\omega\in\T^{m_1}}$. Moreover, 
for any $\mu\in \mc M_{1,\nu_g}(\T^{m_1}\times\T^{m_2})$
\[
\Pi\mu=\sum_{i=1}^d{\bar\rho_i}({\bo \Delta} \mu)_i,
\]
therefore, by  keeping track of ${\bo \Delta}(F_*^n\mu)$, we can keep track of $\Pi F_*^n\mu$. 

Consider also
\begin{equation}
\mc F_i:={\bar\rho_i}^{-1}\int_{I_i} d\omega\rho_g(\omega) f_{\omega*},
\end{equation}
which is the average of the operators $\{f_{\omega*}\}_{\omega\in \T^{m_1}}$ on $I_i$ w.r.t. $\nu_g$ restricted to $I_i$ and normalized. A lemma below shows that $\mc F_i$ is an approximation of $f_{\omega*}$ for $\omega\in I_i$. The smaller is the size of $I_i$, i.e. the larger is $\sigma>0$, the better is the approximation.

For every $1\le i,j \le d$, define the operators
\begin{equation}\label{Eq:fi}
\mc L_{ij}:={\bar\rho_i}^{-1}\left(\int_{I_i}d\omega\frac{\rho_g(\omega_j)}{|\D g_{\omega_j}|}\right)\mc F_{j }; 
\end{equation}
 and consider the operator ${\bo {\mc L}}:(\mc M_1(\T^{m_2}))^d\rightarrow (\mc M_1(\T^{m_2}))^d$
\begin{equation}\label{Eq:OpL}
({\bo {\mc L}}\bo\mu)_i=\sum_{j=1}^d \mc L_{ij}(\bo \mu)_j.
%\mc L\left(\begin{array}{c}
%\mu_1\\ \mu_2 \\ ...\\ \mu_d
%\end{array}\right):=\left(
%\begin{array}{cccc}
%P_{11}\mc L_{1*}& P_{12}\mc L_{2*}&...& P_{1d}\mc L_{d*}\\
%P_{21}\mc L_{1*}& P_{22}\mc L_{2*}&...& P_{2d}\mc L_{d*}\\    
%...&...&...&...\\
%P_{d1}\mc L_{1*}& P_{d2}\mc L_{2*}&...&P_{dd}\mc L_{d*}
%\end{array}
%\right)\left(\begin{array}{c}
%\mu_1\\ \mu_2 \\ ...\\ \mu_d
%\end{array}\right)
\end{equation}

\begin{remark}
Before moving on, let us stress why the above mappings ${\bo \Delta}$ and ${\bo {\mc L}}$ are important: ${\bo \Delta}(F_*\mu)$ and  ${\bo {\mc L}}({\bo \Delta} \mu)$ are very close when the expansion of $g$ is very large. This will let us prove that for fixed $n$, we can approximate  ${\bo \Delta}(F^n_*\mu)$ with ${\bo {\mc L}}^n({\bo \Delta} \mu)$ when the expansion of $g$ is sufficiently large, with the advantage that $\bo{\mc L}$ has good contraction properties.
\end{remark}

The remark above is formalised in the following propositions. For $\bo \mu_1,\bo\mu_2\in(\mc M_1(\T^{m_2}))^d$ we define
\[
d_W(\bo\mu_1,\bo\mu_2)=\max_{i=1, \ldots, d}d_W((\bo \mu_{1})_i,(\bo\mu_{2})_i).
\]
\begin{proposition}\label{Prop:ApproximDisitnMathcalL}
If $\mu\in  \Gamma_{\ell_0,a_0}$, with $\ell_0$ and $a_0$ as in Corollary \ref{Cor:InvariantClasses}, then there is a constant $K_\#>0$ uniform in $\sigma$, and $C_3:(1,+\infty)\rightarrow \R^+$ decreasing such that 
\begin{equation}\label{Eq:ApproximDisitnMathcalL}
d_W({\bo \Delta}(F_*^n\mu), {\bo {\mc L}}^n({\bo \Delta}\mu))< K_\# L^{n+1}\left(\,\sigma^{-1}+C_3(\sigma)\|\rho_0-\rho_g\|_{\infty}\,\right) 
\end{equation}
where $\rho_0$ is the density of the horizontal marginal of $\mu$.
\end{proposition}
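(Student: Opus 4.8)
The plan is to reduce the estimate to a single-step bound $d_W(\bo\Delta(F_*\mu),\bo{\mc L}(\bo\Delta\mu))\le K_\#L\,(\sigma^{-1}+C_3(\sigma)\|\rho_0-\rho_g\|_\infty)$ for $\mu\in\Gamma_{\ell_0,a_0}$, and then to iterate. For the iteration one writes the familiar telescoping decomposition
\[
\bo\Delta(F_*^n\mu)-\bo{\mc L}^n(\bo\Delta\mu)=\sum_{k=0}^{n-1}\bo{\mc L}^{\,k}\bigl(\bo\Delta(F_*^{n-k}\mu)-\bo{\mc L}(\bo\Delta(F_*^{n-k-1}\mu))\bigr),
\]
using that $\bo\Delta\circ F_* = $ (one step) applied to $F_*^{n-k-1}\mu$, which lies in $\Gamma_{\ell_0,a_0}$ by Corollary \ref{Cor:InvariantClasses}(i). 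Since each $\mc F_j$, and hence each $\mc L_{ij}$, is an averaged push-forward and $\Lip(f_{\omega*})=L$, the operator $\bo{\mc L}$ is $L$-Lipschitz for $d_W$ (it is a convex combination of $L$-Lipschitz push-forwards — apply Lemma \ref{Lem:ConvexComb} and Lemma \ref{Lem:LipcConstPushForw}). Therefore $\|\bo{\mc L}^k\|_{\mathrm{Lip}}\le L^k$, and summing the single-step bound over $k$ with weights $L^k$ produces the factor $L^{n}\cdot L=L^{n+1}$ up to the geometric sum $\sum_{k\le n}L^{k}$, which I would absorb into a redefinition of $K_\#$ (or, if $L<1$, it is just bounded; if $L\ge1$ the factor $L^n$ already dominates and one keeps $L^{n+1}$). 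A small point: the density of the horizontal marginal of $F_*^{n-k-1}\mu$ is not $\rho_g$ but $\rho_{n-k-1}$, so the single-step bound applied at level $n-k-1$ carries $\|\rho_{n-k-1}-\rho_g\|_\infty$; by Lemma \ref{Lem:Cones}(ii) this is $\le C_g\lambda_g^{\,n-k-1}\le\|\rho_0-\rho_g\|_\infty$-comparable after adjusting constants (or one simply bounds $\|\rho_{n-k-1}-\rho_g\|_\infty\le \|\rho_0-\rho_g\|_\infty$ up to the contraction, and folds the resulting convergent series into $C_3$).

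The heart of the matter is the one-step estimate. Here I compare, for each branch index $i$,
\[
(\bo\Delta(F_*\mu))_i=\bar\rho_i^{-1}\int_{I_i}\!d\omega\,\rho_g(\omega)\,\mu_{1,\omega}
\quad\text{with}\quad
(\bo{\mc L}(\bo\Delta\mu))_i=\sum_{j}\bar\rho_i^{-1}\Bigl(\int_{I_i}\!d\omega\,\tfrac{\rho_g(\omega_j)}{|\D g_{\omega_j}|}\Bigr)\mc F_j\,(\bo\Delta\mu)_j,
\]
and I substitute the explicit formula \eqref{Eq:ExpMuomega} for $\mu_{1,\omega}$ — caution: \eqref{Eq:ExpMuomega} uses $\rho_0$ and $\rho_1$, so $(\bo\Delta(F_*\mu))_i=\bar\rho_i^{-1}\int_{I_i}d\omega\,\frac{\rho_g(\omega)}{\rho_1(\omega)}\sum_j\frac{\rho_0(\omega_j)}{|\D g_{\omega_j}|}f_{\omega_j*}\mu_{0,\omega_j}$. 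The discrepancy between this and the $\bo{\mc L}$-expression splits into three error sources, each of which I bound using that $\diam(I_i)\le C_1/\sigma$ (so all quantities vary by $O(\sigma^{-1})$ across $I_i$): (a) replacing $\rho_g(\omega)/\rho_1(\omega)$ by $1$ — here $\|\rho_1-\rho_g\|_\infty\le \tilde C(\sigma)\|\rho_0-\rho_g\|_\infty$ by the contraction in Lemma \ref{Lem:Cones}, and $\rho_g,\rho_1$ are bounded below by a positive constant, contributing the $C_3(\sigma)\|\rho_0-\rho_g\|_\infty$ term; (b) replacing the $\omega$-dependent weights $\frac{\rho_0(\omega_j)}{|\D g_{\omega_j}|}$ inside the $I_i$-integral by constants so as to match the definition of $\mc L_{ij}$ and $\mc F_j$ — the variation over $I_i$ is $O(\sigma^{-1})$ by \eqref{As:0.2} and log-Lipschitz regularity of $\rho_0\in\mc V_{a_0}$; (c) replacing $f_{\omega_j*}\mu_{0,\omega_j}$ by $\mc F_j(\bo\Delta\mu)_j$, which is exactly the ``$\mc F_i$ approximates $f_{\omega*}$'' lemma alluded to before \eqref{Eq:fi}: the $d_W$-distance between $f_{\omega_j*}\mu_{0,\omega_j}$ and $\mc F_j\,\bar\rho_j^{-1}\!\int_{I_j}\rho_g\mu_{0,\cdot}$ is controlled by $L\cdot\diam(I_j)\le LC_1\sigma^{-1}$ (moving the base point) plus $L\cdot\Lip(\mu_0)\cdot\diam(I_j)\le L\ell_0C_1\sigma^{-1}$ (moving the fiber measure, using $\mu\in\Gamma_{\ell_0,a_0}$). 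Summing (a)–(c), using $\sum_j\bar\rho_i^{-1}\int_{I_i}\frac{\rho_g(\omega_j)}{|\D g_{\omega_j}|}\,d\omega=1$ and that $d_W$ of convex combinations is bounded by the max (Lemma \ref{Lem:ConvexComb}), gives the claimed single-step bound with $K_\#$ depending only on $C_1,\ell_0,a_0,D$ and the lower bound on $\rho_g$ — all uniform in $\sigma$ once $\sigma>\sigma_0$ — and $C_3$ decreasing because the constants from Lemma \ref{Lem:Cones}(ii) are.

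I expect the main obstacle to be bookkeeping rather than a conceptual difficulty: one must carefully handle that the disintegration formula \eqref{Eq:ExpMuomega} is written with $\rho_0,\rho_1$ while $\bo\Delta$ and $\bo{\mc L}$ are defined with $\rho_g$, so the error genuinely has both a ``$\sigma^{-1}$ part'' (geometric, from the small size of the $I_i$'s and the Lipschitz/distortion bounds) and a ``$\|\rho_0-\rho_g\|_\infty$ part'' (from the horizontal marginal not yet being stationary), and these must be disentangled cleanly. A secondary technical care point is ensuring all the constants multiplying $\sigma^{-1}$ (namely $C_{a_0}$, $\ell_0$, the lower bound on $\rho_g$, etc.) are indeed uniform in $\sigma\ge\sigma_0$, which is guaranteed by the Remark after Lemma \ref{Lem:Cones} and by the explicit form \eqref{Eq:DefEll0} of $\ell_0$; one should double-check that $\ell_0$ stays bounded as $\sigma\to\infty$ (it does, tending to $C_{a_0}(1+D)$), so that $K_\#$ can be taken independent of $\sigma$.
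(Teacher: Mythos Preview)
Your proposal is correct and follows the paper's approach: a single-step estimate (the paper splits into four pieces --- your (c) merges their third and fourth terms, and their term $B$ is the $\rho_0\to\rho_g$ replacement that you fold into (b)) followed by the same telescoping sum using that $\bo{\mc L}$ is $L$-Lipschitz in $d_W$. One small bookkeeping correction: step (b) is not purely $O(\sigma^{-1})$, since matching the coefficients in $\mc L_{ij}$ requires replacing $\rho_0$ by $\rho_g$ in the weights (the $\mc L_{ij}$ are defined with $\rho_g$), which contributes an additional $O(\|\rho_0-\rho_g\|_\infty)$ term on top of the $O(\|\rho_1-\rho_g\|_\infty)$ you correctly identify in (a) --- but this does not affect the final bound.
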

\begin{proof}
Let's call $\nu_0$ the horizontal marginal of $\mu$, and $\rho_0\in\mc V_{a_0}$ its density.  Let's denote by $\nu_n:=F_*^n\nu_0$ and by $\rho_n:=\frac{d\nu_n}{d\Leb}$.  By Lemma \ref{Lem:Cones},  $\rho_n\in\mc V_{a_0}$ for every $n\in\N_0$.

First, let's prove \eqref{Eq:ApproximDisitnMathcalL} for $n=1$. Recalling the disintegration \eqref{Eq:ExpMuomega},
\begin{align}
({\bo \Delta}(F_*\mu))_i&={\bar\rho_i}^{-1}\int_{I_i}d\omega \rho_g(\omega)(F_*\mu)_{\omega}\nonumber\\
&={\bar\rho_i}^{-1}\int_{I_i}d\omega \frac{\rho_g(\omega)}{\rho_1(\omega)}\sum_{j=1}^d\frac{\rho_0(\omega_j)}{|\D g_{\omega_j}|}f_{\omega_j*}\mu^{}_{\omega_j}\nonumber\\
&={\bar\rho_i}^{-1}\int_{I_i} d\omega\left(\frac{\rho_g(\omega)}{\rho_1(\omega)}-1\right)\sum_{j=1}^d\frac{\rho_0(\omega_j)}{|\D g_{\omega_j}|}f_{\omega_j*}\mu^{}_{\omega_j}+\label{Est:mathcalLHm1}\\
&\quad\quad+{\bar\rho_i}^{-1}\int_{I_i}d\omega\sum_{j=1}^d\frac{\rho_0(\omega_j)-\rho_g(\omega_j)}{|\D g_{\omega_j}|}f_{\omega_j*}\mu_{\omega_j}+\label{Est:mathcalLHm2}	\\
&\quad\quad+{\bar\rho_i}^{-1}\int_{I_i}d\omega \sum_{j=1}^d\frac{\rho_g(\omega_j)}{|\D g_{\omega_j}|}f_{\omega_j*}\mu_{\omega_j}\label{Est:mathcalLHm3}
\end{align}
where in the last equality we added and subtracted the same terms.  We denote by $A$ the term in \eqref{Est:mathcalLHm1} and by $B$ the term in \eqref{Est:mathcalLHm2}. With this notation,
\begin{align*}
d_W(({\bo \Delta}(F_*\mu))_i, ({\bo {\mc L}}{\bo \Delta}\mu)_i)&= \left \| A+B+ \sum_{j=1}^d{\bar\rho_i}^{-1}\int_{I_i}d\omega\frac{\rho_g(\omega_j)}{|\D g_{\omega_j}|}(f_{\omega_j*}-\mc F_j)\mu_{\omega_j}\right.+\\
&\quad\quad\quad+\left.\sum_{j=1}^d{\bar\rho_i}^{-1}\int_{I_i}d\omega\frac{\rho_g(\omega_j)}{|\D g_{\omega_j}|}\mc F_j(\mu_{\omega_j}-({\bo \Delta} \mu)_j)\right\|_W.
\end{align*}
Call $\Lip_0^1(\T^{m_2};\R)$, $\Lip_0^1$ for brevity, the set of Lipschitz functions  from $\T^{m_2}$ to $\R$ with zero integral. When compute the above $\|\cdot\|_W$, taking the supremum over $\Lip^1$ or $\Lip^1_0$ doesn't matter, as the integrals of $\phi$ and that of $\phi-\int\phi$ are the same. Notice that for $\phi\in\Lip_0^1$, $|\phi|\le C_2$, where $C_2$ is the diameter of $\T^{m_2}$.
\begin{align}
&d_W(({\bo \Delta}(F_*\mu))_i, ({\bo {\mc L}}{\bo \Delta}\mu)_i)=\nonumber\\
&\quad\quad\sup_{\phi\in\Lip^1_0}\int \phi d\left(  A+B+\sum_{j=1}^d{\bar\rho_i}^{-1}\int_{I_i}d\omega\frac{\rho_g(\omega_j)}{|\D g_{\omega_j}|}(f_{\omega_j*}-\mc F_j)\mu_{\omega_j}\right.\label{Eq:EstdWFL}\\
&\quad+\left.\sum_{j=1}^d{\bar\rho_i}^{-1}\int_{I_i}d\omega\frac{\rho_g(\omega_j)}{|\D g_{\omega_j}|}\mc F_j(\mu_{\omega_j}-({\bo \Delta} \mu)_j)\right).\label{Eq:EstdWFL2}
\end{align}

Let's call 
\[
\delta_n:=\|\rho_n-\rho_g\|_{\infty}=\sup_{\omega\in\T^{m_1}}|\rho_n(\omega)-\rho_g(\omega)|.
\]
Since $\rho_1\in\mc V_a$, $|\rho_1|\ge e^{-C_1D}$ where $C_1>0$ is the diameter of $\T^{m_1}$. Therefore
\begin{align*}
 \left|1-\frac{\rho_g(\omega)}{\rho_1(\omega)}\right|\le \frac{1}{\rho_1(\omega)}|\rho_g(\omega)-\rho_1(\omega)|\le e^{C_1D}\delta_1.
\end{align*}

Now we distribute the $\sup$ among the four terms on the RHS of \eqref{Eq:EstdWFL}, and estimate each of them separately.
\begin{align*}
\sup_{\phi\in\Lip^1_0}\int \phi dA&\le {\bar\rho_i}^{-1}\int_{I_i} d\omega\sum_{j=1}^d\frac{\rho_0(\omega_j)}{|\D g_{\omega_j}|}\,\,\sup_{\phi\in\Lip_0^1}\left|\int_{\T^{m_2}}\phi(x)d\left[\left(\frac{\rho_g(\omega)}{\rho_1(\omega)}-1\right)f_{\omega_j*}\mu^{}_{\omega_j}\right](x)\right|\\
%&\le \sup_{\phi\in\Lip_0^1}\int_{\T^{m_2}}\phi(x)d\left[\left(\frac{\rho_g(\omega)}{\rho_1(\omega)}-1\right)f_{\omega_j*}\mu^{}_{\omega_j}\right](x)\\
&\le  {\bar\rho_i}^{-1}\int_{I_i} d\omega\sum_{j=1}^d\frac{\rho_0(\omega_j)}{|\D g_{\omega_j}|}C_2e^{C_1D}\delta_1\\
&=C_2e^{C_1D}\delta_1{\bar\rho_i}^{-1}\int_{I_i}d\omega\rho_1(\omega)\\
&=C_2e^{C_1D}\delta_1\frac{\nu_1(I_i)}{\nu_g(I_i)}\\
&\le C_2e^{3C_1D}\delta_1. 
\end{align*}
Then for $B$
\begin{align*}
\sup_{\phi\in\Lip^1_0}\int \phi dB&=\sup_{\phi\in\Lip^1_0} \int_{\T^{m_2}}\phi(x)d\left[{\bar\rho_i}^{-1}\int_{I_i}\sum_{j=1}^d\frac{\rho_0(\omega_j)-\rho_g(\omega_j)}{|\D g_{\omega_j}|}f_{\omega_j*}\mu_{\omega_j}d\omega\right](x)\\
&\le {\bar\rho_i}^{-1}\int_{I_i}d\omega\sum_{j=1}^d\frac{1}{|\D g_{\omega_j}|}\sup_{\phi\in \Lip^1_0}\left|\int \phi(x)(\rho_0(\omega_j)-\rho_g(\omega_j)) df_{\omega_j*}\mu_{\omega_j}(x)\right|\\
&\le {\bar\rho_i}^{-1}\int_{I_i}d\omega\sum_{j=1}^d\frac{1}{|\D g_{\omega_j}|}C_2\delta_0\\
&=\frac{g_*\Leb_{\T^{m_1}}(I_i)}{\nu_g(I_i)}C_2\delta_0\\
&\le e^{2C_1D}C_2\delta_0
\end{align*}

For the third term in the big parenthesis of Eq. \eqref{Eq:EstdWFL}, using the definition of $\mc F_j$
%Now we work on  \eqref{Est:mathcalLHm3}.
%\begin{align*}
%{\bar\rho_i}^{-1}\int_{I_i} \sum_{j=1}^d\frac{\rho_g(\omega_j)}{|\D g_{\omega_j}|}f_{\omega_j*}\mu_{\omega_j}d\omega&= \sum_{j=1}^d{\bar\rho_i}^{-1}\int_{I_i}d\omega\frac{\rho_g(\omega_j)}{|\D g_{\omega_j}|}(f_{\omega_j*}-\mc F_j)\mu_{\omega_j}+\\
%&\quad+ \sum_{j=1}^d{\bar\rho_i}^{-1}\int_{I_i}d\omega\frac{\rho_g(\omega_j)}{|\D g_{\omega_j}|}\mc F_j(\mu_{\omega_j}-({\bo \Delta} \mu)_j)\\
%&\quad+\sum_{j=1}^d{\bar\rho_i}^{-1}\int_{I_i}d\omega\frac{\rho_g(\omega_j)}{|\D g_{\omega_j}|}\mc F_j({\bo \Delta} \mu)_j
%\end{align*}
%and the last summand in the above equation equals $\sum_j\mc L_{ij}({\bo \Delta}\mu)_j$.  
%For the first term, for any $\phi\in\Lip^1$
\begin{align*}
&\left|\int \phi(x)\sum_{j=1}^d{\bar\rho_i}^{-1}\int_{I_i}d\omega \frac{\rho_g(\omega_j)}{|\D g_{\omega_j}|}d(f_{\omega_j*}-\mc F_j)\mu_{\omega_j}(x)\right|=\\
&=\left| \sum_{j=1}^d\int_{I_j}{\bar\rho_j}^{-1}d\omega'{\bar\rho_i}^{-1}\int_{I_i}d\omega\frac{\rho_g(\omega_j)}{|\D g_{\omega_j}|}\int \phi(x)d(f_{\omega_j*}-f_{\omega'*})\mu_{\omega_j}(x)\right|\\
&=\left| \sum_{j=1}^d\int_{I_j}{\bar\rho_j}^{-1}d\omega'{\bar\rho_i}^{-1}\int_{I_i}d\omega\frac{\rho_g(\omega_j)}{|\D g_{\omega_j}|}\int(\phi\circ f_{\omega_j}(x)-\phi\circ f_{\omega'}(x))d\mu_{\omega_j}(x)\right|\\
&\le \sum_{j=1}^d\int_{I_j}{\bar\rho_j}^{-1}d\omega'{\bar\rho_i}^{-1}\int_{I_i}d\omega\frac{\rho_g(\omega_j)}{|\D g_{\omega_j}|}\int|\phi\circ f_{\omega_j}(x)-\phi\circ f_{\omega'}(x)|d\mu_{\omega_j}(x)\\
&\le  \sum_{j=1}^d\int_{I_j}{\bar\rho_j}^{-1}d\omega'{\bar\rho_i}^{-1}\int_{I_i}d\omega\frac{\rho_g(\omega_j)}{|\D g_{\omega_j}|} L\diam (I_j)\\
&\le L\sigma^{-1} C_1\sum_{j=1}^d{\bar\rho_j}^{-1}|I_j|{\bar\rho_i}^{-1}\int_{I_i}d\omega\frac{\rho_g(\omega_j)}{|\D g_{\omega_j}|}\\
&\le  L\sigma^{-1} C_1\sum_{j=1}^d{\bar\rho_i}^{-1}\int_{I_i}d\omega\frac{\rho_g(\omega_j)}{|\D g_{\omega_j}|}\\
&\le   L\sigma^{-1} C_1
\end{align*}
where we used that
\[
|\phi\circ f_{\omega_j}(x)-\phi\circ f_{\omega'}(x)|\le |f_{\omega_j}(x)- f_{\omega'}(x)|\le L|\omega_j-\omega'|\le L\diam (I_j)\le L\sigma^{-1}C_1,
\]
 recall that $L$ is the Lipschitz constant of $f$ and $C_1=\diam(\T^{m_1})$; and that 
 \begin{equation}\label{Eq:resumform}
 {\bar\rho_i}^{-1}\int_{I_i}d\omega\sum_{j=1}^d\frac{\rho_g(\omega_j)}{|\D g_{\omega_j}|}={\bar\rho_i}^{-1}\int_{I_i}d\omega\rho_g(\omega)=1.
 \end{equation}

For the last term, on \eqref{Eq:EstdWFL2},
\begin{align}
&\int \phi(x) d\left( \sum_{j=1}^d{\bar\rho_i}^{-1}\int_{I_i}d\omega\frac{\rho_g(\omega_j)}{|\D g_{\omega_j}|}\mc F_j(\mu_{\omega_j}-({\bo \Delta} \mu)_j)\right)(x)=\nonumber\\
&\sum_{j=1}^d{\bar\rho_i}^{-1}\int_{I_i}d\omega\frac{\rho_g(\omega_j)}{|\D g_{\omega_j}|}\int \phi(x)d\mc F_j(\mu_{\omega_j}-({\bo \Delta} \mu)_j)(x)\nonumber\\
&\le \sum_{j=1}^d{\bar\rho_i}^{-1}\int_{I_i}d\omega\frac{\rho_g(\omega_j)}{|\D g_{\omega_j}|}\Lip(\mc F_j)d_W(\mu_{\omega_j},({\bo \Delta} \mu)_j)\\
&\le L\ell_0C_1\sigma^{-1}
\end{align}
where in the last step we used that $\Lip(\mc F_j)\le \sup_\omega\Lip(f_{\omega*})\le L$ and that 
\[
d_W(\mu_{\omega_j},({\bo \Delta} \mu)_j)\le \ell_0|\diam (I_j)\le \ell_0C_1\sigma^{-1}.
\]

Putting all of the above together we conclude that there is $K_\#>0$ (independent of $\sigma>0,$) such that 
 \[
 d_W(({\bo \Delta}(F_*\mu))_i, ({\bo {\mc L}}{\bo \Delta}\mu)_i)\le K_\#(\sigma^{-1}+\delta_0+\delta_1).
 \]
 Now, since for every $k\in\N$, $F_*^k\mu\in\mc M_{1,\nu_k}(\T^{m_1}\times\T^{m_1})\cap\Gamma_{\ell_0}$, by repeated applications of the triangular inequality 
\begin{align*}
d_W({\bo \Delta} F_*^n\mu,{\bo {\mc L}}^n{\bo \Delta}\mu)&\le \sum_{k=0}^{n-1} d_W({\bo {\mc L}}^{n-k-1}{\bo \Delta} F_*(F_*^{k}\mu),{\bo {\mc L}}^{n-k}{\bo \Delta}(F_*^{k}\mu))\\
& \le \sum_{k=0}^{n-1}\left (\sup_{\omega}\Lip(f_{\omega*})\right)^{n-k-1}d_W({\bo \Delta} F_*(F_*^{k}\mu),{\bo {\mc L}}{\bo \Delta}(F_*^{k}\mu))\\
&\le  K_\#\sum_{k=0}^{n-1} L^{n-k}\left[\ell_0\sigma^{-1}+\delta_k+\delta_{k+1}\right]\\
&\le K_\#\left[\ell_0\sigma^{-1}+\sum_{k=0}^{n-1}\delta_k+\delta_{k+1}\right]\sum_{k=0}^{n-1} L^{n-k}\\
&\le K_\#(\ell_0\sigma^{-1}+C_3\delta_0)\sum_{k=0}^{n-1} L^{n-k}.
\end{align*}
where we used that by Lemma \ref{Lem:Cones},  $\delta_k\le C_g\lambda_g^k\delta_0$, and $C_3:=\frac{2C_g}{1-\lambda_g}$.

\end{proof}

The operator ${\bo {\mc L}}$ has good spectral properties. To prove it, we are going to need the following lemma
\begin{lemma}\label{Lem:pEstimate}
The following inequality holds
\[
{\bar\rho_i}^{-1}{\bar\rho_j}^{-1}\int_{I_i}d\omega \frac{\rho_g(\omega_j)}{|\D g_{\omega_j}|}>p. 
\]
with 
\[
p:=e^{-C_1D[\frac{3}{1-\sigma^{-1}}+1]}\in(0,1)
\]
where $D$ is the bound on the distortion of the map $g$, and $C_1$ is the diameter of $\T^{m_1}$.
\end{lemma}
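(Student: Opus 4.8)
The plan is to recognize the integral $\int_{I_i}\frac{\rho_g(\omega_j)}{|\D g_{\omega_j}|}\,d\omega$ as a $\nu_g$-mass and then to bound it below by purely distortion-theoretic estimates. First I would change variables $\omega=g(u)$ with $u=\omega_j=h_j(\omega)$ ranging over $h_j(I_i)\subseteq I_j$; since $d\omega=|\D g_u|\,du$ this cancels the Jacobian and gives $\int_{I_i}\frac{\rho_g(\omega_j)}{|\D g_{\omega_j}|}\,d\omega=\int_{h_j(I_i)}\rho_g\,d\Leb_{\T^{m_1}}=\nu_g(h_j(I_i))$ --- equivalently, $\frac{\rho_g(\omega_j)}{|\D g_{\omega_j}|}$ is the $j$-th summand of the transfer operator of $g$ applied to the fixed point $\rho_g$, so its integral over $I_i$ is $\nu_g(g^{-1}(I_i)\cap I_j)$. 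Thus the lemma reduces to the estimate $\nu_g(h_j(I_i))\ge p\,\bar\rho_i\bar\rho_j$.

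I would then record two standard facts. First, the invariant density $\rho_g$ lies in the cone $\mc V_{a_0}$ with $a_0=\tfrac{D}{1-\sigma^{-1}}$, being the fixed point of the transfer operator (Lemma \ref{Lem:Cones}(i) together with closedness of $\mc V_{a_0}$ under limits); since $\int_{\T^{m_1}}\rho_g\,d\Leb_{\T^{m_1}}=1=\Leb_{\T^{m_1}}(\T^{m_1})$, the density takes a value $\le 1$ and a value $\ge 1$, so the cone condition forces $e^{-a_0 C_1}\le\rho_g\le e^{a_0 C_1}$ pointwise. Second, the distortion bound \eqref{As:0.2} applied over the diameter $C_1$ gives, for every branch, $\max_{\omega}|\D g_{h_j(\omega)}|^{-1}\le e^{DC_1}\min_{\omega}|\D g_{h_j(\omega)}|^{-1}$.

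The rest is a short chain of inequalities. Using $\rho_g\ge e^{-a_0C_1}$ I bound $\nu_g(h_j(I_i))\ge e^{-a_0C_1}\Leb_{\T^{m_1}}(h_j(I_i))$; then $\Leb_{\T^{m_1}}(h_j(I_i))=\int_{I_i}|\D g_{h_j(\omega)}|^{-1}\,d\omega\ge\Leb_{\T^{m_1}}(I_i)\min_\omega|\D g_{h_j(\omega)}|^{-1}$; trading the minimum for the maximum via the distortion bound loses a factor $e^{-DC_1}$; and since $g_j:I_j\to\T^{m_1}$ is a full branch, $\int_{\T^{m_1}}|\D g_{h_j(\omega)}|^{-1}\,d\omega=\Leb_{\T^{m_1}}(h_j(\T^{m_1}))=\Leb_{\T^{m_1}}(I_j)$, so we arrive at $\nu_g(h_j(I_i))\ge e^{-(a_0+D)C_1}\Leb_{\T^{m_1}}(I_i)\Leb_{\T^{m_1}}(I_j)$. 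Finally I convert Lebesgue measure back to $\nu_g$ on $I_i$ and $I_j$ via $\bar\rho_i=\int_{I_i}\rho_g\le e^{a_0C_1}\Leb_{\T^{m_1}}(I_i)$ (and the same for $j$), losing $e^{-a_0C_1}$ twice more. Collecting exponents gives $\nu_g(h_j(I_i))\ge e^{-(3a_0+D)C_1}\bar\rho_i\bar\rho_j$, and since $3a_0+D=D\bigl(\tfrac{3}{1-\sigma^{-1}}+1\bigr)$ the prefactor is exactly $p$; strictness of the inequality is routine, e.g. because all sets involved have positive measure and $\rho_g$ (being in $\mc V_{a_0}$ with $a_0>0$) cannot attain both $e^{-a_0C_1}$ and $e^{a_0C_1}$.

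The only step I would single out as requiring a genuine idea is the handling of $|\D g_{h_j(\omega)}|^{-1}$: the hypotheses give no bound on $\|\D g\|$ in terms of the data $(D,\sigma,C_1)$, so one cannot estimate this Jacobian pointwise. The trick is to pass to its average over the whole base, which is literally $\Leb_{\T^{m_1}}(I_j)$, and then recover a pointwise lower bound from bounded distortion. Everything else is bookkeeping of the cone exponent $a_0$ and elementary changes of variables.
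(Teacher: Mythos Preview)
Your proof is correct and follows essentially the same approach as the paper: both obtain the pointwise lower bound $|\D g_{h_j(\omega)}|^{-1}\ge e^{-DC_1}|I_j|$ from bounded distortion together with $\int_{\T^{m_1}}|\D g_{h_j(\omega)}|^{-1}\,d\omega=|I_j|$ (the paper invokes the intermediate value theorem to find a point where the Jacobian equals its average, you use $\max\ge$ average), and both use $\rho_g\in\mc V_{a_0}$ to bound $\rho_g$ pointwise and to compare $\bar\rho_i$ with $|I_i|$. Your initial change of variables identifying the integral as $\nu_g(h_j(I_i))$ is a nice repackaging but not substantively different.
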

\begin{proof}
Recall that $\omega_j$ is shorthand notation for $h_j(\omega)$. Since
\[
\int_{\T^{m_1}} d\omega\frac{1}{|\D g_{\omega_j}|}=\Leb_{\T^{m_1}}(I_j)=:|I_j|
\]
and $|\D g\circ h_j|$ is continuous, there is $\omega_0$ such that
\[
\frac{1}{|\D g(h_j(\omega_0))|}=|I_j|
\]
Recalling the notation $\omega_j=h_j(\omega)$, the bound on the distortion \eqref{As:0.2} gives
\[
|I_j|^{-1}|\D g_{\omega_j}|^{-1}=\frac{|\D g(h_j(\omega_0))|}{|\D g(h_j(\omega))|}\ge e^{-D|\omega-\omega_0|}\ge e^{-DC_1}
\]
where $C_1$ equals the diameter of $\T^{m_1}$ w.r.t. the Euclidean distance. 

Also, recall that $\rho_g\in\mc V_{a_0}$ with $a_0=\frac{D}{1-\sigma^{-1}}$, therefore $e^{-C_1a_0}\le \rho_g\le e^{C_1a_0}$ and ${\bar\rho_i}\le e^{C_1a_0}|I_i|$. 
Putting the above considerations together 
\begin{align*}
{\bar\rho_i}^{-1}{\bar\rho_j}^{-1}\int_{I_i}d\omega \frac{\rho_g(\omega_j)}{|\D g_{\omega_j}|}&\ge e^{-C_1a_0}|I_j|^{-1}e^{-C_1a_0}|I_i|^{-1}\int_{I_i}d\omega\frac{e^{-C_1a_0}}{|\D g_{\omega_j}|}\\
&\ge e^{-3C_1a_0}|I_i|^{-1}\int_{I_i}e^{-DC_1}\\
&\ge e^{-3C_1a_0-DC_1}
\end{align*} 
\end{proof}
\begin{remark}
Notice that $p$ depends on $\sigma$, but for $D$ fixed, $p$ increases with $\sigma>1$. In particular, assuming that $\sigma\ge \sigma_0>1$, we get
\[
p\ge e^{-C_1D\left[1+\frac{3}{1-\sigma_0^{-1}}\right]}.
\]
\end{remark}

In a proposition below we show that the operator $\bo{\mc L}$ has good contracting properties with respect to $d_{TV}$.  First we state a couple of lemmas and definitions.
\begin{lemma}\label{Lem:ContmcLinTV}
For every $\bo\mu_1,\bo\mu_2\in(\mc M_1(\T))^d$
\begin{equation}
d_{TV}(\bo {\mc L}\bo\mu_1,\bo{\mc L}\bo\mu_2)\le d_{TV}(\bo\mu_1,\bo\mu_2).
\end{equation}
\end{lemma}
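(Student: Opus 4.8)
The plan is to exploit the fact that $\bo{\mc L}$ is built out of two operations, each of which is a contraction (in fact non-expansion) for $d_{TV}$: (i) push-forward by a measurable map, and (ii) taking convex combinations of measures with fixed weights. First I would recall that for any measurable $T:\T^{m_2}\to\T^{m_2}$ and any $\mu,\nu\in\mc M_1(\T^{m_2})$ one has $d_{TV}(T_*\mu,T_*\nu)\le d_{TV}(\mu,\nu)$, since $|T_*\mu(A)-T_*\nu(A)|=|\mu(T^{-1}A)-\nu(T^{-1}A)|\le d_{TV}(\mu,\nu)$ for every measurable $A$; integrating this over $\omega\in I_j$ against the probability weights ${\bar\rho_j}^{-1}\rho_g(\omega)\,d\omega$ (or the corresponding normalized weights appearing in $\mc F_j$) shows each averaged operator $\mc F_j$ satisfies $d_{TV}(\mc F_j\mu,\mc F_j\nu)\le d_{TV}(\mu,\nu)$, because $d_{TV}$ of an average is at most the average of the $d_{TV}$'s (convexity of the TV norm).

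Next I would observe that the coefficients
\[
c_{ij}:={\bar\rho_i}^{-1}\int_{I_i}d\omega\,\frac{\rho_g(\omega_j)}{|\D g_{\omega_j}|}
\]
appearing in \eqref{Eq:fi} are nonnegative and, by the resummation identity \eqref{Eq:resumform}, satisfy $\sum_{j=1}^d c_{ij}=1$ for each $i$; thus for each fixed $i$, $(\bo{\mc L}\bo\mu)_i=\sum_{j=1}^d c_{ij}\,\mc F_j(\bo\mu)_j$ is a genuine convex combination. Then for any $\bo\mu_1,\bo\mu_2\in(\mc M_1(\T^{m_2}))^d$ and each $i$,
\[
d_{TV}\bigl((\bo{\mc L}\bo\mu_1)_i,(\bo{\mc L}\bo\mu_2)_i\bigr)
\le \sum_{j=1}^d c_{ij}\, d_{TV}\bigl(\mc F_j(\bo\mu_1)_j,\mc F_j(\bo\mu_2)_j\bigr)
\le \sum_{j=1}^d c_{ij}\, d_{TV}\bigl((\bo\mu_1)_j,(\bo\mu_2)_j\bigr)
\le \max_{j} d_{TV}\bigl((\bo\mu_1)_j,(\bo\mu_2)_j\bigr)=d_{TV}(\bo\mu_1,\bo\mu_2),
\]
using the convexity of $d_{TV}$ in the first step, the non-expansion of $\mc F_j$ in the second, and $\sum_j c_{ij}=1$ in the third. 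Taking the maximum over $i$ on the left-hand side gives the claim.

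The only points requiring a little care — but none of them a genuine obstacle — are: justifying that the TV norm is convex, i.e. $d_{TV}\bigl(\int T_{\omega*}\mu\,dm(\omega),\int T_{\omega*}\nu\,dm(\omega)\bigr)\le\int d_{TV}(T_{\omega*}\mu,T_{\omega*}\nu)\,dm(\omega)$ for a probability measure $m$ (this follows immediately from the dual representation $d_{TV}(\xi,\zeta)=\tfrac12\sup_{\|\phi\|_\infty\le1}\int\phi\,d(\xi-\zeta)$ and exchanging sup with integral by subadditivity); and checking that the weights defining $\mc F_j$ and those defining $\mc L_{ij}$ really do normalize to one, which is exactly \eqref{Eq:resumform} together with the definition ${\bar\rho_i}=\nu_g(I_i)$. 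Since $\mc M_1(\T)$ is the case $m_2=1$ of the above, no separate argument is needed for the stated one-dimensional version. I expect the entire proof to be three or four lines once these standard facts are invoked.
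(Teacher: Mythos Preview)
Your proposal is correct and follows essentially the same approach as the paper: the paper also first observes that each $\mc F_j$ is a weak contraction in $d_{TV}$ (since push-forwards are), then uses the resummation identity \eqref{Eq:resumform} to see that the coefficients ${\bar\rho_i}^{-1}\int_{I_i}d\omega\,\rho_g(\omega_j)/|\D g_{\omega_j}|$ form a convex combination, and bounds componentwise exactly as you do. Your write-up is slightly more explicit about why push-forwards and averages are $d_{TV}$-nonexpansive, but the argument is the same.
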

\begin{proof}
By definition of Total Variation distance, transfer operators are weak contractions with respect to $d_{TV}$; in particular,
for any $\eta_1,\eta_2\in\mc M_1(\T)$ and any $\omega\in\T^{m_1}$
\[
d_{TV}(f_{\omega*}\eta_1,f_{\omega*}\eta_2)\le d_{TV}(\eta_1,\eta_2),
\]
and therefore 
\begin{equation}\label{Eq:ContmcFj}
d_{TV}(\mc F_j\eta_1,\mc F_j\eta_2)\le d_{TV}(\eta_1,\eta_2)
\end{equation}
for any $j$. 

{By formula \ref{Eq:resumform}} one gets that for every $i$
\begin{align*}
d_{TV}&((\bo {\mc L}\bo\mu_1)_i,(\bo{\mc L}\bo\mu_2)_i)=\\
&\quad=d_{TV}\left(\sum_j{\bar\rho_i}^{-1}\left(\int_{I_i}d\omega\frac{\rho_g(\omega_j)}{|\D g_{\omega_j}|}\right)\mc F_j(\bo\mu_1)_j,\sum_j{\bar\rho_i}^{-1}\left(\int_{I_i}d\omega\frac{\rho_g(\omega_j)}{|\D g_{\omega_j}|}\right)\mc F_j(\bo\mu_2)_j\right)\\
&\quad\le \sum_j{\bar\rho_i}^{-1}\left(\int_{I_i}d\omega\frac{\rho_g(\omega_j)}{|\D g_{\omega_j}|}\right) d_{TV}(\mc F_j(\bo\mu_1)_j,\mc F_j(\bo\mu_2)_j)\\
&\quad\le  \sum_j{\bar\rho_i}^{-1}\left(\int_{I_i}d\omega\frac{\rho_g(\omega_j)}{|\D g_{\omega_j}|}\right) d_{TV}((\bo\mu_1)_j,(\bo\mu_2)_j)\\
&\quad\le d_{TV}(\bo\mu_1,\bo\mu_2).
\end{align*}
\end{proof}

Lemma \ref{Lem:pEstimate}  implies that ${\bo {\mc L}}$ can be decomposed in the following way: there are $p_{ij}\ge 0$ such that 
\begin{equation}\label{Eq:Takeoutp}
(\bo{\mc L})_{ij}=\mc L_{ij}=p\int_{I_j}d\omega\rho_g(\omega)f_{\omega*}+p_{ij}\mc F_j. 
\end{equation}
Define ${\bo {\mc L}}_1:(\mc M_1(\T^{m_2}))^d\rightarrow (\mc M_1(\T^{m_2}))^d$ as 
\[
({\bo {\mc L}}_1)_{ij}=\int_{I_j}d\omega \rho_g(\omega)f_{\omega *}
\] 
and ${\bo {\mc L}}_2:(\mc M_1(\T^{m_2}))^d\rightarrow (\mc M_1(\T^{m_2}))^d$ as
\[
({\bo {\mc L}}_2)_{ij}:=p_{ij}\mc F_j
\]
so that $\mc L=p\mc L_1+\mc L_2$.

\begin{lemma}
For every $n\in\N$ the following decomposition holds 
\[
{\bo {\mc L}}^n\bo\mu:=p^n{\bo {\mc L}}_1^n\bo\mu+(1-p^n)\bo R_n\bo\mu.
\]
where $\bo R_n:(\mc M_1(\T^{m_2}))^d\rightarrow (\mc M_1(\T^{m_2}))^d$ is such that 
\[
d_{TV}(\bo R_n\bo\mu_1,\bo R_n\bo\mu_2)\le d_{TV}(\bo\mu_1,\bo\mu_2)
\] 
for all $\bo\mu_1,\bo\mu_2\in(\mc M_1(\T^{m_2}))^{d}$.
\end{lemma}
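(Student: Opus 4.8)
The plan is to prove the decomposition by induction on $n$, using the splitting $\bo{\mc L} = p\bo{\mc L}_1 + \bo{\mc L}_2$ (with $\bo{\mc L}_2$ having entries $p_{ij}\mc F_j$) together with Lemma \ref{Lem:ContmcLinTV}. The base case $n=1$ is immediate: set $\bo R_1\bo\mu := (1-p)^{-1}\bo{\mc L}_2\bo\mu$, so that $\bo{\mc L}\bo\mu = p\bo{\mc L}_1\bo\mu + (1-p)\bo R_1\bo\mu$; the contraction property $d_{TV}(\bo R_1\bo\mu_1,\bo R_1\bo\mu_2)\le d_{TV}(\bo\mu_1,\bo\mu_2)$ follows from the estimate $d_{TV}(\bo{\mc L}\bo\mu_1,\bo{\mc L}\bo\mu_2) = \|p\bo{\mc L}_1(\bo\mu_1-\bo\mu_2) + (1-p)\bo R_1(\bo\mu_1-\bo\mu_2)\|$, more precisely from the same convexity/averaging argument used in the proof of Lemma \ref{Lem:ContmcLinTV} applied to the stochastic-matrix-like structure of $\bo{\mc L}_2$: each $(\bo{\mc L}_2)_{ij} = p_{ij}\mc F_j$ with $\sum_j p_{ij} = 1-p$ (this is exactly formula \eqref{Eq:resumform} minus the $p$ that was extracted in \eqref{Eq:Takeoutp}), so $\bo R_1$ is an average of the weak-contraction operators $\mc F_j$ and is itself a weak contraction in $d_{TV}$.

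For the inductive step, assume $\bo{\mc L}^n\bo\mu = p^n\bo{\mc L}_1^n\bo\mu + (1-p^n)\bo R_n\bo\mu$ with $\bo R_n$ a $d_{TV}$-weak contraction. Then
\[
\bo{\mc L}^{n+1}\bo\mu = \bo{\mc L}\bigl(p^n\bo{\mc L}_1^n\bo\mu + (1-p^n)\bo R_n\bo\mu\bigr) = p^n\bo{\mc L}\bo{\mc L}_1^n\bo\mu + (1-p^n)\bo{\mc L}\bo R_n\bo\mu.
\]
Now apply the $n=1$ decomposition to the operator $\bo{\mc L}$ acting on $\bo{\mc L}_1^n\bo\mu$: $\bo{\mc L}\bo{\mc L}_1^n\bo\mu = p\bo{\mc L}_1^{n+1}\bo\mu + (1-p)\bo R_1\bo{\mc L}_1^n\bo\mu$. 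Substituting,
\[
\bo{\mc L}^{n+1}\bo\mu = p^{n+1}\bo{\mc L}_1^{n+1}\bo\mu + p^n(1-p)\bo R_1\bo{\mc L}_1^n\bo\mu + (1-p^n)\bo{\mc L}\bo R_n\bo\mu.
\]
The last two terms together have total weight $p^n(1-p) + (1-p^n) = 1 - p^{n+1}$, so defining
\[
\bo R_{n+1}\bo\mu := \frac{p^n(1-p)}{1-p^{n+1}}\,\bo R_1\bo{\mc L}_1^n\bo\mu + \frac{1-p^n}{1-p^{n+1}}\,\bo{\mc L}\bo R_n\bo\mu
\]
gives the claimed decomposition. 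It remains to check $\bo R_{n+1}$ is a weak contraction: both $\bo R_1\bo{\mc L}_1^n$ and $\bo{\mc L}\bo R_n$ are compositions of $d_{TV}$-weak contractions (using Lemma \ref{Lem:ContmcLinTV} for $\bo{\mc L}$, the inductive hypothesis for $\bo R_n$, the $n=1$ case for $\bo R_1$, and the fact that $\bo{\mc L}_1$ is an average of transfer operators hence a weak contraction), and $\bo R_{n+1}$ is a convex combination of these with coefficients summing to one, so $d_{TV}(\bo R_{n+1}\bo\mu_1,\bo R_{n+1}\bo\mu_2)\le d_{TV}(\bo\mu_1,\bo\mu_2)$ by the triangle inequality. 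One should note $\bo R_{n+1}$ maps into $(\mc M_1(\T^{m_2}))^d$ since it is a convex combination of probability-measure-valued operators, so it is a bona fide operator of the required type.

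The argument is essentially bookkeeping; the only point requiring care is verifying that $\bo{\mc L}_1$ and $\bo R_1$ (equivalently $\bo{\mc L}_2$) are genuinely weak contractions and genuinely map probability vectors to probability vectors, which hinges on the row-sum identities $\sum_j \int_{I_j}d\omega\,\rho_g(\omega) = 1$ and $\sum_j p_{ij} = 1-p$ coming from \eqref{Eq:resumform} and \eqref{Eq:Takeoutp}. I anticipate no real obstacle; the main thing to get right is tracking the weights in the convex combination defining $\bo R_{n+1}$ and confirming they telescope to $1-p^{n+1}$.
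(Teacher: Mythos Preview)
Your proof is correct and follows essentially the same approach as the paper: an induction on $n$ using the splitting $\bo{\mc L}=p\bo{\mc L}_1+(1-p)\bo R_1$, with the inductive definition $\bo R_{n+1}$ as the convex combination (with weights $\frac{p^n(1-p)}{1-p^{n+1}}$ and $\frac{1-p^n}{1-p^{n+1}}$) of $\bo R_1\bo{\mc L}_1^n$ and $\bo{\mc L}\bo R_n$. The paper's verification of the weak-contraction property invokes Lemma~\ref{Lem:ContmcLinTV} for both $\bo{\mc L}$ and $\bo{\mc L}_1$ together with Lemma~\ref{Lem:ConvexComb}, which matches your use of the triangle inequality on the convex combination.
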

\begin{proof}
Let's start  noticing that, by definition, $\sum_j p_{ij}=(1-p)$ for every $i$, in fact comparing equations \eqref{Eq:Takeoutp} and \eqref{Eq:fi} follows that 
\[
p{\bar\rho_i}+p_{ij}={\bar\rho_j}^{-1}\left(\int_{I_i}d\omega\frac{\rho_g(\omega_j)}{|\D g_{\omega_j}|}\right)
\]
and 
\[
\sum_j(p{\bar\rho_j}+p_{ij})=p+\sum_jp_{ij}=\sum_j{\bar\rho_i}^{-1}\left(\int_{I_i}d\omega\frac{\rho_g(\omega_j)}{|\D g_{\omega_j}|}\right)=1.
\]

Now we prove the statement of the lemma by induction on $n\in\N$. For $n=1$, $\bo R_1=(1-p)^{-1}\bo{\mc L_2}$ and recalling \eqref{Eq:ContmcFj}
\begin{align*}
d_{TV}((1-p)^{-1}\bo{\mc L_2} & \bo\mu_1,(1-p)^{-1}\bo{\mc L_2}\bo\mu_2)=\\
&=\max_id_{TV}\left(\sum_j(1-p)^{-1}p_{ij}\mc F_j(\bo\mu_1)_j,\sum_j(1-p)^{-1}p_{ij}\mc F_j(\bo\mu_2)_j\right)\\
&\le \max_i \sum_j(1-p)^{-1}p_{ij} d_{TV}((\bo\mu_1)_j,(\bo\mu_2)_j)\\
&\le d_{TV}(\bo\mu_1,\bo\mu_2).
\end{align*}

Now assume that the statement is true for $n-1$. 
\begin{align*}
\bo{\mc L}^{n}=\bo{\mc L}\bo{\mc L}^{n-1}&=p^{n}{\bo{\mc L}_1}^{n}+p^{n-1}(1-p){\bo R}_1{\bo{\mc L}_1}^{n-1}+(1-p^{n-1})\bo{\mc L}\bo R_{n-1}.
\end{align*}
Define
\[
\bo R_n:=\frac{(1-p)p^{n-1}{\bo R}_1{\bo{\mc L}_1}^{n-1}+(1-p^{n-1})\bo{\mc L}\bo R_{n-1}}{1-p^n}
\]
and by Lemma \ref{Lem:ContmcLinTV} applied to $\bo{\mc L}$ and $\bo{{\mc L}_1}$, the inductive step,  $(1-p)p^{n-1}+(1-p^{n-1})=1-p^n$, and Lemma \ref{Lem:ConvexComb}
\begin{align*}
d_{TV}(\bo R_n\bo\mu_1,\bo R_n\bo\mu_2)\le d_{TV}(\bo\mu_1,\bo\mu_2).
\end{align*}
\end{proof}

We are now ready to show that $\bo{\mc L}$ has good contraction properties with respect to the Total Variation distance. The proof uses a coupling argument.
\begin{proposition}\label{Prop:SpecPropMCL}
There are $C_{{\bo {\mc L}}}>0$ and $\lambda_{\bo {\mc L}}\in(0,1)$ such that  for any   $\bo \mu_1,\bo \mu_2\in (\mc M_1(\T^{m_2}))^d$
\begin{equation}\label{Eq:ContPropMCL}
d_{TV}({\bo {\mc L}}^n\bo\mu_1, {\bo {\mc L}}^n\bo\mu_2)\le C_{{\bo {\mc L}}}\lambda_{{\bo {\mc L}}}^n d_{TV}(\bo\mu_1,\bo\mu_2).
\end{equation}
\end{proposition}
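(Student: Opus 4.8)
The plan is to run a Doeblin-type argument built on the two facts already isolated: the splitting $\bo{\mc L}^n=p^n\bo{\mc L}_1^n+(1-p^n)\bo R_n$ with $\bo R_n$ a $d_{TV}$-weak contraction, and the structure of $\bo{\mc L}_1$, together with Assumption \eqref{As:1} for $\mc P$. First I would record the key structural fact: since $(\bo{\mc L}_1)_{ij}=\int_{I_j}\rho_g(\omega)f_{\omega*}\,d\omega$ does not depend on $i$, the vector $\bo{\mc L}_1\bo\mu$ has all $d$ components equal to the single measure $\xi(\bo\mu):=\sum_{j=1}^d\bar\rho_j\mc F_j(\bo\mu)_j=\sum_j\int_{I_j}\rho_g(\omega)f_{\omega*}(\bo\mu)_j\,d\omega$; applying $\bo{\mc L}_1$ again to a constant vector with value $\xi$ produces the constant vector with value $\int_{\T^{m_1}}\rho_g(\omega)f_{\omega*}\xi\,d\omega=\mc P\xi$, so by induction $\bo{\mc L}_1^n\bo\mu$ is the constant vector with value $\mc P^{n-1}\xi(\bo\mu)$. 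As $\xi(\bo\mu)$ is a convex combination of the $\mc F_j(\bo\mu)_j$ with weights $\bar\rho_j=\nu_g(I_j)$ summing to $1$, and each $\mc F_j$ is a $d_{TV}$-weak contraction by \eqref{Eq:ContmcFj}, convexity of $d_{TV}$ gives $d_{TV}(\xi(\bo\mu_1),\xi(\bo\mu_2))\le d_{TV}(\bo\mu_1,\bo\mu_2)$, whence by Assumption \eqref{As:1}
\[
d_{TV}(\bo{\mc L}_1^n\bo\mu_1,\bo{\mc L}_1^n\bo\mu_2)=d_{TV}(\mc P^{n-1}\xi(\bo\mu_1),\mc P^{n-1}\xi(\bo\mu_2))\le C\lambda^{n-1}\,d_{TV}(\bo\mu_1,\bo\mu_2).
\]

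Next I would feed this and $d_{TV}(\bo R_n\bo\mu_1,\bo R_n\bo\mu_2)\le d_{TV}(\bo\mu_1,\bo\mu_2)$ into $\bo{\mc L}^n=p^n\bo{\mc L}_1^n+(1-p^n)\bo R_n$ and use convexity of $d_{TV}$ once more to obtain, for every $n$,
\[
d_{TV}(\bo{\mc L}^n\bo\mu_1,\bo{\mc L}^n\bo\mu_2)\le\bigl(p^nC\lambda^{n-1}+1-p^n\bigr)\,d_{TV}(\bo\mu_1,\bo\mu_2)=\bigl(1-p^n(1-C\lambda^{n-1})\bigr)\,d_{TV}(\bo\mu_1,\bo\mu_2).
\]
Since $\lambda<1$ I may fix $N\in\N$ with $C\lambda^{N-1}<1$; then $\kappa:=1-p^N(1-C\lambda^{N-1})\in(0,1)$ and $\bo{\mc L}^N$ strictly contracts $d_{TV}$ by $\kappa$. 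For arbitrary $n=mN+r$ with $0\le r<N$, Lemma \ref{Lem:ContmcLinTV} handles the first $r$ steps and submultiplicativity of $\bo{\mc L}^N$ gives $d_{TV}(\bo{\mc L}^n\bo\mu_1,\bo{\mc L}^n\bo\mu_2)\le\kappa^m\,d_{TV}(\bo\mu_1,\bo\mu_2)\le\kappa^{-1}(\kappa^{1/N})^n\,d_{TV}(\bo\mu_1,\bo\mu_2)$, which is \eqref{Eq:ContPropMCL} with $\lambda_{\bo{\mc L}}:=\kappa^{1/N}$ and $C_{\bo{\mc L}}:=\kappa^{-1}$. By the remark following Lemma \ref{Lem:pEstimate}, $p$ is bounded below uniformly for $\sigma\ge\sigma_0$ while $N$ depends only on $C$ and $\lambda$, so $\kappa$ — hence $C_{\bo{\mc L}}$ and $\lambda_{\bo{\mc L}}$ — can be taken uniform in $\sigma\ge\sigma_0$, which is what the proof of Theorem \ref{Thm:AppDecayofCorr} will use.

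The step I expect to be the main obstacle is conceptual rather than computational: the natural decomposition $\bo{\mc L}^n=p^n\bo{\mc L}_1^n+(1-p^n)\bo R_n$ puts almost all the weight ($1-p^n\to1$) on the merely weakly contracting remainder $\bo R_n$, and $\bo{\mc L}_1$ does not couple the two copies to a common measure but only to constant \emph{vectors} with distinct values $\xi(\bo\mu_1)\neq\xi(\bo\mu_2)$; so neither ingredient alone yields a contraction factor bounded away from $1$, and a single iterate of $\bo{\mc L}$ is genuinely not enough. The resolution is to wait the fixed number $N$ of steps needed for the factor $C\lambda^{N-1}$ supplied by Assumption \eqref{As:1} to drop below $1$ — at which point the convex combination becomes strictly contracting — and only then bootstrap; the remaining details (the identity $\bo{\mc L}_1^n\bo\mu=\mc P^{n-1}\xi(\bo\mu)$ and the convexity estimates) are routine.
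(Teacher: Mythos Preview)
Your argument is correct and follows essentially the same route as the paper: you identify $\bo{\mc L}_1^n\bo\mu$ with the constant vector $\mc P^{n-1}\xi(\bo\mu)$, use Assumption~\eqref{As:1} to make the $\bo{\mc L}_1^N$-part a strict $d_{TV}$-contraction for some fixed $N$, combine this with the weak contraction of $\bo R_N$ via the decomposition $\bo{\mc L}^N=p^N\bo{\mc L}_1^N+(1-p^N)\bo R_N$, and then bootstrap by Euclidean division using Lemma~\ref{Lem:ContmcLinTV}. The only cosmetic difference is that the paper fixes $n_0$ with $C\lambda^{n_0-1}\le\tfrac12$ (so $\kappa=1-\tfrac12 p^{n_0}$) whereas you take any $N$ with $C\lambda^{N-1}<1$; your version is slightly more explicit about the intermediate estimate $d_{TV}(\xi(\bo\mu_1),\xi(\bo\mu_2))\le d_{TV}(\bo\mu_1,\bo\mu_2)$ and about the uniformity of the constants in $\sigma\ge\sigma_0$, both of which the paper uses implicitly.
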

\begin{proof}
Notice that all the rows of the operator ${\bo{\mc L}_1}$ are equal, therefore, for any $\bo\mu\in(\mc M_1(\T^{m_2}))^d$, also all the components of $\bo{\mc L}_1\bo\mu$ are equal, i.e. there is $\mu'\in\mc M_1(\T^{m_2})$ such that $({\bo {\mc L}}_1\bo\mu)_i=\mu'$. By definition of  ${\bo{\mc L}_1}$  follows that
\[
(\bo{\mc L}_1^2\bo\mu)_i=\sum_{j=1}^d({\bo{\mc L}_1})_{ij}\mu'=\sum_{j=1}^d\int_{I_j}d\omega\rho_j(\omega)f_{\omega*}\mu'=\mc P\mu'
\]
and by induction
\begin{equation}\label{Eq:ExpL1n}
({\bo {\mc L}}_1^n\bo\mu)_i=\mc P^{n-1}\mu'
\end{equation}
for every $i$ and $n>1$.

Pick $n_0>1$ such that $C\lambda^{n_0-1}\le \frac 12$. Then it follows from \eqref{Eq:ExpL1n} and Assumption \eqref{As:1} that for all $n>1$
\[
d_{TV}({\bo {\mc L}}_1^n\bo\mu_1,{\bo {\mc L}}_1^n\bo\mu_2)\le \frac12d_{TV}(\bo\mu_1,\bo\mu_2).
\]
which implies
\begin{align*}
d_{TV}({\bo {\mc L}}^{n_0}\bo\mu_1,{\bo {\mc L}}^{n_0}\bo\mu_2)&\le p^{n_0}d_{TV}({\bo {\mc L}}_1^{n_0}\bo\mu_1,{\bo {\mc L}}_1^{n_0}\bo\mu_2)+(1-p^{n_0})d_{TV}(R_{n_0}\bo\mu_1,R_{n_0}\bo\mu_2)\\
&\le \left(1-\frac12p^{n_0}\right)d_{TV}(\bo\mu_1,\bo\mu_2).
\end{align*}

Define $\lambda_{\bo{\mc L}}:=\left(1-\frac12p^{n_0}\right)^\frac{1}{n_0}$ and $C_{\bo{\mc L}}:=\lambda_{\bo{\mc L}}^{-n_0}$. For every $n\in\N$ there are $k\in\N$ and $0\le r<n_0$ such that $n=kn_0+r$, and by Lemma \ref{Lem:ContmcLinTV}
\begin{align*}
d_{TV}(\mc L^n\bo\mu_1,\mc L^n\bo\mu_2) &= d_{TV}(\mc L^r\mc L^{kn_0}\bo\mu_1, \mc L^r\mc L^{kn_0}\bo\mu_2)\\
&\le d_{TV}(\mc L^{kn_0}\bo\mu_1,\mc L^{kn_0}\bo\mu_2)\\
&\le \lambda_{\bo{\mc L}}^{n_0k}d_{TV}(\bo\mu_1,\bo\mu_2)\\
%&=  \lambda_{\bo{\mc L}}^{n-r}d_{TV}(\bo\mu_1,\bo\mu_2)\\
&\le   C_{\bo{\mc L}}\lambda_{\bo{\mc L}}^nd_{TV}(\bo\mu_1,\bo\mu_2).
\end{align*}
 \end{proof}
 The contraction properties of $\bo{\mc L}$, \eqref{Eq:ContPropMCL}, and the weak*-compactness of $(\mc M_1(\T^{m_2}))^d$ imply the existence of $\bo \eta_0\in (\mc M_1(\T^{m_2}))^d$ such that 
  \[
  \bo{\mc L}\bo\eta_0=\bo\eta_0.
  \]
  
 The following proposition is the analogous of Proposition \ref{Prop:ContractionMarginalWassersteinDist} in the case without distortion and proves approximated memory loss for the vertical marginals under application of $F_*$.  In this case, there is an extra difficulty as, in order to prove Theorem \ref{Thm:AppDecayofCorr}, the class of probability measures we start from should include those having horizontal marginal equal to $\Leb_{\T^{m_1}}$  which in general can be different from the invariant measure $\nu_g$. 
\begin{proposition}[Approximate Memory Loss]\label{Prop:KantDistanceDistCase}
Fix $D,L>0$. Given any $\epsilon>0$, there is $C_{\bo{\mc L}}''>0$ and $\sigma_0>L$ such that for any $\sigma>\sigma_0$, $F$ satisfying \eqref{As:0.1}-\eqref{As:0.3} 
\begin{itemize}
\item[i)]
\[
d_W(\Pi F_*^t\mu_1,\Pi F_*^t\mu_2) \le C''_{{\bo {\mc L}}}\lambda_{{\bo {\mc L}}}^{t}+\epsilon,\quad\quad\forall t\in\N
\]
for any  $\mu_1,\mu_2\in  \Gamma_{\ell_0,a_0}$, $\ell_0$ defined in \eqref{Eq:DefEll0};
\item[ii)]
\[
d_W\left(\Pi F_*^t\mu_,\sum_{i=1}^d{\bar\rho_i}(\bo \eta_0)_i\right)\le  C''_{{\bo {\mc L}}}\lambda_{{\bo {\mc L}}}^{t}+\epsilon,\quad\quad\forall t\in\N
\]
for any  $\mu\in\Gamma_{a_0,\ell_0}$, $\ell_0$ defined in \eqref{Eq:DefEll0}.
\end{itemize}
\end{proposition}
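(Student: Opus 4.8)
The plan is to rerun the argument of Proposition~\ref{Prop:ContractionMarginalWassersteinDist} with the operator $\bo{\mc L}$ playing the role of $\mc P$, using the approximation $\bo\Delta(F_*^t\mu)\approx\bo{\mc L}^t(\bo\Delta\mu)$ of Proposition~\ref{Prop:ApproximDisitnMathcalL} and the contraction of $\bo{\mc L}$ from Proposition~\ref{Prop:SpecPropMCL}. A preliminary remark on quantifiers is essential: I would first fix a threshold $\sigma_*>\max\{1,L\}$ and only consider $\sigma\ge\sigma_*$; by the remarks following Lemmas~\ref{Lem:Cones} and~\ref{Lem:pEstimate}, the relevant constants $C_g,\lambda_g,C_{\bo{\mc L}},\lambda_{\bo{\mc L}},\ell_0,a_0$ and the bound $\bar\delta:=e^{a_0C_1}-e^{-a_0C_1}$ (which dominates $\|\rho-\rho_g\|_\infty$ for any probability density $\rho\in\mc V_{a_0}$, since $a_0\le D/(1-\sigma_*^{-1})$ uniformly) are then bounded uniformly in $\sigma$, with $\lambda_g,\lambda_{\bo{\mc L}}$ bounded away from $1$. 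Set $\Lambda:=\max\{\lambda_g,\lambda_{\bo{\mc L}}\}<1$ (enlarging $\lambda_{\bo{\mc L}}$ to $\Lambda$ is harmless) and $C''':=2C_2C_g+2C_{\bo{\mc L}}$.

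The one new ingredient relative to the undistorted case is the gap between the genuine vertical marginal and the $\rho_g$-weighted average $\widehat\Pi\mu:=\sum_{i=1}^d\bar\rho_i(\bo\Delta\mu)_i=\int_{\T^{m_1}}\rho_g(\omega)\mu_\omega\,d\omega$. For $\mu\in\Gamma_{\ell_0,a_0}$ with horizontal density $\rho_0$ and $\rho_t$ the density of $g_*^t(\pi_{1*}\mu)$, testing $\Pi F_*^t\mu-\widehat\Pi F_*^t\mu=\int_{\T^{m_1}}(\rho_t-\rho_g)(\omega)(F_*^t\mu)_\omega\,d\omega$ against $\phi\in\Lip_0^1$ (so $|\phi|\le C_2$) gives
\[
d_W\big(\Pi F_*^t\mu,\ \widehat\Pi F_*^t\mu\big)\ \le\ C_2\|\rho_t-\rho_g\|_{L^1}\ \le\ C_2\|\rho_t-\rho_g\|_\infty\ \le\ C_2C_g\lambda_g^t,
\]
by Lemma~\ref{Lem:Cones}(i)--(ii) (which gives $\rho_t\in\mc V_{a_0}$ and the decay). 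Combining this with Proposition~\ref{Prop:ApproximDisitnMathcalL} and the fact that the Wasserstein distance of convex combinations is at most the maximum over the pieces (Lemma~\ref{Lem:ConvexComb}), one obtains, for every $t$ and every $\mu\in\Gamma_{\ell_0,a_0}$,
\[
d_W\Big(\Pi F_*^t\mu,\ \sum_{i=1}^d\bar\rho_i(\bo{\mc L}^t\bo\Delta\mu)_i\Big)\ \le\ C_2C_g\lambda_g^t\ +\ K_\#L^{t+1}\big(\sigma^{-1}+C_3(\sigma)\bar\delta\big).
\]

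For part~(i), apply the last display to $\mu_1$ and $\mu_2$ and insert it into a triangle inequality whose middle term is controlled by the contraction of $\bo{\mc L}$: by $d_W\le d_{TV}$ (Lemma~\ref{Lem:dwlessdtv}) and Proposition~\ref{Prop:SpecPropMCL},
\[
d_W\Big(\sum_i\bar\rho_i(\bo{\mc L}^t\bo\Delta\mu_1)_i,\ \sum_i\bar\rho_i(\bo{\mc L}^t\bo\Delta\mu_2)_i\Big)\ \le\ d_{TV}(\bo{\mc L}^t\bo\Delta\mu_1,\bo{\mc L}^t\bo\Delta\mu_2)\ \le\ 2C_{\bo{\mc L}}\lambda_{\bo{\mc L}}^t,
\]
so $d_W(\Pi F_*^t\mu_1,\Pi F_*^t\mu_2)\le C'''\Lambda^t+2K_\#L^{t+1}(\sigma^{-1}+C_3(\sigma)\bar\delta)$. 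Now pick $n_0=n_0(\epsilon)$ with $C'''\Lambda^{n_0}\le\epsilon/2$ --- legitimate since $C''',\Lambda$ are uniform in $\sigma\ge\sigma_*$ --- and then $\sigma_0\ge\sigma_*$ so large that $2K_\#\max\{L,L^{n_0+1}\}\big(\sigma_0^{-1}+C_3(\sigma_0)\bar\delta\big)\le\epsilon/2$ (possible because $C_3$ is decreasing). For $t\le n_0$ the second term is $\le\epsilon/2$, hence $d_W(\Pi F_*^t\mu_1,\Pi F_*^t\mu_2)\le C'''\Lambda^t+\epsilon/2$. For $t>n_0$, Corollary~\ref{Cor:InvariantClasses}(i) gives $F_*^{t-n_0}\mu_j\in\Gamma_{\ell_0,a_0}$, so applying the $t=n_0$ estimate to $F_*^{t-n_0}\mu_1,F_*^{t-n_0}\mu_2$ yields $d_W(\Pi F_*^t\mu_1,\Pi F_*^t\mu_2)\le C'''\Lambda^{n_0}+\epsilon/2\le\epsilon\le C'''\Lambda^t+\epsilon$. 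Taking $C''_{\bo{\mc L}}:=C'''$, $\lambda_{\bo{\mc L}}:=\Lambda$ proves~(i). Part~(ii) is identical after replacing $\mu_2$ by $\bo\eta_0$: since $\bo{\mc L}\bo\eta_0=\bo\eta_0$ the middle term becomes $d_W(\bo{\mc L}^t\bo\Delta\mu,\bo{\mc L}^t\bo\eta_0)\le 2C_{\bo{\mc L}}\lambda_{\bo{\mc L}}^t$, the measure $\sum_{i=1}^d\bar\rho_i(\bo\eta_0)_i$ takes the role of the limit, and the two-regime split with the restart at time $t-n_0$ (valid because $F_*^{t-n_0}\mu\in\Gamma_{\ell_0,a_0}$) goes through verbatim.

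The main obstacle is the factor $L^{t+1}$ in the error of Proposition~\ref{Prop:ApproximDisitnMathcalL}: it forbids an estimate uniform in $t$ and is precisely what forces the split into $t\le n_0$ (where largeness of $\sigma$ absorbs the error) and $t>n_0$ (handled via the semigroup property together with invariance of $\Gamma_{\ell_0,a_0}$ under $F_*$). The attendant delicate point is the order of the quantifiers --- $n_0$ must be fixed before $\sigma_0$ --- which is why one first secures, for all $\sigma\ge\sigma_*$, uniform control of the constants entering $C'''$ and $\Lambda$, as supplied by the remarks after Lemmas~\ref{Lem:Cones} and~\ref{Lem:pEstimate}. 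The comparison of $\Pi$ with $\widehat\Pi$ is what lets the statement accommodate measures whose horizontal marginal is not $\nu_g$, e.g.\ those with horizontal marginal $\Leb_{\T^{m_1}}$ needed in Theorem~\ref{Thm:AppDecayofCorr}.
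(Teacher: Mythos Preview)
Your overall architecture is right and matches the paper's: approximate $\bo\Delta F_*^t\mu$ by $\bo{\mc L}^t\bo\Delta\mu$ via Proposition~\ref{Prop:ApproximDisitnMathcalL}, use the contraction of $\bo{\mc L}$ (Proposition~\ref{Prop:SpecPropMCL}), then split into a short-time regime and a restart regime using invariance of $\Gamma_{\ell_0,a_0}$. Your explicit comparison of $\Pi$ with $\widehat\Pi$ is in fact more careful than the paper, which tacitly identifies $\Pi F_*^{n+m_0}\mu$ with $\sum_i\bar\rho_i(\bo\Delta F_*^{n+m_0}\mu)_i$.

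There is, however, a genuine gap. You claim that one can choose $\sigma_0$ so large that
\[
2K_\#\max\{L,L^{n_0+1}\}\big(\sigma_0^{-1}+C_3(\sigma_0)\bar\delta\big)\le\epsilon/2,
\]
justifying this by ``$C_3$ is decreasing''. But decreasing is not enough: you need $C_3(\sigma)\to 0$. From the proof of Proposition~\ref{Prop:ApproximDisitnMathcalL}, $C_3=\tfrac{2C_g}{1-\lambda_g}$, and Lemma~\ref{Lem:Cones}(ii) at $n=0$ forces $C_g\ge\sup_{\rho_0\in\mc V_{a_0}}\|\rho_0-\rho_g\|_\infty$. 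Since $a_0\to D>0$ as $\sigma\to\infty$, the cone $\mc V_{a_0}$ does \emph{not} collapse, so this supremum (and hence $C_g$, and hence $C_3$) stays bounded away from $0$. Thus $C_3(\sigma_0)\bar\delta$ cannot be made small by enlarging $\sigma_0$ alone, and your short-time estimate fails.

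The paper repairs this with an extra waiting time $m_0$: it applies Proposition~\ref{Prop:ApproximDisitnMathcalL} to $F_*^{m_0}\mu$ rather than to $\mu$, so that the error carries $\|\rho_{m_0}-\rho_g\|_\infty$ instead of $\|\rho_0-\rho_g\|_\infty$. By Lemma~\ref{Lem:Cones}(ii) (with the uniform bounds $\bar C,\bar\lambda$ for $\sigma\ge\sigma_*$), one first chooses $m_0$ so that $C_3(\sigma)\|\rho_{m_0}-\rho_g\|_\infty$ is uniformly small, \emph{then} chooses $n_0$ for the $\bo{\mc L}$-contraction, \emph{then} $\sigma_0$ to kill the $\sigma^{-1}$ term. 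The initial segment $t<m_0$ is handled by the trivial bound $d_W\le C_2$ and absorbed into $C''_{\bo{\mc L}}:=\max\{C'_{\bo{\mc L}},\,C_2\lambda_{\bo{\mc L}}^{-m_0}\}$. Your restart mechanism (applying the time-$n_0$ estimate to $F_*^{t-n_0}\mu_j$) would also work \emph{provided} you restart only once $t-n_0\ge m_0$, so that the horizontal density has already relaxed; as written, your restart at $t=n_0+1$ inherits the same unbounded $C_3\bar\delta$ term.
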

\begin{proof}
Pick any two probability measures $\mu_1,\mu_2\in\Gamma_{\ell_0}$. Then for $n,m\in\N$
\begin{align*}
d_W({\bo \Delta} F_*^n(F_*^m\mu_1),{\bo \Delta} F_*^n(F_*^m\mu_2))&\le d_W({\bo {\mc L}}^n{\bo \Delta}F_*^m\mu_1,{\bo {\mc L}}^n{\bo \Delta}F_*^m\mu_2)+\\
&+d_W({\bo {\mc L}}^n{\bo \Delta}F_*^m\mu_1,{\bo \Delta} F_*^nF_*^m\mu_1)+\\
&+d_W({\bo {\mc L}}^n{\bo \Delta}F_*^m\mu_2,{\bo \Delta} F_*^nF_*^m\mu_2)\\
&\le  C_2d_{TV}({\bo {\mc L}}^n{\bo \Delta}F_*^m\mu_1,{\bo {\mc L}}^n{\bo \Delta}F_*^m\mu_2)+\\
&\quad\quad+2K_\#L^{n+1}(\sigma^{-1}+C_3(\sigma)\|\rho_m-\rho_g\|_{\infty})\\
&\le C_2C_{{\bo {\mc L}}}\lambda_{{\bo {\mc L}}}^n+2K_\#L^{n+1}(\sigma^{-1}+C_3(\sigma)\|\rho_m-\rho_g\|_{\infty}).
\end{align*}
where we used triangular inequality, Lemma \ref{Lem:dwlessdtv} (recall that $C_2$ is the diameter of $\T^{m_2}$), and Proposition \ref{Prop:ApproximDisitnMathcalL}.

For every $\epsilon>0$, pick $n_0\in\N$, $m_0\in\N$, and $\sigma_0$ large enough so that  $C_2C_{{\bo {\mc L}}}\lambda_{{\bo {\mc L}}}^{n_0}\le \epsilon/2$, and  
\[
2K_\#L^{n+1}(\sigma^{-1}+C_3(\sigma)\|\rho_{m_0}-\rho_g\|_{\infty})\le \frac{\epsilon}{2}
\]
Notice that $m_0$ is a transient one waits for the horizontal marginal to get sufficiently close  to $\nu_g$ while $n_0$ is the time one waits for $\bo{\mc L}$ to contract by the desired amount.

Calling  $C_{\bo{\mc L} }':=C_2C_{\bo{\mc L}}\lambda_{\bo{\mc L}}^{-m_0}$ 
\[
d_W({\bo \Delta} F_*^{n+m_0}\mu_1,{\bo \Delta} F_*^{n+m_0}\mu_2)\le C_2C_{{\bo {\mc L}}}\lambda_{{\bo {\mc L}}}^{n_0}+\epsilon/2\le C'_{{\bo {\mc L}}}\lambda_{{\bo {\mc L}}}^{n+m_0}+\epsilon.
\]
 for every $n$, in fact if
$n\le n_0$
\[
d_W({\bo \Delta} F_*^{n+m_0}\mu_1,{\bo \Delta} F_*^{n+m_0}\mu_2)\le C'_{{\bo {\mc L}}}\lambda_{{\bo {\mc L}}}^n+\epsilon/2
\]
and if $n\ge n_0$
\[
d_W({\bo \Delta} F_*^{n+m_0}\mu_1,{\bo \Delta} F_*^{n+m_0}\mu_2)\le C'_{{\bo {\mc L}}}\lambda_{{\bo {\mc L}}}^{n_0}+\epsilon/2\le C'_{{\bo {\mc L}}}\lambda_{{\bo {\mc L}}}^{n}+\epsilon.
\]

Recall that 
\[
\Pi F_*^{n+m_0}\mu_j=\sum_{i=1}^d{\bar\rho_i}({\bo \Delta} F_*^{n+m_0}\mu_j)_i
\]
and since the above is a convex combination, using Lemma \ref{Lem:ConvexComb}
\[
d_W(\Pi F_*^{n+m_0}\mu_1,\Pi F_*^{n+m_0}\mu_2) \le d_W ({\bo \Delta} F_*^{n+m_0}\mu_1,{\bo \Delta} F_*^{n+m_0}\mu_2)\le  C'_{{\bo {\mc L}}}\lambda_{{\bo {\mc L}}}^{n+m_0}+\epsilon
\]
which proves point i) with $t\ge m_0$. If $t\le m_0$, by the definition of $d_W$
\[
d_W(\Pi F_*^{m}\mu_1,\Pi F_*^{m}\mu_2)\le C_2
\]
 the diameter of $\T^{m_2}$. Therefore, picking 
\[
C''_{\bo{\mc L}}:=\max \left\{C'_{\bo{\mc L}}, \,C_2\lambda_{\bo{\mc L}}^{-m_0}\right\}
\] 
we get
\[
d_W(\Pi F_*^{t}\mu_1,\Pi F_*^{t}\mu_2) \le C''_{\bo{\mc L}}\lambda_{\bo{\mc L}}^t+\epsilon
\]
for all $t\in\N$ which concludes the proof of point i).

To prove point ii), recall that $\bo\eta_0\in(\mc M_1(\T^{m_2}))^d$ is fixed by $\bo{\mc L}$. Now
\begin{align*}
d_W({\bo \Delta} F_*^{n+m_0}\mu,\bo\eta_0)&\le d_W({\bo \Delta} F_*^{n+m_0}\mu, \bo{\mc L}^n\bo\Delta F_*^{m_0} \mu)+d_W(\bo{\mc L}^n\bo\Delta F_*^{m_0} \mu,\bo{\mc L}^n\bo\eta_0)\\
&\le d_W({\bo \Delta} F_*^{n+m_0}\mu, \bo{\mc L}^n\bo\Delta F_*^{m_0} \mu)+C_2d_{TV}(\bo{\mc L}^n\bo\Delta F_*^{m_0} \mu,\bo{\mc L}^n\bo \eta_0)\\
&\le  K_\#L^{n+1}(\sigma^{-1}+C_3(\sigma)\|\rho_m-\rho_g\|_{\infty})+C_2C_{\bo{\mc L}}\lambda_{\bo{\mc L}}^n.
\end{align*} 
In a way completely analogous to the proof of point i) one can show that for every $\epsilon>0$ there are $\sigma_0$ sufficiently large and $C_{\bo{\mc L}}''>0$  such that for $\sigma>\sigma_0$
\[
d_W({\bo \Delta} F_*^{t}\mu,\bo\eta_0)\le C''_{\bo{\mc L}}\lambda_{\bo{\mc L}}^{t}+\epsilon.
\]
By definition of $d_W$, the above means that $d_W(({\bo \Delta} F_*^t\mu)_i,(\bo\eta_0)_i)\le C_{\bo{\mc L}}''\lambda_{\bo{\mc L}}^t+\epsilon$ for all $i$, which implies that 
\[
d_W\left(\sum_i{\bar\rho_i}\,({\bo \Delta} F_*^t\mu)_i,\sum_i{\bar\rho_i}\,(\bo \eta_0)_i\right)\le C''_{\bo{\mc L}}\lambda_{\bo{\mc L}}^t+\epsilon.
\]
Since $\Pi F_*^n\mu=\sum_i{\bar\rho_i}\,({\bo \Delta} F_*^n\mu)_i$, the statement follows.
\end{proof}

\begin{proof}[Proof of Theorem \ref{Thm:AppDecayofCorr}]  With all the work above done, the proof of the theorem is almost identical to the case without  distortion. The only difference is that instead of $\eta_0$ in the proof of the case without distortion, one has to substitute $\sum_i{\bar\rho_i}\,(\bo \eta_0)_i$, and apply Proposition \ref{Prop:KantDistanceDistCase} in place of Proposition \ref{Prop:ContractionMarginalWassersteinDist}.

\end{proof}

\subsection{ Fixed point for ${ {\mc L}}$ and fixed point for $\mc P$.} \label{sec:difffixedpooint} Point ii) of Proposition \ref{Prop:KantDistanceDistCase} shows  that if $\sigma$ is sufficiently large, then  the vertical marginal of $F^n_* \mu$ becomes close to $\bar \eta=\sum_{i=1}^d{\bar\rho_i}(\bo \eta_0)_i$.   The purpose of this section is to remark that, in general, $\bar \eta$ is different (and possibly quite far) from $\eta_0$, the stationary measure of $\mc P$. We prove this fact  in an indirect way by showing that the unique fixed point of $\mc P:=\int_{\T}d\omega \rho_g(\omega)f_{\omega*}$, $\eta_0$, and the unique fixed point of  $\mc P':=\int_{\T}d\omega\rho_{g^{k-1}}(\omega)(f_{g^{k-1}(\omega)}\circ...\circ f_\omega)_*$, that we will call $\eta_0'$,  can be in general very different for some $k>1$. If this is the case, $\Pi F_*^{nk}\Leb_{\T^{m_1\times\T^{m_2}}}$ cannot become close to both $\eta_0$ and $\eta_0'$, and since $\mc P'$ is the random counterpart of $F^k$,  it implies that $\sum_{i=1}^d{\bar\rho_i}(\bo \eta_0)_i$ can be far from the fixed points of $\mc P$ or/and $\mc P'$.  At the end of the section we also give numerical evidence that $\bar \eta$ can be different from $\eta_0$ when the map $g$ has nonzero distortion.

 For simplicity of exposition, we are going to present an example that does not satisfy the smoothness requirements of Theorem \ref{Thm:AppDecayofCorr}. However, with a small modification on a set of arbitrarily small measure, the system can be made as smooth as one likes and all  the considerations below carry over to the smoothed version.
 
 First of all, we define the map {$g:= g_{M,\kappa}:\T\rightarrow \T$ where $M\in\N$ and $\kappa \in (0,1)$ are parameters}. We identify $\T$ with  $[0,1]$ in the usual way  and divide $[0,1]$ into $2M$ intervals of equal length
 \[
 I_j:=\left[\frac{j-1}{2M},\frac{j}{2M}\right].
 \]
 Let $\kappa'=1-\kappa$. Define for $0\le j\le M-1$ 
 \begin{equation}
 g_{M,\kappa}(\omega):=\left\{
 \begin{array}{ll}
 \frac{M}{\kappa}\omega-\frac{j}{2\kappa}& \omega\in [j/2M,(j+\kappa)/2M]\\
\frac{M}{\kappa'}\omega-\frac{j+\kappa-\kappa'}{2\kappa'} & \omega\in [(j+\kappa)/2M,(j+1)/2M]
 \end{array}
 \right.
 \end{equation}
 and for $M\le j\le 2M-1$ 
\begin{equation}
 g_{M,\kappa}(\omega):=\left\{
 \begin{array}{ll}
 \frac{M}{\kappa'}\omega-\frac{j}{2\kappa'}& \omega\in [j/2M,(j+\kappa')/2M]\\
\frac{M}{\kappa}\omega-\frac{j+\kappa'-\kappa}{2\kappa}  & \omega\in [(j+\kappa')/2M,(j+1)/2M]
 \end{array}
 \right.
 \end{equation}
The graph of $g_{5,0.99}$ is presented in Figure \ref{Fig:Basemap}.

\begin{figure}
\begin{center}
\includegraphics[scale=0.5]{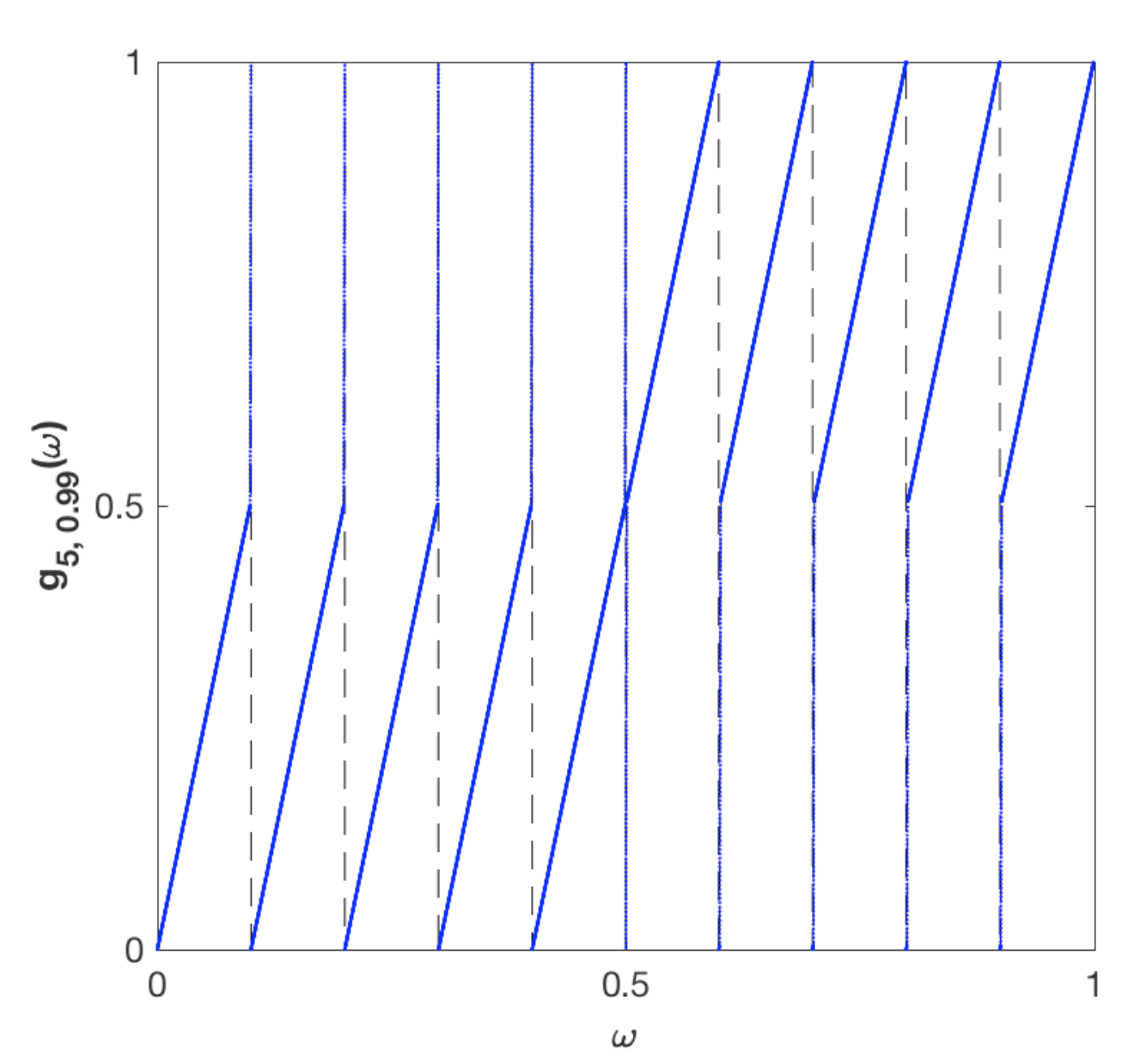}
\caption{Graph of $g_{5,0.99}$}
\label{Fig:Basemap}
\end{center}
\end{figure}

It is easy to verify that $g_{M,\kappa}$ is piecewise affine, uniformly expanding, and keeps the Lebesgue measure invariant. Also, the minimal expansion of $g_{M,\kappa}$ can be made arbitrarily large by letting $M\rightarrow\infty$. 

Notice that for $1\le j\le M$, 
\[
g_{M,\kappa}([j/2M,(j+\kappa)/2M])=[0,1/2] \,\,\mbox{ and }\,\, g_{M,\kappa}([(j+\kappa)/2M,(j+1)/2M])=[1/2,1]
\]
 while for $M+1\le j\le 2M$ 
 \[
 g_{M,\kappa}([j/2M,(j+\kappa')/2M])=[0,1/2]\,\,\mbox{ and }\,\,g_{M,\kappa}([(j+\kappa')/2M,(j+1)/2M])=[1/2,1].
 \] 
Picking $\kappa \approx 1$, most of the points in the interval $[0,1/2]$ are mapped back to $[0,1/2]$, and also most of the points of $[1/2,1]$ are  mapped back to $[1/2,1]$. More precisely, defining $V_1:=[0,1/2]$, $V_2:=[1/2,1]$ and
\[
V_{i,n}:=\{\omega\in V_i:\,g_{M,\kappa}^k(\omega)\in V_i\mbox{ for }0\le k\le n-1\};
\]
$V_{i,n}\subset V_i$ is such that, for any $n\in\N$, 
\begin{equation}\label{Eq:SizeofVi}
|V_{i,n}|\rightarrow 1/2\,\, \mbox{ as }\,\, \kappa\rightarrow 1.
\end{equation}

Fix $\epsilon>0$ a small number. Pick $\phi:\T\rightarrow\T$ a $N-S$ diffeomorphism such that $|\phi(x)-x|\le \epsilon$\footnote{A North-South (NS) diffeomorphism is a diffeomorphism with exactly two fixed points: one attracting, the South Pole (S), and one repelling, the North Pole (N), such that  for any $x\neq N$, $\phi^n(x)\rightarrow S$. Furthermore, $\phi'(N)>1$ and $\phi'(S)<1$ so that the two fixed points are hyperbolic.}, and define
 \begin{equation}\label{Eq:fibermaps}
 f_\omega(x)=\left\{
 \begin{array}{ll}
2\omega & \omega\in I_1\\
\phi+ a \omega &\omega\in I_2.
 \end{array}
 \right.
 \end{equation}
One can check that $\mc P:=\int_{\T}d\omega f_{\omega*}$ maps a small closed ball around $\Leb_\T$ into itself, with the diameter of the ball going to zero (in Total Variation distance) when $a\rightarrow 0$.  This implies that the unique fixed point of $\mc P$ is close to $\Leb_\T$. 

%To this end, define
% \[
% \mc P_1:=2\int_{I_1}d\omega f_{\omega*}\quad\mbox{ and }\quad\mc P_2:=2\int_{I_2}d\omega f_{\omega*}
% \]
% and notice that $\mc P=\frac{1}{2}\mc P_1+\frac 12\mc P_2$. From these definitions, it immediately follows that for every $x\in\T$, $\mc P_1{\bo \Delta}_x=\eta'$, where $\eta'$ is the uniform probability measure on $[1/2,1/2(1+\epsilon)]$, and
% \[
%\frac{\mc P_2{\bo \Delta}_x}{d\Leb_\T}(t)=\xi(t-x/M).
% \]
% where $\xi$ is the uniform density on $[\epsilon/2,\epsilon]$. It is clear that for any  measure $\eta$, $\mc P_2\eta$ has support contained in $[0,1/M+\epsilon]$. If $\eta_0$ is the stationary measure for $\mc P$, \[\eta_0=\frac12 \mc P_1\eta_0+\frac12\mc P_2\eta_0 =\frac12\Leb_\T+\frac12\mc P_2\eta_0\] so, we have that $\eta_0([0,\epsilon+1/M])=1/2$ and $\eta_0([1/4,3/4])=1/2$.

{To ease the notation, from now on we write $g$ in place of  $g_{M,\kappa}$.} Let's look at $f_{\omega}^{n-1}:=f_{g^{n-1}(\omega)}\circ...\circ f_\omega$ and study $\mc P':=\int_{\T}d\omega (f^{n-1}_{\omega})_*$. 
Fix $\Delta>0$ small. For any $\bar x_0\in\T$ and $(\omega_k)_{k=0}^{n-1}$ with $\omega_k\in V_2$, consider $(\bar x_k)_{k=0}^{n-1}$ with $\bar x_{k+1}=\phi(\bar x_k)+a\omega_k$. Pick $n\in\N$ large and $a>0$ small so that for any $\bar x_0\in[N-\Delta,N+\Delta]^c$ and $(\omega_k)_{k=0}^{n-1}$ as above,  $\bar x_{n-1}\in[S-\Delta,S+\Delta]$. One can find $\kappa$ close enough to one so that $|V_{1,n}|=|V_{2,n}|=0.49$, which implies  
\begin{align*}
\mc P'\eta&=\int_{V_{1,n}}d\omega (f_{\omega}^{n-1})_*\eta+\int_{V_{2,n}}d\omega (f_{\omega}^{n-1})_*\eta+\int_{(V_{1,n}\cup V_{2,n})^c}d\omega (f_{\omega}^{n-1})_*\eta\\
&= 0.49\Leb_\T+\int_{V_{2,n}}d\omega (f_{\omega}^{n-1})_*\eta+\int_{(V_{1,n}\cup V_{2,n})^c}d\omega (f_{\omega}^{n-1})_*\eta.
\end{align*}
 Given the expression of $\mc P'$, if $\eta_0'$ is such that $\mc P'\eta_0'=\eta_0'$ then, $\eta_0'= 0.49 \Leb+0.51\eta_1$, where $\eta_1$ is some probability measure. This implies that 
\begin{align*}
\mc P'\eta_0([S-\Delta,S+\Delta])&=(0.49\mc P'\Leb+0.51\mc P'\eta_1)([S-\Delta,S+\Delta])\\
&=0.49\int_{V_{2,n}}d\omega (f_{\omega}^{n-1})_*\Leb([S-\Delta,S+\Delta])+\\
&\quad\quad\quad\quad+(1-0.49^2)\eta_2([S-\Delta,S+\Delta])\\
&>0.49^2(1-2\Delta)
\end{align*}
where $\eta_2$ above is some probability measure.  Since $\Delta>0$ is arbitrary, $\eta_0'([S-\Delta,S+\Delta])\approx 1/4$ while $\eta_0([S-\Delta,S+\Delta])\approx 2\Delta$ which makes $\eta_0$ and $\eta_0'$ two  very far apart measures with respect to most metrics (e.g. $d_{TV}$, $d_W$,...).

\begin{figure}[h!]
\begin{center}
\includegraphics[scale=0.6]{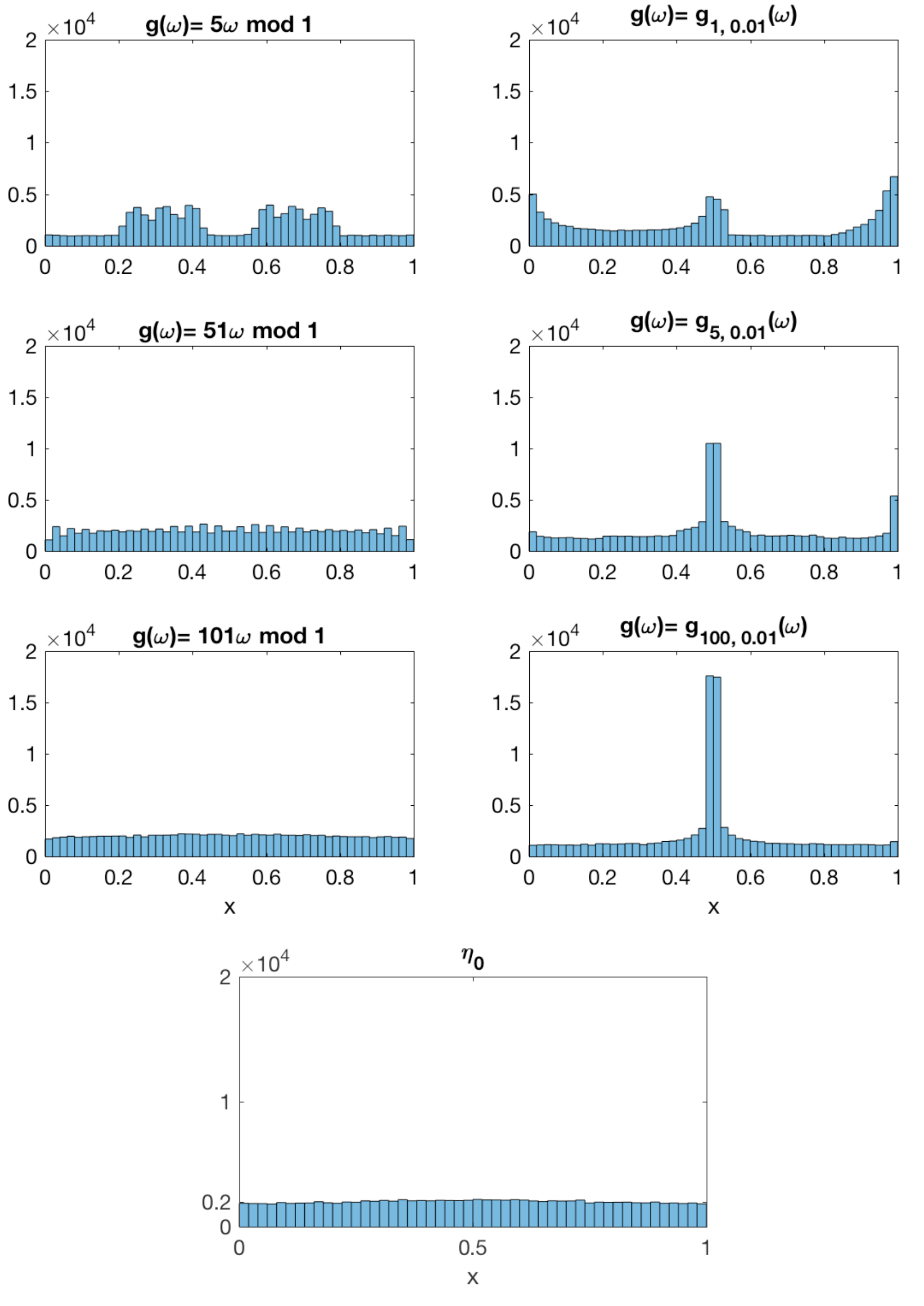}
\end{center}
\caption{For different base maps $g$, we consider $10^4$ initial conditions $\{(\omega_k,x_k)\}_{k=1}^{10^4}$ sampled randomly and uniformly on $[0,1]\times [0,1]$, let $F(\omega,x)=(g(\omega),f(\omega,x))$ act for 100 time steps to obtain $\{F^{100}(\omega_k,x_k)\}_{k=1}^{10^4}$, take the vertical $x$-coordinates of these points, and plot them on a histogram. The different $g$ maps used are indicated above the histograms. The fiber maps are the same throughout and as in \eqref{Eq:fibermaps} with $\phi(x)=x-0.01\sin(2\pi x)$ and $a=0.001$. 
The last panel shows a numerical approximation for $\eta_0$ obtained as in the deterministic case by applying $F$ to $\{(\omega_k,x_k)\}_{k=1}^{10^4}$ but where, instead of having $g$ in the base, we sampled the $\omega$-coordinate at random independently (both w.r.t. time and  initial conditions) and uniformly on $[0,1]$ using the random number generator built in the programming language.}
\label{Fig:Instograms}
\end{figure}

{In Figure \ref{Fig:Instograms} below we compare numerical simulations of the distribution of mass on the vertical marginal after several iterations of  skew-products $F$ with different base maps $g$. For each such map, we consider several initial conditions sampled randomly and uniformly on $[0,1]\times [0,1]$, let $F$ act for a while on these points, then take their vertical coordinates, and plot them on a histogram.  When the expansion in the base is large, we expect the distribution given by the histogram to be close to $\bar \eta$.
We compare the case of base maps with no distortion,  $g(\omega)=\sigma\omega$ mod 1, against base maps $g_{M,\kappa}$ defined above.  We also simulate numerically $\eta_0$, the stationary measure for $\mc P$ (as given by the random number generator of the programme). The fiber maps $f_\omega$ are of the kind described in \eqref{Eq:fibermaps}. 

In the case without distortion, when the minimal expansion in the base increases, we can see that the simulated $\bar \eta$  becomes very close to $\eta_0$  (as per Propostion \ref{Prop:ContractionMarginalWassersteinDist} point ii)), while in the case with distortion, $\bar \eta$ and $\eta_0$ are different.}

\section{Generalizations and limitations }\label{Sec:Generalizations}

In this section we discuss a few generalizations of the results and techniques presented above, and also some of the limitations. Before proceeding with the generalizations, we would like to stress that the goal of this paper was not to give a result in its greatest generality possible, but rather to present some techniques that we believe can be applied (with different levels of additional effort) to various setups. 

\subsection{Regularity assumptions on $g$.}\label{Sec:GenRegg} The regularity assumptions on the map $g$ can be revised to fit other situations.  For example, $\Omega$ could be a compact manifold with border such as $\Omega=[0,1]^{m_1}$ with $g$  piecewise $C^2$ with onto branches. By this we mean that there are open sets $\{I_i\}_{i=1}^d$ partitioning $\Omega$ modulo sets of measure zero, and such that $g|_{I_i}:I_i\rightarrow (0,1)^{m_1}$ is a $C^2$ uniformly expanding diffeomorphism with bounded distortion. 

For the system in Section \ref{Sec:NoDistortion}, i.e. when $m_1=1$ and no distortion, this corresponds to considering  maps $g:[0,1]\rightarrow [0,1]$ for which there are $n\in\N$ and $0=:a_0<a_1<...<a_n<a_{n+1}:=1$ such that  $g|_{(a_i,a_{i+1})}$ is $C^2$ and onto $(0,1)$. It is easy to check that all the proof of statements in Section \ref{Sec:NoDistortion} hold, \emph{mutatis mutandis}, for maps $g$  satisfying these assumptions. 

Also the assumption that $g$ must be $C^2$ (or piecewise $C^2$) is not necessary, and can be substituted by $g$ being $C^{1+\alpha}$ (or piecewise $C^{1+\alpha}$), meaning that $g$ is once differentiable and with $\alpha-$H\"older differential (or same property, but piecewise).

\subsection{Robustness under  conjugacy} 
 Consider a map $\hat g:\Omega \rightarrow \Omega$ and assume that there is an invertible map $h:\Omega \rightarrow \T^{m_1}$ which is measurable and with measurable inverse, such that $\hat g:=h^{-1}\circ g\circ h$ for a map $g:\T^{m_1}\rightarrow \T^{m_1}$. Consider $\hat f:\Omega\times\T^{m_2}\rightarrow \T^{m_2}$ and the skew-product system $\hat F:\Omega\times \T^{m_2}\rightarrow \Omega\times\T^{m_2}$
 \[
 \hat F(\omega, x)=(\hat g(\omega),\hat f(\omega,x) ).
 \]
 Then, if one can show that the skew-system $F:\T^{m_1}\times \T^{m_2}\rightarrow \T^{m_1}\times \T^{m_2}$
 \[
 F(\omega,x)=(g(\omega), f(\omega,x))
 \]
 with $f(\omega,x):=\hat f(h^{-1}\omega,x)$, satisfies an approximate decay of correlations (as in Theorem \ref{Thm:AppDecayofCorr}), then so does $\hat F$. This is made precise in the following proposition.
\begin{proposition}
Suppose $\hat F:\Omega\times \T^{m_2}\rightarrow \Omega\times\T^{m_2}$ and $F:\T^{m_1}\times \T^{m_2}\rightarrow \T^{m_1}\times \T^{m_2}$ are as above, and assume that 
for some $\epsilon>0$, $\eta$ a probability measure, $\tilde C>0$ and $\tilde \lambda\in (0,1)$  the conclusion of Theorem \ref{Thm:AppDecayofCorr} holds for $F$.  
%\[
%\left|\int_{\T^{m_1}\times\T^{m_2}}\phi( \pi_2F^n(\omega,x)) \psi(x)dxd\omega-\int_{\T^{m_2}} \phi(x)d\eta(x)\int_{\T^{m_2}}\psi(x)dx\right|\leq C_{\phi,\psi}(\tilde C \tilde{\lambda}^n+ \epsilon)
%\]
%for all $\psi \in L^1(\T^{m_2};\R)$ and $\phi\in\Lip(\T^{m_2};\R)$, where $C_{\phi,\psi}>0$ depends on $\phi$ and $\psi$.
Then, defining $\nu:=(h^{-1})_*\Leb_{\T^{m_1}}$
\[
\left|\int_{\Omega\times\T^{m_2}} \phi( \pi_2\hat F^n(\omega,x)) \psi(x)dxd\nu(\omega)-\int_{\T^{m_2}} \phi(x)d\eta(x)\int_{\T^{m_2}}\psi(x)dx\right|\leq C_{\phi,\psi}(\tilde C \tilde{\lambda}^n+ \epsilon) 
\]
for all $\psi\in L^1(\T^{m_2};\R)$ and $\phi\in\Lip(\T^{m_2};\R)$.
\end{proposition}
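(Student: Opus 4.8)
The plan is to realise $\hat F$ as measurably conjugate to $F$ via a map that acts by $h$ on the base and by the identity on the fibre, transport the correlation integral for $\hat F$ to the one for $F$ by a change of variables, and then invoke the hypothesis (the conclusion of Theorem~\ref{Thm:AppDecayofCorr} for $F$).

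Concretely, I would first introduce $H:\Omega\times\T^{m_2}\rightarrow\T^{m_1}\times\T^{m_2}$ defined by $H(\omega,x):=(h(\omega),x)$; since $h$ is an invertible measurable map with measurable inverse, $H$ is a measurable bijection with measurable inverse $H^{-1}(\omega',x)=(h^{-1}(\omega'),x)$. From $\hat g=h^{-1}\circ g\circ h$ one has $h\circ\hat g=g\circ h$, and by construction $f(\omega,x)=\hat f(h^{-1}(\omega),x)$, i.e. $\hat f(\omega,x)=f(h(\omega),x)$; substituting these into the definitions shows $H\circ\hat F=F\circ H$. Iterating, $H\circ\hat F^{\,n}=F^{n}\circ H$, and since $\pi_2\circ H^{-1}=\pi_2$ this yields the key identity
\[
\pi_2\hat F^{\,n}(\omega,x)=\pi_2\!\left(F^{n}(h(\omega),x)\right),\qquad n\in\N .
\]

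Next I would plug this into the left-hand side of the claimed estimate. As $\phi\in\Lip(\T^{m_2};\R)$ is bounded and $\psi\in L^1(\T^{m_2};\R)$, Fubini's theorem applies; performing the change of variables $\omega'=h(\omega)$ in the outer integral, and using $\nu=(h^{-1})_*\Leb_{\T^{m_1}}$ together with the push-forward change-of-variables formula, gives
\[
\int_{\Omega\times\T^{m_2}}\phi\!\left(\pi_2\hat F^{\,n}(\omega,x)\right)\psi(x)\,dx\,d\nu(\omega)
=\int_{\T^{m_1}\times\T^{m_2}}\phi\!\left(\pi_2 F^{n}(\omega',x)\right)\psi(x)\,d\omega'\,dx .
\]
The right-hand side is precisely the correlation integral for $F$ with respect to $\Leb_{\T^{m_1}\times\T^{m_2}}$ appearing in Theorem~\ref{Thm:AppDecayofCorr}, so the assumed conclusion for $F$ (with its measure $\bar\eta$ here called $\eta$) bounds the difference of the two sides by $C_{\phi,\psi}(\tilde C\tilde\lambda^{n}+\epsilon)$, which is exactly the assertion.

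I do not expect any serious obstacle: the argument is a conjugation followed by an elementary change of variables. The only points deserving care are bookkeeping ones --- checking that $H$, $H^{-1}$ and the integrands are measurable (they are, because $\phi$ and $F^n$ are continuous while $h,h^{-1}$ are measurable), that the relevant integrals are absolutely convergent (ensured by $\phi$ bounded and $\psi\in L^1$), and that the push-forward change of variables is applied with the correct orientation, i.e. that $\int_\Omega G(h(\omega))\,d\nu(\omega)=\int_{\T^{m_1}}G(\omega')\,d\omega'$ whenever $\nu=(h^{-1})_*\Leb_{\T^{m_1}}$.
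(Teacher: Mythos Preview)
Your proposal is correct and follows essentially the same approach as the paper: define the conjugacy $H=h\times\id$, use $H\circ\hat F=F\circ H$ (equivalently $\pi_2\hat F^n=\pi_2 F^n\circ H$), and apply the push-forward change of variables to reduce the $\hat F$-integral over $\nu\otimes\Leb$ to the $F$-integral over $\Leb_{\T^{m_1}\times\T^{m_2}}$, then invoke the hypothesis. Your write-up is in fact slightly more explicit than the paper's in verifying the conjugacy and justifying the change of variables.
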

\begin{proof}
Take $\psi\in L^1(\T^{m_2};\R)$ and $\phi\in\Lip(\T^{m_2};\R)$. Define $\nu=(h^{-1})_*\Leb_{\T^{m_1}}$ a probability measure on $\Omega$. Let's call $H:=h \times \id$ which is invertible with inverse $H^{-1}=h^{-1}\times \id$.
\begin{align*}
\int_{\Omega\times\T^{m_2}}\psi\; \phi\circ \pi_2\circ \hat F^n d\nu\otimes \Leb&=\int_{\Omega\times\T^{m_2}}\psi\circ\pi_2\circ H^{-1}\;\phi\circ\pi_2\circ\hat F^n\circ H^{-1} dH_*( \nu\otimes \Leb)\\
&=\int_{\Omega\times\T^{m_2}}\psi\;  \phi \circ\pi_2H\circ\hat F^n\circ H^{-1} d\Leb\\
&=\int_{\Omega\times\T^{m_2}}\psi\;    \phi\circ\pi_2 \circ F^n d\Leb.
\end{align*}

Therefore, from the assumptions, there is $C_{\phi,\psi}>0$ such that 
\begin{align*}
\left|\int_{\Omega\times\T^{m_2}}\psi\; \phi\circ \pi_2\circ \hat F^n d\nu\otimes \Leb-\int_{\T^{m_2}} \phi(x)d\eta(x)\int_{\T^{m_2}}\psi(x)dx\right|\le C_{\phi,\psi}(\tilde C\tilde \lambda^n+\epsilon)
\end{align*}
\end{proof}

As an example, one can use Theorem \ref{Thm:AppDecayofCorr} to prove approximate decay of correlation in case the forcing is driven by a power of the logistic map $\hat g_0(x)=4x(1-x)$. In fact, it is well known $\hat g_0$ is conjugate to the tent map
\[
g_0=\left\{
\begin{array}{ll}
2x & x\in[0,1/2)\\
1-2x& x\in[1/2,1]
\end{array}\right.
\]
via a $C^1$ map $h:[0,1]\rightarrow [0,1]$. %(see e.g. \cite{}). 
Analogously, for any $n\in\N$, also $\hat g:=\hat  g_0^n$ is conjugate to $g:=g_0^n$ via $h$, and $g$ is in the class of maps admitted by the generalization in Section \ref{Sec:GenRegg} for which one can apply Theorem \ref{Thm:AppDecayofCorr}.

\subsection{More or less regular disintegrations} In Definition \ref{Def:HolderDisint} we have given the definition of Lipschitz disintegration $\{\mu_\omega\}_{\omega\in\Omega}$ and later we have shown how, under the hypotheses of Theorem \ref{Thm:AppDecayofCorr}, certain classes of measures with Lipschitz disintegration were kept invariant by the dynamics. Measures having  H\"older disintegration can be defined in a completely analogous way, and they can be used to define classes of invariant measures for example in the case where $f:\Omega\times X\rightarrow X$ is only H\"older and not Lipschitz. 

Analogously, one could think of defining measures having disintegrations of higher regularity, e.g. differentiable for a suitable notion of differentiability for curves in $\mc M_1(X)$, and exploit these classes. 

\subsection{Limitations of the approach}
{A more substantial and also natural step forward from Theorem \ref{Thm:AppDecayofCorr}, would be considering $g$ an invertible uniformly hyperbolic map, like an Anosov diffeomorphism or a map with an Axiom A attractor. Unfortunately it seems hard to extend the techniques in this paper to this case. The main reason is that we need the contraction properties of the inverse branches of $g$: for invertible uniformly hyperbolic systems, some directions are contracted when taking preimages, but others are expanded and this spoils the arguments.  

For the same reason our approach is evidently ill-suited to treat skew-products with quasi-periodic base (see e.g. \cite{dolgopyat2015limit}).}

\appendix
\section{Markov chains and random dynamics}\label{App:MarkChainsAndRandom}

In this section we report some classical results about geometric ergodicity of Markov chains {(\cite{DownMeynTweedie, MeynTweedieBook, MM11, CM15})}, and we relate this to random dynamical systems in discrete time.

\subsection{Markov chains and geometric ergodicity}\label{App:MarkGeomErg}
\begin{definition}
Given a Polish space $ S$, the \emph{state space}, endowed with a countably generated $\sigma-$field $\mc B(S)$, a discrete time Markov process is a sequence of random variables $\{X_t\}_{t\in\N_0}$ defined on a probability space $(\Omega,\Sigma, \mb P)$ such that for all $n\in\N$ 
\[
\mb E_{\mb P}[X_n|X_{n-1},\,...,\, X_0]=\mb E_{\mb P}[X_{n}|\, X_{n-1}].
\]

The Markov process is called \emph{stationary} if $\mb E_{\mb P}[X_{n}|\, X_{n-1}]$ does not depend on $n$, and $P:S\times \mc B(S)\rightarrow \R^+$ is the associated transition kernel satisfying
\[
\mb P\left(X_{n+1}\in A|\, X_{n}=x\right)=P(x,A).
\]
\end{definition}

For every $x\in S$, $P(x,\cdot)$ defines a probability measure with the following meaning: $P(x,A)$ is the probability that $X_{n+1}\in A$ given that at time $n$ one has observed $X_n=x$.

Given a stationary Markov process and $n\in\N$, one can extend the notion of kernel to higher iterates: Define $P^m:S\times \mc B(S)\rightarrow \R^+$ 
\[
P^m(x,A)=\mb P\left(X_{n+m}\in A|\, X_{n}=x\right).
\]
For any $n\in\N$, $P^n$ generates an action on the set of measures on $(S,\mc B(S))$ in the following way. Given $\mu$ a measure on $S$, define
\[
\mc P\mu(A):=\int_{S}P(x, A)d\mu(x).
\]
and
\[
\mc P^n\mu(A):=\int_{S}P^n(x, A)d\mu(x).
\]
Using the properties of transition kernels one can prove that, $\{\mc P^n\}$ satisfies the semi-group property
\[
\mc P^n\circ \mc P^m=\mc P^{n+m}
\]
making $\mc P$ the generator of a semi-group action on probability measures on $(S,\mc B(S))$.

\begin{definition}\label{Def:GeomErg}
A stationary Markov chain is said to be \emph{geometrically ergodic} if there are $C>0$ and $\lambda\in(0,1)$ such that 
\[
d_{TV}(P^n(x_1,\cdot),P^n(x_2,\cdot))\le C\lambda^n,\quad\quad\forall x_1,x_2\in S.
\]
\end{definition}
For a definition of the Total Variation distance $d_{TV}$ see the beginning of Sec. \ref{App:Wasserstein}. From  Definition \ref{Def:GeomErg} follows that if a Markov chain is \emph{geometrically ergodic}, then there is a probability measure $\eta_0$  such that, for every probability measure $\mu$ on $(S,\mc B(S))$,
\[
d_{TV}(\mc P^n\mu,\eta_0)\le C\lambda^n.
\]
The measure $\eta_0$ satisfies $\mc P(\eta_0)=\eta_0$ and is also called a \emph{stationary distribution} or \emph{stationary measure}.

\subsection{Sufficient conditions for geometric ergodicity} In this subsection we give a  sufficient condition that ensures geometric ergodicity of a stationary Markov chain. Weaker conditions working in more general setups are available and involve \emph{petite sets} \cite{DownMeynTweedie} or Lyapunov functions \cite{MM11}. 

\begin{theorem}[{ \cite{MeynTweedieBook}}]\label{Thm:GeometricErgodicity}
Let $\{X_n\}_{n\in\N_0}$ be a stationary Markov chain on $(S,\mc B(S))$ with transition kernel $P:S\times \mc B(S)\rightarrow \R^+$. Assume there is $\nu$ a probability measure, $\epsilon>0$ and $n_0\in\N$ such that 
\[
P^{n_0}(x,\cdot)\ge \epsilon \nu(\cdot),\quad\quad\forall x\in S.
\] 
Then the Markov chain is geometrically ergodic.
\end{theorem}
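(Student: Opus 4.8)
The plan is to run the classical \emph{minorization} (or \emph{splitting}) argument — the same mechanism already exploited in the decomposition $\bo{\mc L}=p\bo{\mc L}_1+\bo{\mc L}_2$ in the proof of Proposition~\ref{Prop:SpecPropMCL}. We may assume $\epsilon<1$ (if $\epsilon=1$ the conclusion is immediate, since then $P^{n_0}(x,\cdot)=\nu(\cdot)$ for all $x$). First I would rewrite the hypothesis $P^{n_0}(x,\cdot)\ge\epsilon\,\nu(\cdot)$ as
\[
P^{n_0}(x,\cdot)=\epsilon\,\nu(\cdot)+(1-\epsilon)\,Q(x,\cdot),\qquad Q(x,\cdot):=\frac{P^{n_0}(x,\cdot)-\epsilon\,\nu(\cdot)}{1-\epsilon},
\]
valid for every $x\in S$. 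The minorization makes $Q(x,\cdot)$ a nonnegative measure, and since $P^{n_0}(x,S)=\nu(S)=1$ it has total mass one; measurability of $x\mapsto Q(x,A)$ is inherited from that of $x\mapsto P^{n_0}(x,A)$. Hence $Q$ is a genuine transition kernel, inducing a Markov operator $\mc Q$ on $\mc M_1(S)$ via $\mc Q\mu(A)=\int_S Q(x,A)\,d\mu(x)$, which (as recalled in the proof of Lemma~\ref{Lem:ContmcLinTV}) is a weak contraction for $d_{TV}$.

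Next, for any $\mu_1,\mu_2\in\mc M_1(S)$ the term $\epsilon\,\nu$ does not depend on the initial measure, so it cancels:
\[
\mc P^{n_0}\mu_1-\mc P^{n_0}\mu_2=(1-\epsilon)\,(\mc Q\mu_1-\mc Q\mu_2),
\]
whence $d_{TV}(\mc P^{n_0}\mu_1,\mc P^{n_0}\mu_2)\le(1-\epsilon)\,d_{TV}(\mu_1,\mu_2)$. I would then iterate: writing $n=k n_0+r$ with $0\le r<n_0$, and using the semigroup property $\mc P^{n}\circ\mc P^{m}=\mc P^{n+m}$ from the appendix together with the fact that $\mc P^{r}$ is itself a $d_{TV}$-weak contraction,
\[
d_{TV}(\mc P^{n}\mu_1,\mc P^{n}\mu_2)\le d_{TV}(\mc P^{k n_0}\mu_1,\mc P^{k n_0}\mu_2)\le(1-\epsilon)^{k}\,d_{TV}(\mu_1,\mu_2).
\]
Setting $\lambda:=(1-\epsilon)^{1/n_0}\in(0,1)$ and $C:=(1-\epsilon)^{-1}$, and observing $n_0 k=n-r$ with $0\le r\le n_0-1$, one gets $(1-\epsilon)^{k}=\lambda^{\,n-r}=\lambda^{-r}\lambda^{n}\le\lambda^{-n_0}\lambda^{n}=C\lambda^{n}$. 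Applying this estimate with $\mu_1=\delta_{x_1}$, $\mu_2=\delta_{x_2}$ and recalling $\mc P^{n}\delta_x=P^{n}(x,\cdot)$ gives $d_{TV}(P^{n}(x_1,\cdot),P^{n}(x_2,\cdot))\le C\lambda^{n}$ for all $x_1,x_2\in S$, which is precisely geometric ergodicity.

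There is no genuine obstacle here — this is a textbook fact — and the only points needing a little care are checking that $Q$ is a bona fide measurable transition kernel and the bookkeeping with $n=k n_0+r$. A fully equivalent alternative would be a coupling proof: build two coupled copies of the chain which, every $n_0$ steps, simultaneously resample their state from $\nu$ with probability $\epsilon$ (and then move together forever after); then $d_{TV}(P^{n}(x_1,\cdot),P^{n}(x_2,\cdot))$ is bounded by the probability of no successful coupling among the $\lfloor n/n_0\rfloor$ independent trials, namely $(1-\epsilon)^{\lfloor n/n_0\rfloor}$, yielding the same conclusion.
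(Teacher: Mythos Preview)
Your argument is correct and is the standard Doeblin minorization/coupling proof. Note, however, that the paper does not supply its own proof of this theorem: it is stated in the appendix with a citation to \cite{MeynTweedieBook} and used as a black box (e.g.\ in Example~\ref{Ex:assexp}). Your write-up is exactly the textbook argument the citation points to, and it even mirrors the mechanics the paper employs internally for $\bo{\mc L}$ in Proposition~\ref{Prop:SpecPropMCL} (splitting off a common piece and contracting on the remainder), so there is nothing to compare or correct.
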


\subsection{Randomly forced systems and Markov chains}\label{App:RandSysMarkChains}

In this section we discuss the difference, in terms of mathematical definitions, between random and deterministic forcing. 

By \emph{random forcing}, we mean that given a probability space  $(\Omega,\nu)$ and  $f:\Omega\times X\rightarrow X$, at the $n-$th iteration we apply the map $f_{\zeta_n}:=f(\zeta_n,\cdot):X\rightarrow X$, where $\{\zeta_n\}_{n\in\N}$ is an i.i.d  sequence of random variables defined on some probability space $(\Xi,\mb P)$ with values in $\Omega$ and distributed according to $\nu$. Fixed $w\in\Xi$, the forward orbits of the system are given by 
\[
O(x):=\left\{f_{\zeta_n(w)}\circ...\circ f_{\zeta_1(w)}\circ f_{\zeta_0(w)}(x): \, n\in\N_0\right\}, \quad\quad\forall x\in X.
\]

An important example of random forcing is given by additive i.i.d. noise: Consider $X=\T^m$, or any other set with an additive structure,  a map $T:X\rightarrow X$
and $\{\zeta_n\}_{n\in\N_0}$ an i.i.d. sequence of random variables with values in $X$ and distributed according to $\nu$, then taking $\Omega=X$ define $f:\Omega\times X\rightarrow X$ as
\[
f(\omega,x):=T(x)+\omega.
\]
Composing at time  $n\in\N$ by $f_{\zeta_n}$ corresponds to considering the recursive equation
\[
\mc X_{n+1}= T(\mc X_n)+\zeta_n,\quad\quad\forall n\in\N_0.
\]
where $\mc X_{n}$ is the state of the system at time $n$. 
What the above means is that, calling $(\Xi,\mb P)$ the underlying probability space where $\{\zeta_n\}_{n\in\N_0}$ are defined,  $\{\mc X_n\}_{n\in\N_0}$ are random variables satisfying
\[
\mb P\left(\mc X_{n+1}\in A|\mc X_{n}=x_n\right)=\mb P(\xi_n\in (A-T(x_n))),
\] 
and thus $\{\mc X_n\}_{n\in\N_0}$ is a Markov chain.
In the above, $T$ denotes the intrinsic dynamics, i.e. the dynamics the system would have if it did not receive any forcing, while $\xi_n$ is the random forcing noise term.

\emph{Deterministic forcing} is also represented as application at time $n\in\N$ of the map $f(\zeta_n,\cdot):X\rightarrow X$. However, in this case the sequence $\{\zeta_n\}_{n\in\N}$ is not required to be independent, but it should satisfy $\zeta_{n+1}=g(\zeta_n)$ for some transformation $g:\Omega\rightarrow \Omega$ that preserves the measure $\nu$. This corresponds also to the general definition of \emph{random dynamical system} usually given in the literature (see \cite{Arnold}).

The difference between random and deterministic forcing is not a stark one. In fact one can  show that random forcing is a particular case of deterministic forcing where $g$ is an appropriate shift map. In fact given $f:\Omega\times X\rightarrow X$ and $\{\zeta_n\}_{n\in\N_0}$ an i.i.d sequence with values in $\Omega$ distributed as $\nu$, we can construct the probability space $(\Omega',\nu')$ with $\Omega':=\Omega^{\N_0}$ and $\nu':=\nu^{\otimes \N_0}$. Now define the sequence of identically distributed random variables $\{\bo \zeta_k\}_{k\in\N_0}$  in  $\Omega'$  with $\bo \zeta_k:=\{\zeta_{n+k}\}_{n\in\N_0}$, and $f':\Omega'\times X\rightarrow X$
\[
f'(\bo \zeta,x):= f((\bo \zeta)_0, x)
\]
where $(\bo \zeta)_0$ denotes the first term of the sequence $\bo\zeta\in\Omega'$.  With this definition we also have 
\[
\bo\zeta_{k+1}=\sigma^{k+1}(\{\zeta_n\}_{n\in\N_0})=\sigma( \bo \zeta_{k-1})
\]
 where $\sigma:\Omega'\rightarrow \Omega'$ is the left shift which is easy to check that keeps the measure $\nu'$ invariant.

\section{Disintegration of measure and Rohlin's theorem}\label{App:RohlinThm}
The following definitions and results are taken from \cite{Simmons}, adapted to the level of generality needed in this paper. %For other accounts and original papers see \cite{QQQRohlin}.
\begin{definition}\label{Def:ConditionalMeas}
Let $(X,\mu)$ be a topological probability space, $Y$ a metric space and $\pi:X\rightarrow Y$ a measurable function. Call $\hat \mu:=\pi_*\mu$.  A system of conditional measures of $\mu$  with respect to $(X,\pi, Y)$ is a collection of measures $\{\mu_y\}_{y\in Y}$ such that 
\begin{itemize}
\item[1)] For $\hat\mu-$almost every $y\in Y$, $\mu_y$ is a probability measure on $\pi^{-1}(y)$.
\item[2)] For every measurable subset $B\subset X$, $y\mapsto \mu_y(B)$ is measurable and
\[
\mu(B)=\int \mu_{\pi^{-1}(y)}(B)d\hat\mu(y).
\]
\end{itemize}
\end{definition}

When $Y$ in the above definition is a measurable partition of  $X$ and $\pi(x)$ is the unique element of the partition to which $x$ belongs, then we also call $\{\mu_y\}_{y\in Y}$ a disintegration of $\mu$. 

\begin{definition}\label{Def:TopCondMeas}
In the same setup of Definition \ref{Def:ConditionalMeas}, the topological conditional measure of $\mu$ with respect to $(X,\pi,y,Y)$ is the weak$*$ limit (if it exists)
\[
\mu_y:=\lim_{\epsilon\rightarrow 0^+}\mu_{\pi^{-1}(B(y,\epsilon))}
\]
where $B(y,\epsilon)$ is the ball centered at $y$ with radius $\epsilon$ with respect to the metric on $Y$ and
\[
\mu_{\pi^{-1}(B(y,\epsilon))}(I)=\frac{\mu(\pi^{-1}(B(y,\epsilon))\cap I)}{\mu(\pi^{-1}(B(y,\epsilon)) }.
\]
\end{definition}

\begin{theorem}[Theorem 2.2 \cite{Simmons}]\label{Thm:DisintSimm}
Let $(X,\mu)$ be a compact metric probability space, let $Y$ be a separable Riemannian manifold. Let $\pi:X\rightarrow Y$ be measurable.Then for $\hat \mu-$almost every $y\in Y$, the topological conditional measure of $\mu$ with respect to $(X,\pi, y, Y)$ exists as in Definition \ref{Def:TopCondMeas}. Furthermore the collection of measures  $\{\mu_y\}_{y\in Y}$ is a system of conditional measures as in Definition \ref{Def:ConditionalMeas}. (If $\mu_y$ does not exist, set $\mu_y=0$).
\end{theorem}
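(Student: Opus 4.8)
The plan is to prove this in two stages. First I would invoke the classical measure-theoretic disintegration theorem to produce \emph{some} system of conditional measures $\{\mu_y'\}_{y\in Y}$, and then I would show that for $\hat\mu$-almost every $y$ the topological conditional measure of Definition~\ref{Def:TopCondMeas}, namely the weak$*$ limit of the ball-averages $\mu_{\pi^{-1}(B(y,\epsilon))}$, exists and coincides with $\mu_y'$. Setting $\mu_y:=\mu_y'$ on the full-measure set where this holds and $\mu_y:=0$ elsewhere then delivers both assertions of the theorem simultaneously.

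For the first stage I would use Rohlin's disintegration theorem: $X$ is compact metric, hence Polish, and $\pi$ is Borel into the separable metric space $Y$, so $\mu$ admits a family $\{\mu_y'\}_{y\in Y}$ with $\mu_y'$ a probability measure carried by $\pi^{-1}(y)$, with $y\mapsto\mu_y'(B)$ measurable for every Borel $B\subseteq X$, and with $\mu(B)=\int_Y\mu_y'(B)\,d\hat\mu(y)$ (this is just a regular conditional probability of $\mu$ given the $\sigma$-field $\pi^{-1}\mc B(Y)$, which exists because $X$ is standard). The consequence I actually need is the identity $\int_{\pi^{-1}(E)}\phi\,d\mu=\int_E\bar\phi\,d\hat\mu$ for every Borel $E\subseteq Y$ and every $\phi\in C(X)$, where $\bar\phi(y):=\int_X\phi\,d\mu_y'\in L^1(\hat\mu)$; it follows from the disintegration formula by a monotone-class argument together with the fact that $\mu_y'$ lives on the fibre $\pi^{-1}(y)$. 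Specialising $E=B(y,\epsilon)$ and recalling the definition of $\mu_{\pi^{-1}(B(y,\epsilon))}$, this reads $\int_X\phi\,d\mu_{\pi^{-1}(B(y,\epsilon))}=\hat\mu(B(y,\epsilon))^{-1}\int_{B(y,\epsilon)}\bar\phi\,d\hat\mu$; that is, the fibrewise ball-average of $\mu$ tested against $\phi$ is exactly the $\hat\mu$-average of the integrable function $\bar\phi$ over the ball $B(y,\epsilon)$.

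The second stage is then a differentiation argument on $Y$. Since $Y$ is a finite-dimensional separable Riemannian manifold it is, in sufficiently small balls, bi-Lipschitz to Euclidean space, so it enjoys the Besicovitch covering property; hence the Lebesgue--Besicovitch differentiation theorem applies to the \emph{arbitrary} finite Borel measure $\hat\mu$ and gives $\hat\mu(B(y,\epsilon))^{-1}\int_{B(y,\epsilon)}\bar\phi\,d\hat\mu\to\bar\phi(y)$ for $\hat\mu$-a.e.\ $y$, hence $\int_X\phi\,d\mu_{\pi^{-1}(B(y,\epsilon))}\to\int_X\phi\,d\mu_y'$ outside a $\phi$-dependent null set. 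To upgrade this to weak$*$ convergence of measures I would fix a countable dense subset $\{\phi_k\}$ of the separable space $C(X)$, take $N\subseteq Y$ to be the union of the corresponding null sets together with the $\hat\mu$-null set $\{y\notin\supp\hat\mu\}$; for $y\notin N$ the probability measures $\mu_{\pi^{-1}(B(y,\epsilon))}$ all live on the compact space $X$ and converge when tested against each $\phi_k$, so by a standard $3\varepsilon$/Riesz argument they converge weak$*$ to a probability measure, which agrees with $\mu_y'$ on every $\phi_k$ and therefore on all of $C(X)$, i.e.\ equals $\mu_y'$.

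The only ingredient that is not soft measure theory is the differentiation step, and that is where I expect the main (and rather mild) obstacle: one cannot simply quote Lebesgue differentiation for Riemannian volume because $\hat\mu$ need not be absolutely continuous or doubling, so the argument genuinely rests on the Besicovitch covering theorem, whose validity on $Y$ comes from finite-dimensionality and local comparison with $\R^{\dim Y}$. Everything else --- existence of the abstract disintegration, the identity rewriting the ball-averages, and the passage from a countable dense family of test functions to weak$*$ convergence --- is routine, and at the end one checks directly from the disintegration formula for $\{\mu_y'\}$ that the identified collection $\{\mu_y\}$ is a system of conditional measures in the sense of Definition~\ref{Def:ConditionalMeas}.
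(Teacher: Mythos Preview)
The paper does not give its own proof of this statement: Theorem~\ref{Thm:DisintSimm} is quoted verbatim as ``Theorem~2.2 \cite{Simmons}'' in Appendix~\ref{App:RohlinThm}, with no argument supplied, so there is nothing in the paper to compare your proposal against.

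That said, your outline is correct and is essentially the standard route to results of this type (and, in particular, the route taken in the cited reference). The two substantive ingredients are exactly the ones you isolate: existence of an abstract disintegration $\{\mu_y'\}$ via regular conditional probabilities on the standard Borel space $X$, and the Lebesgue--Besicovitch differentiation theorem on $Y$ applied to the function $\bar\phi(y)=\int\phi\,d\mu_y'$ with respect to the (possibly singular) measure $\hat\mu$. Your observation that Besicovitch, rather than ordinary Lebesgue differentiation, is needed---because $\hat\mu$ may fail to be absolutely continuous or doubling---is the crucial point, and your justification (finite-dimensional Riemannian manifolds are locally bi-Lipschitz to Euclidean space, hence satisfy the Besicovitch covering property) is correct. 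The passage from a countable dense family $\{\phi_k\}\subset C(X)$ to full weak$*$ convergence via the $3\varepsilon$ argument is routine, as you say.
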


\section{Wasserstein distance: some computations}\label{App:Wasserstein}
Consider a compact metric space $(Y,d)$. Then the Kantorovich-Wasserstein between $\mu_1,\mu_2\in\mc M_1(Y)$ is defined as 
\[
d_W(\mu_1,\mu_2):=\sup_{\gamma\in\mc C(\mu_1,\mu_2)}\int_{Y\times Y}d(s,s')d\gamma(s,s')
\] 
where $\mc C(\mu_1,\mu_2)$ is the set of all couplings between $\mu_1$ and $\mu_2$. If we consider instead of the metric $d$ the discrete metric  $d_{dis}$ defined as 
\[
d_{dis}(s,s')=\left\{
\begin{array}{ll}
1&s=s'\\
0&s\neq s'
\end{array}\right.
\]
We have that 
\[
d_{TV}(\mu_1,\mu_2):=\sup_{\gamma\in\mc C(\mu_1,\mu_2)}\int_{Y\times Y}d_{dis}(s,s')d\gamma(s,s').
\]

\begin{lemma}\label{Lem:LipcConstPushForw}
Let $(Y,d)$ be a metric space, $T:Y\rightarrow Y$ a Lipschitz transformation with Lipschitz constant $\Lip(T)$. Then for any $\xi_1,\xi_2\in\mc M_1(\T)$
\[
d_W(T_*\xi_1,T_*\xi_2)\le \Lip(T)d_W(\xi_1,\xi_2).
\]
\end{lemma}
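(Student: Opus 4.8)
The plan is to prove the inequality by transporting couplings through $T$, using the coupling definition of $d_W$ recalled at the beginning of this appendix (equivalently, \eqref{Eq:KantWassDist}). Throughout, note that a Lipschitz map is Borel measurable, so all the pushforwards appearing below are well defined; and if $\Lip(T)=0$ then $T$ is constant, $T_*\xi_1=T_*\xi_2$, and both sides of the asserted inequality vanish, so we may assume $\Lip(T)>0$.

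First I would observe that the product map $T\times T\colon Y\times Y\to Y\times Y$ sends couplings of $\xi_1,\xi_2$ to couplings of $T_*\xi_1,T_*\xi_2$. Indeed, if $\gamma\in\mc C(\xi_1,\xi_2)$ then for every measurable $A\subseteq Y$ one has
\[
(T\times T)_*\gamma\,(A\times Y)=\gamma\big(T^{-1}(A)\times Y\big)=\xi_1\big(T^{-1}(A)\big)=T_*\xi_1(A),
\]
and symmetrically $(T\times T)_*\gamma\,(Y\times A)=T_*\xi_2(A)$, so $(T\times T)_*\gamma\in\mc C(T_*\xi_1,T_*\xi_2)$. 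Next, combining the change-of-variables formula for pushforwards with the pointwise estimate $d(T(s),T(s'))\le\Lip(T)\,d(s,s')$, I would write, for any $\gamma\in\mc C(\xi_1,\xi_2)$,
\[
\int_{Y\times Y} d\;\diff\big((T\times T)_*\gamma\big)=\int_{Y\times Y} d\big(T(s),T(s')\big)\,\diff\gamma(s,s')\le \Lip(T)\int_{Y\times Y} d(s,s')\,\diff\gamma(s,s').
\]
Since $(T\times T)_*\gamma$ is a coupling of $T_*\xi_1$ and $T_*\xi_2$, the left-hand side is a competitor in (an upper bound for) $d_W(T_*\xi_1,T_*\xi_2)$; taking the infimum over $\gamma\in\mc C(\xi_1,\xi_2)$ on the right-hand side then yields $d_W(T_*\xi_1,T_*\xi_2)\le\Lip(T)\,d_W(\xi_1,\xi_2)$.

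There is no real obstacle in this argument — it is a one-line transport of an optimal (or near-optimal) coupling — the only points worth recording being the degenerate case noted above and the measurability of $T$. As an alternative, the same bound follows from the dual (Kantorovich–Rubinstein) formulation $d_W(\mu,\nu)=\sup_{\phi\in\Lip^1}\int\phi\,\diff(\mu-\nu)$: for $\phi\in\Lip^1$ the function $\phi\circ T$ has Lipschitz constant at most $\Lip(T)$, hence $\Lip(T)^{-1}\phi\circ T\in\Lip^1$, and
\[
\int\phi\,\diff(T_*\xi_1-T_*\xi_2)=\int\phi\circ T\,\diff(\xi_1-\xi_2)=\Lip(T)\int\frac{\phi\circ T}{\Lip(T)}\,\diff(\xi_1-\xi_2)\le\Lip(T)\,d_W(\xi_1,\xi_2);
\]
taking the supremum over $\phi\in\Lip^1$ gives the claim. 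I would present the coupling argument as the main proof, since it uses only the definition stated in this appendix and makes the role of $\Lip(T)$ transparent.
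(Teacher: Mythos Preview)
Your proof is correct. Your primary argument uses the primal (coupling) definition of $d_W$: you push a coupling $\gamma\in\mc C(\xi_1,\xi_2)$ forward by $T\times T$ and use $d(T(s),T(s'))\le\Lip(T)\,d(s,s')$. The paper instead uses the dual (Kantorovich--Rubinstein) formulation, which is precisely the alternative you sketch at the end: one writes $\int\phi\,d(T_*\xi_1-T_*\xi_2)=\int\phi\circ T\,d(\xi_1-\xi_2)$ and uses $\Lip(\phi\circ T)\le\Lip(T)$. Both are standard one-line arguments; the dual version has the small practical advantage here that all other $d_W$ computations in the paper are carried out via $\sup_{\phi\in\Lip^1}$, so it plugs in directly without invoking primal--dual equivalence, while your coupling argument is arguably cleaner in that it uses only the definition \eqref{Eq:KantWassDist} and makes the transport interpretation explicit.
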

\begin{proof}
\begin{align*}
d_W(T_*\xi_1,T_*\xi_2)&=\sup_{\phi\in\Lip^1(Y)}\int_Y\phi(y)d(T_*\xi_1-T^*\xi_2)(y)\\
&=\sup_{\phi\in\Lip^1(Y)}\int_Y\phi\circ Td(\xi_1-\xi_2)(y)\\
&\le \sup_{\phi\in\Lip^1(Y)} \Lip(\phi\circ T) d_W(\xi_1,\xi_2)
\end{align*}
and $\Lip(\phi\circ T)=\Lip(\phi)\Lip(T)$.
\end{proof}
\begin{remark}
The above lemma can be read in the following way: If $T:(Y,d)\rightarrow (Y,d)$ is Lipschitz, then $T_*:(\mc M_1(Y),d_W)\rightarrow (\mc M_1(Y),d_W)$ is Lipschitz with $\Lip(T_*)=\Lip(T)$.
\end{remark}

\begin{lemma}\label{Lem:AvofMeasures}
Consider $(S,\nu)$ a measurable space with $\nu$ a probability measure, and $Y$ a compact metric space. Assume that $\{\mu_s\}_{s\in S}$ is a family of measures belonging to $\mc M_1(Y)$ and that  $\exists \ell>0$ s.t.  $d_W(\mu_s,\mu_{s'})\le \ell$ for for every $s,s'\in S$. Then the measure $\bar\mu\in\mc M_1(Y)$ defined as
\[
\bar\mu(A):=\int_{S}d\nu(s)\mu_s(A)
\]
is such that $d_W(\bar\mu,\mu_s)\le \ell$ for all $s\in S$.
\end{lemma}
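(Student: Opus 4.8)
The plan is to reduce everything to the Kantorovich--Rubinstein dual formula for $d_W$ that is used throughout the paper, namely $d_W(\mu,\nu)=\sup_{\phi\in\Lip^1(Y)}\int_Y\phi\,d(\mu-\nu)$ (the analogue on a general compact metric space $Y$ of the identity recalled before Proposition \ref{Prop:LipNodist} and in Appendix \ref{App:Wasserstein}), and then to push the average $\int_S\cdot\,d\nu(s)$ through the supremum by exploiting that $\phi\mapsto\int_Y\phi\,d\xi$ is linear in $\xi$. The point is that averaging measures can only help: each $\mu_s$ is within $d_W$-distance $\ell$ of a fixed $\mu_{s_0}$, and $\bar\mu$ is a convex combination (integral) of the $\mu_s$, so it stays within distance $\ell$ of $\mu_{s_0}$ as well.

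Concretely, I would fix $s_0\in S$ and $\phi\in\Lip^1(Y)$. Since $Y$ is compact, $\phi$ is bounded and Borel measurable, so approximating $\phi$ by simple functions and using the defining identity $\bar\mu(A)=\int_S\mu_s(A)\,d\nu(s)$ gives
\[
\int_Y\phi\,d\bar\mu=\int_S\Big(\int_Y\phi\,d\mu_s\Big)\,d\nu(s),
\]
and in particular $s\mapsto\int_Y\phi\,d\mu_s$ is $\nu$-measurable. Subtracting $\int_Y\phi\,d\mu_{s_0}=\int_S\big(\int_Y\phi\,d\mu_{s_0}\big)d\nu(s)$ (here using that $\nu$ is a probability measure) yields
\[
\int_Y\phi\,d(\bar\mu-\mu_{s_0})=\int_S\Big(\int_Y\phi\,d(\mu_s-\mu_{s_0})\Big)\,d\nu(s)\le\int_S d_W(\mu_s,\mu_{s_0})\,d\nu(s)\le\ell,
\]
where the first inequality is the dual formula applied for each $s$ and the second is the hypothesis together with $\nu(S)=1$. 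Taking the supremum over $\phi\in\Lip^1(Y)$ gives $d_W(\bar\mu,\mu_{s_0})\le\ell$, which is the claim.

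The only step needing any care is the Fubini/measurability exchange, but this is essentially built into the hypothesis: for $\bar\mu$ to be well defined one already needs $s\mapsto\mu_s(A)$ to be $\nu$-measurable for all Borel $A$, whence $s\mapsto\int_Y\phi\,d\mu_s$ is measurable for every bounded Borel $\phi$ and the exchange is legitimate; so this is not a genuine obstacle. An alternative, coupling-based route would be to pick for each $s$ an optimal coupling $\gamma_s\in\mc C(\mu_s,\mu_{s_0})$ realizing $d_W(\mu_s,\mu_{s_0})$ in \eqref{Eq:KantWassDist}, set $\bar\gamma:=\int_S\gamma_s\,d\nu(s)$ --- which has marginals $\bar\mu$ and $\mu_{s_0}$ --- and estimate $d_W(\bar\mu,\mu_{s_0})\le\int_{Y\times Y}|y-y'|\,d\bar\gamma=\int_S d_W(\mu_s,\mu_{s_0})\,d\nu(s)\le\ell$; this version is morally identical but requires a measurable-selection argument for $s\mapsto\gamma_s$, so I would present the dual-formula proof instead.
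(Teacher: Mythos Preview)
Your proof is correct and follows essentially the same route as the paper: fix a reference point, use the Kantorovich--Rubinstein dual formula, interchange the $S$-integral with the $Y$-integral, bound the integrand pointwise by $d_W(\mu_s,\mu_{s_0})$, and use $\nu(S)=1$. The paper's proof is the same computation written out in four lines, without the extra remarks on measurability or the coupling alternative.
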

\begin{proof}
Pick $s\in  S$
\begin{align*}
d_W(\bar \mu,\mu_s)&=\sup_{\phi\in\Lip^1(Y)}\int_Y\phi(y)d(\bar\mu-\mu_s)(y)\\
&= \sup_{\phi\in\Lip^1(Y)}\int_Y\int_Sd\nu(s')\phi(y)d(\mu_{s'}-\mu_s)(y)\\
&\le \int_Sd\nu(s')\sup_{\phi\in\Lip^1(Y)}\int_Y\phi(y)d(\mu_{s'}-\mu_s)(y)\\
&\le \int_Sd\nu(s')d_W(\mu_s,\mu_{s'})\\
&\le \ell.
\end{align*}
\end{proof}

\begin{lemma}\label{Lem:dwlessdtv}
Let $(Y,d)$ be a bounded metric space and call $\diam (Y)$ its diameter. Then
\[
d_W(\mu_1,\mu_2)\le \diam(Y) d_{TV}(\mu_1,\mu_2).
\]
\end{lemma}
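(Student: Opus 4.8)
The plan is to compare the two distances directly through their coupling representations. Recall from \eqref{Eq:KantWassDist} that $d_W(\mu_1,\mu_2)=\inf_{\gamma\in\mc C(\mu_1,\mu_2)}\int_{Y\times Y}d(s,s')\,d\gamma(s,s')$, and that the total variation distance admits the standard coupling description
\[
d_{TV}(\mu_1,\mu_2)=\inf_{\gamma\in\mc C(\mu_1,\mu_2)}\gamma\big(\{(s,s')\in Y\times Y:\ s\neq s'\}\big)=\inf_{\gamma\in\mc C(\mu_1,\mu_2)}\int_{Y\times Y}\mathbf 1_{\{s\neq s'\}}\,d\gamma(s,s'),
\]
which coincides with $\sup_A|\mu_1(A)-\mu_2(A)|$. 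So both quantities are infima, over the same convex set of couplings, of integrals of two functions that are easy to compare pointwise.

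First I would record the trivial pointwise inequality on $Y\times Y$, namely $d(s,s')\le \diam(Y)\,\mathbf 1_{\{s\neq s'\}}(s,s')$, which holds because the left-hand side vanishes on the diagonal and is at most $\diam(Y)$ off it. Integrating against an arbitrary coupling $\gamma\in\mc C(\mu_1,\mu_2)$ gives $\int_{Y\times Y}d\,d\gamma\le \diam(Y)\int_{Y\times Y}\mathbf 1_{\{s\neq s'\}}\,d\gamma$. Then, for $\epsilon>0$, choose a coupling $\gamma_\epsilon$ with $\int_{Y\times Y}\mathbf 1_{\{s\neq s'\}}\,d\gamma_\epsilon\le d_{TV}(\mu_1,\mu_2)+\epsilon$; this yields
\[
d_W(\mu_1,\mu_2)\le \int_{Y\times Y}d(s,s')\,d\gamma_\epsilon(s,s')\le \diam(Y)\big(d_{TV}(\mu_1,\mu_2)+\epsilon\big),
\]
and letting $\epsilon\to0$ proves the lemma. (In a Polish setting one may instead invoke existence of an optimal TV coupling and omit the $\epsilon$.)

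An equivalent route, which matches the conventions used elsewhere in the paper, is via the Kantorovich--Rubinstein duality $d_W(\mu_1,\mu_2)=\sup_{\phi\in\Lip^1(Y)}\int_Y\phi\,d(\mu_1-\mu_2)$: given $\phi\in\Lip^1(Y)$ one adds a constant (which leaves $\int_Y\phi\,d(\mu_1-\mu_2)$ unchanged, since $\mu_1,\mu_2$ are probability measures) to arrange $0\le\phi\le\diam(Y)$, then writes $\mu_1-\mu_2=\xi^+-\xi^-$ for the Jordan decomposition and bounds $\int_Y\phi\,d(\mu_1-\mu_2)\le\diam(Y)\,\xi^+(Y)=\diam(Y)\,d_{TV}(\mu_1,\mu_2)$. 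There is no genuine obstacle here — the statement is one routine estimate; the only point that deserves a moment's care is matching the normalization of $d_{TV}$ (the $\sup_A|\mu_1(A)-\mu_2(A)|$ convention) against the dual norm defining $d_W$, so that the constant in front of $d_{TV}$ is exactly $\diam(Y)$ and not, say, $\tfrac12\diam(Y)$.
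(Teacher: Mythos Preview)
Your proof is correct and follows essentially the same route as the paper: compare the two coupling representations via the pointwise bound $d(s,s')\le\diam(Y)\,\mathbf{1}_{\{s\neq s'\}}(s,s')$ and pass to the infimum over couplings. The paper's version is a touch more terse---it writes the chain of inequalities directly without the $\epsilon$-approximation step---but the underlying idea is identical.
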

\begin{proof}
\begin{align*}
d_W(\mu_1,\mu_2)&=\sup_{\gamma\in\mc C(\mu_1,\mu_2)}\int_{Y\times Y}d(s,s')d\gamma(s,s')\\
&\le \sup_{\gamma\in\mc C(\mu_1,\mu_2)}\int_{Y\times Y}\diam(Y)d_{dis}(s,s')d\gamma(s,s')\\
&=\mbox{diam}(Y) d_{TV}(\mu_1,\mu_2).
\end{align*}
\end{proof}

\begin{lemma}\label{Lem:ConvexComb}
Assume $\{\mu_i\}_{i=1}^n$ and $\{\mu_i'\}_{i=1}^n$ are probability measures in $\mc M_1(Y)$ and $\{b_i\}_{i=1}^n$, $b_i>0$, are weights with $\sum_{i=1}^nb_i=1$. Then
\[
d_W(\sum_{i=1}^nb_i\mu_i,\sum_{i=1}^nb_i\mu_i')\le \max_i d_W(\mu_i,\mu_i').
\]
\end{lemma}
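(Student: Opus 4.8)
The plan is to exploit the primal (coupling) description of $d_W$ together with the elementary fact that a convex combination of couplings is again a coupling. First I would, for each $i\in\{1,\dots,n\}$, choose an optimal coupling $\gamma_i\in\mc C(\mu_i,\mu_i')$, i.e.\ one realising $\int_{Y\times Y}d(s,s')\,d\gamma_i(s,s')=d_W(\mu_i,\mu_i')$; such a coupling exists because $\mc C(\mu_i,\mu_i')$ is weak$*$ compact and the cost $\gamma\mapsto\int d\,d\gamma$ is weak$*$ lower semicontinuous on a compact metric space $Y$ (alternatively one works with $\epsilon$-optimal couplings and lets $\epsilon\to0$ at the end).

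Next I would set $\gamma:=\sum_{i=1}^n b_i\gamma_i$ and verify that it is a coupling of $\sum_{i=1}^n b_i\mu_i$ and $\sum_{i=1}^n b_i\mu_i'$: by linearity of the push-forward, the marginal of $\gamma$ on the first factor is $\sum_i b_i\mu_i$ and that on the second factor is $\sum_i b_i\mu_i'$, since each $\gamma_i$ already has the correct marginals. As $\gamma$ is admissible,
\[
d_W\!\left(\sum_{i=1}^n b_i\mu_i,\ \sum_{i=1}^n b_i\mu_i'\right)\le \int_{Y\times Y} d(s,s')\,d\gamma(s,s')=\sum_{i=1}^n b_i\int_{Y\times Y} d(s,s')\,d\gamma_i(s,s')=\sum_{i=1}^n b_i\, d_W(\mu_i,\mu_i').
\]
Bounding each $d_W(\mu_i,\mu_i')$ by $\max_j d_W(\mu_j,\mu_j')$ and using $\sum_i b_i=1$ gives the claimed inequality.

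Alternatively, and perhaps more in keeping with the dual formulation recalled in the text, I could argue through Kantorovich--Rubinstein duality $d_W(\mu,\nu)=\sup_{\phi\in\Lip^1(Y)}\int_Y\phi\,d(\mu-\nu)$: for any $\phi\in\Lip^1(Y)$ one has $\int_Y\phi\,d\!\left(\sum_i b_i\mu_i-\sum_i b_i\mu_i'\right)=\sum_i b_i\int_Y\phi\,d(\mu_i-\mu_i')\le\sum_i b_i\, d_W(\mu_i,\mu_i')\le\max_j d_W(\mu_j,\mu_j')$, and taking the supremum over $\phi$ concludes. I do not expect any genuine obstacle in this lemma; the only points worth a sentence are the existence of (near-)optimal couplings and/or the validity of the duality formula on a general compact metric space $Y$, both of which are classical (see e.g.\ \cite{Villani09}).
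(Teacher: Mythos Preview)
Your proposal is correct. The alternative argument you sketch via Kantorovich--Rubinstein duality is exactly the paper's proof: one writes $d_W$ as $\sup_{\phi\in\Lip^1}\int\phi\,d(\cdot-\cdot)$, pulls the sup inside the convex combination, and bounds by the maximum.

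Your primary argument via the coupling (primal) formulation is a genuinely different route. It has the mild advantage of yielding the sharper intermediate bound $d_W\bigl(\sum_i b_i\mu_i,\sum_i b_i\mu_i'\bigr)\le\sum_i b_i\,d_W(\mu_i,\mu_i')$ directly from an explicit admissible coupling, without invoking duality; the slight cost is the sentence on existence of (near-)optimal couplings, which the dual approach avoids entirely. Both arguments are equally short and standard, and either would be perfectly acceptable here.
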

\begin{proof}
\begin{align*}
d_W(\sum_{i=1}^nb_i\mu_i,\sum_{i=1}^nb_i\mu_i')&\le \sup_{\phi\in\Lip^1}\int_Y \phi d\left(\sum_{i=1}^nb_i\mu_i-\sum_{i=1}^nb_i\mu_i'\right)\\
&\le \sum_{i=1}^nb_i\sup_{\phi\in\Lip^1}\int_Y\phi d(\mu_i-\mu_i')\\
&\le \sum_{i=1}^nb_i d_W(\mu_i,\mu_i')\\
&\le \max_id_W(\mu_i,\mu_i').
\end{align*}
\end{proof}

\bibliographystyle{amsalpha} 
\bibliography{bibliografia.bib}

\end{document}